\newtheorem{thm}{Theorem}[section]
\newtheorem*{thm*}{Theorem}
\newtheorem{lem}[thm]{Lemma}
\newtheorem{prop}[thm]{Proposition}
\newtheorem{cor}[thm]{Corollary}
\theoremstyle{definition}
\newtheorem{rem}[thm]{Remark}
\newtheorem{rems}[thm]{Remarks}
\newcommand{\bs}{\texttt{\symbol{92}}}
\DeclareMathOperator{\tri}{trigr}
\DeclareMathOperator{\Diff}{Diff}
\newcommand{\Z}{\mathbb{Z}}
\newcommand{\R}{\mathbb{R}}
\newcommand{\D}{\mathbb{D}}
\newcommand{\A}{\mathbb{A}}
\newcommand{\ot}{\otimes}
\newcommand{\slfrac}[2]{\left.#1\middle/#2\right.}
\DeclareMathOperator{\B}{\mathcal{B}}
\DeclareMathOperator{\Pu}{\mathcal{P}}
\DeclareMathOperator{\Hom}{\mbox{Hom}}
\DeclareMathOperator{\F}{\mathcal{F}}
\DeclareMathOperator{\MCG}{MCG}
\DeclareMathOperator{\cat}{\mathcal{C}}
\DeclareMathOperator{\Fu}{\mathcal{F}}
\numberwithin{equation}{section}
\title[Categorical action of the extended affine type $A$ braid group]{Categorical action of the extended braid group of affine type $A$}
\author{Agn\`es Gadbled, Anne-Laure Thiel and Emmanuel Wagner}
\date{15/04/2015}
\begin{document}

\begin{abstract}
Using a quiver algebra of a cyclic quiver, we construct a faithful categorical action of the extended braid group of affine type $A$ on its bounded homotopy category of finitely generated projective modules. The algebra is trigraded and we identify the trigraded dimensions of the space of morphisms of this category with intersection numbers coming from the topological origin of the group.
\end{abstract}

\maketitle

\tableofcontents


\section*{Introduction}

In their seminal work, Khovanov and Seidel \cite{KhS} constructed two related (weak) categorical actions of the braid group. The first one is of algebraic nature and the second of geometric origin. They prove that both actions are faithful. The striking fact is that the proof of faithfulness relies on the topological nature of the braid group through its description as a mapping class group.\\

The constructions use various descriptions of the braid group: the finite Artin presentation, the description as a mapping class group and the description as the fundamental group of a configuration space. It turns also out that the braid group fits into the theory of Artin-Tits groups, as the Artin-Tits group of finite type $A$. When considering other Artin-Tits groups and looking which ones have all these various aspects or descriptions, one is led to look at the Artin-Tits group of finite type $B$. We construct in this paper a faithful algebraic categorical action of the Artin-Tits group of finite type $B$, seen as an extended braid group of affine type $A$. Before proceeding to a more detailed description, we expand a little bit on the  current situation about group actions on categories and the question of faithfulness.\\

There are plenty of actions of the braid groups which are now know, in particular with their connections to higher representation theory and link homologies. We mention here a couple of them in an non-exhaustive manner: Deligne \cite{Del}, Seidel-Thomas \cite{SeiTho}, Khovanov \cite{KhTang}, Stroppel \cite{Str}, Mazorchuk-Stroppel \cite{MaStr2,MaStr}, Rouquier \cite{R}, Khovanov-Rozansky \cite{KR2}, Webster \cite{Webint}, Cautis-Kamnitzer \cite{CauKam}, Lipshitz-Ozsvath-Thurston \cite{LipOzsThu}...
Many of them are known to be faithful. The constructions of Deligne and Rouquier put the braid group into the Artin-Tits group context and as such their constructions admit a immediate generalization to all finite Artin-Tits group of finite type. Categorical actions of Rouquier in type ADE are known to be faithful by a result of Brav and Thomas \cite{BrTho}. In addition Riche and Bezuriakhov \cite{Ric, BezRic} constructed a faithful action of the affine braid group on a derived category of coherent sheaves. For all we know most of the results of faithfulness rely on the result of Khovanov and Seidel.\\

Looking at categorical actions of the whole mapping class groups of an oriented surface with boundary components, the situation is drastically different, there is essentially only one such action given by Lipschitz-Ozsvath-Szabo in the context of bordered Heegaard-Floer homology and the proof of faithfulness in their setting is the same in spirit as the one of Khovanov and Seidel and as in this paper: the dimensions of the space of morphisms count certain minimal intersection numbers.\\

Let us also mention here some work generalizing Khovanov-Seidel-Thomas \cite{KhS, SeiTho} and making use of the spherical twists they introduced, in particular through cluster categories, see Smith \cite{Smi}, Grant-Marsh \cite{GraMar}, Qiu-Zhou \cite{Qiu, QiuZh}, Ishii-Ueda-Uehara \cite{IshUeUe}. In these latest papers, the authors recover faithfulness results for the (non-extended) affine type $A$ braid group.\\



The main features of the construction presented in this paper are the various gradings that we construct on the algebra under consideration. This gradings are natural in the sense that they correspond to gradings that appear when describing the Artin-Tits group as a mapping class group of a punctured surface and considering natural homological representation. Indeed under this topological description, the extended affine type $A$ braid group is simply the subgroup of the finite type $A$ braid group consisting of mapping classes fixing one chosen puncture, and as such acts on a $\mathbb{Z}^2$-covering of a punctured disk. One of the gradings corresponds to a winding number around the fixed puncture and this grading appears in a very non obvious way on the algebra (see Section \ref{action}). This specific grading allows us to obtain the main result of this paper.

\begin{thm*}
There exists a trigraded algebra $R_n$ such that
\begin{itemize}
\item The homotopy category of finitely generated trigraded projective modules over $R_n$ carries an action of the extended affine type $A$ braid group by exact endofunctors.
\item This categorical action is faithful.
\item The induced action on the Grothendieck group is a $2$-parameter homological representation of the extended affine type $A$ braid group.
\end{itemize}
\end{thm*}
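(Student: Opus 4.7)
The plan is to mirror the Khovanov--Seidel strategy while exploiting the topological picture of the extended affine type $A$ braid group as the mapping class group of a disk with $n$ punctures fixing one distinguished puncture. First I would construct the algebra $R_n$ as the path algebra of a cyclic quiver on $n$ vertices modulo the usual zigzag/nilpotency relations, and then equip it with three compatible gradings: an internal grading (coming from path length), a homological/cohomological grading, and the crucial winding grading counting how often a path wraps around the distinguished vertex. The fact that the winding grading is well defined on $R_n$ should follow from the compatibility of the defining relations with the lift to the universal cover of the cyclic quiver.

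Next I would define, for each Coxeter generator $\sigma_i$ of the affine type $A$ part, a complex of $(R_n,R_n)$-bimodules $P_i$ together with its inverse $P_i^{-1}$, modeled on the Khovanov--Seidel cone construction, and for the extended generator $\rho$ implementing the rotation of punctures I would use an explicit invertible bimodule permuting the idempotents and shifting the grading appropriately. The action on the bounded homotopy category $\ho(R_n\text{-proj})$ of finitely generated trigraded projective modules is then given by tensoring with these complexes. The verification of the braid relations, the affine relation, and the extended relation $\rho \sigma_i \rho^{-1} = \sigma_{i+1}$ (indices mod $n$) reduces to explicit bimodule computations — routine but delicate because one must track all three gradings simultaneously.

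The heart of the argument, and the step I expect to be the main obstacle, is faithfulness. Following the pattern of \cite{KhS}, I would pass to the topological model where elements of the extended affine braid group are represented by mapping classes of the $n$-punctured disk fixing one puncture, and each indecomposable projective $P_k$ corresponds to an admissible arc $c_k$. The goal is to establish an equality
\[
\dim_{\text{tri}} \Hom_{\ho(R_n\text{-proj})}(P_i, \beta \cdot P_j) \;=\; I_{\text{tri}}(c_i, \beta(c_j)),
\]
where the right hand side is a trigraded (i.e., refined by winding and by a second topological weight) minimal intersection number of curves. Establishing this identity requires (a) a normal form for morphisms in the homotopy category, analogous to the ``thin'' and ``taut'' representatives of Khovanov--Seidel, and (b) a careful bookkeeping of how the winding grading on the algebra corresponds to winding of arcs around the fixed puncture in the surface. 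Once this intersection formula is in hand, faithfulness is immediate: if $\beta$ acts trivially on the category then every $\beta(c_j)$ has the same refined intersection numbers with every $c_i$ as $c_j$ itself, forcing $\beta(c_j)$ to be isotopic to $c_j$ for all $j$, hence $\beta = 1$ in the mapping class group.

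Finally, the statement on the Grothendieck group follows formally: $K_0$ of the homotopy category is a free module over $\Z[q^{\pm 1}, t^{\pm 1}]$ (the two surviving grading variables after collapsing the homological one into a sign) generated by the classes $[P_k]$, and decategorifying the bimodule complexes gives an explicit matrix representation. Identification with the expected 2-parameter homological representation (a Burau- or Lawrence--Krammer-type deformation adapted to the extended affine setting) is then obtained by matching the action of the generators on $K_0$ with the action of the same mapping classes on the homology of the $\Z^2$-cover of the punctured disk described in the introduction; faithfulness of the categorical action implies in particular that this homological representation detects the same information that the refined intersection numbers do.
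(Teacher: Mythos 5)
Your overall strategy is the same as the paper's: the cyclic quiver algebra with a third ``winding'' grading, Khovanov--Seidel cone bimodules for the $\sigma_i$, an invertible grading-shuffled twisted bimodule for $\rho$, the identification of trigraded Hom-space Poincar\'e polynomials with trigraded intersection numbers of admissible curves, and decategorification onto a two-parameter homological (Burau-type) representation built from the $\Z^2$-cover. The differences in your description of the gradings are cosmetic (in the paper the path-length grading is only a $\Z/2\Z$ grading that decategorifies to a sign, and the grading paired with the cohomological one is a separate first grading on arrows).

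There is, however, one concrete gap in your faithfulness endgame. From the equality of (tri)graded intersection numbers you conclude that $\beta(c_j)$ is isotopic to $c_j$ for all $j$, ``hence $\beta=1$ in the mapping class group.'' That last implication is false: the kernel of the action of $\MCG(\D,n+1,\{0\})$ on isotopy classes of the arcs $b_1,\dots,b_n$ is exactly the center, the infinite cyclic group generated by $\rho^{n}$ (a full boundary twist), which moves no arc up to isotopy. The paper proves precisely this (a mapping class fixing all $b_i$ up to isotopy equals $\rho^{pn}$ for some $p\in\Z$), and then kills the residual center by a categorical, not topological, argument: $\Fu_{\rho^{pn}}$ is the grading-shift functor $\left<-p\right>$ in the third (winding) grading, which is the identity functor only when $p=0$. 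Your argument needs this extra step --- either by invoking the shift action of the center on the third grading of the category, or by upgrading the final topological deduction to trigraded curves and using that the $\Z^3$-action on isotopy classes of trigraded curves is free. Without it, faithfulness is only established modulo the center.
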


The connections with the work of Qiu-Zhou and Riche-Bezuriakhov are not clear even if there seem to be some. The complete connection with the symplectic side of the picture is still under investigation. Let us also underline that a Lie theoretic interpretation is also missing. Such a description might help to make a link with the categorical action coming from Mackaay-Thiel \cite{MT} and would probably allow to prove its faithfulness.\\

The paper is organized as follows. In the first section, we review various definitions of the extended braid group of affine type $A$ and in the second, various representations of this group. In the third section, we define a (weak) categorical action of this braid group and describe in particular the algebra $R_n$ and the induced action at the level of the Grothendieck group. In the fourth section, we introduced some trigraded intersection numbers using a certain covering of the real projectivization of the tangent bundle of a punctured disk. In the last section, we prove that one recovers from some spaces of morphisms the trigraded intersection numbers and deduce the faithfulness of the categorical action.\\

\section{Braid groups}

Let $n$ be a fixed integer with $n \geq 3$.

\subsection{Braid groups by generators and relations}

The extended affine type $A$ braid group~$\hat{\B}_{\hat{A}_{n-1}}$ is generated by 
$$\sigma_1, \dots, \sigma_{n},\rho,$$ 
subject to the relations
\begin{align}
\sigma_i \sigma_ j & =  \sigma_ j \sigma_i & & \mbox{for distant} \ i,j=1,\dots, n \label{BEA1}\\
\sigma_i \sigma_ {i+1} \sigma_i& =  \sigma_ {i+1} \sigma_i \sigma_ {i+1}& & \mbox{for} \ i=1,\dots, n   \label{BEA2}\\
\rho \sigma_i \rho^{-1} & =  \sigma_{i+1} & & \mbox{for} \ i=1,\dots, n \label{BEA3}
\end{align}
where the indices have to be understood modulo $n$, e.g. $\sigma_{n+1}=\sigma_1$ by definition. We say that $i$ and $j$ are distant (resp. adjacent) if $j\not\equiv i\pm 1 \mod n$ (resp. $i\equiv j\pm 1\mod n$).

One can do without the generator $\sigma_n$ at the cost of adding the relations $\rho^n \sigma_i \rho^{-n}  = \sigma_{i}$ for all $ i=1,\dots, n-1$ or equivalently the relation $\rho \sigma_{n-1} \rho^{-1}  = \rho^{-1} \sigma_{1} \rho$. 

In particular, the center $Z \left( \hat{\B}_{\hat{A}_{n-1}} \right) $ of the extended affine type $A$ braid group is infinite cyclic generated by $\rho^n$.

This group can be depicted by diagrams on the cylinder as shown in Figure~\ref{fig:braidcyl} with the convention that a diagram drawn from bottom to top corresponds to a braid word read from right to left. The generator $\sigma_i$ consists in a crossing between the strands labelled $i$ and $i+1$ modulo $n$ while $\rho$ consists in a cyclic permutation of the points $1$ to $n$.

\begin{figure}[htbp]
  \begin{center}
  \psfrag{1}{$\scriptscriptstyle{1}$}
  \psfrag{2}{$\scriptscriptstyle{2}$}
  \psfrag{i-1}{$\scriptscriptstyle{i-1}$}
  \psfrag{i}{$\scriptscriptstyle{i}$}
  \psfrag{i+1}{$\scriptscriptstyle{i+1}$}
  \psfrag{i+2}{$\scriptscriptstyle{i+2}$}
  \psfrag{n-1}{$\scriptscriptstyle{n-1}$}
  \psfrag{n}{$\scriptscriptstyle{n}$}
   \includegraphics[height=4cm]{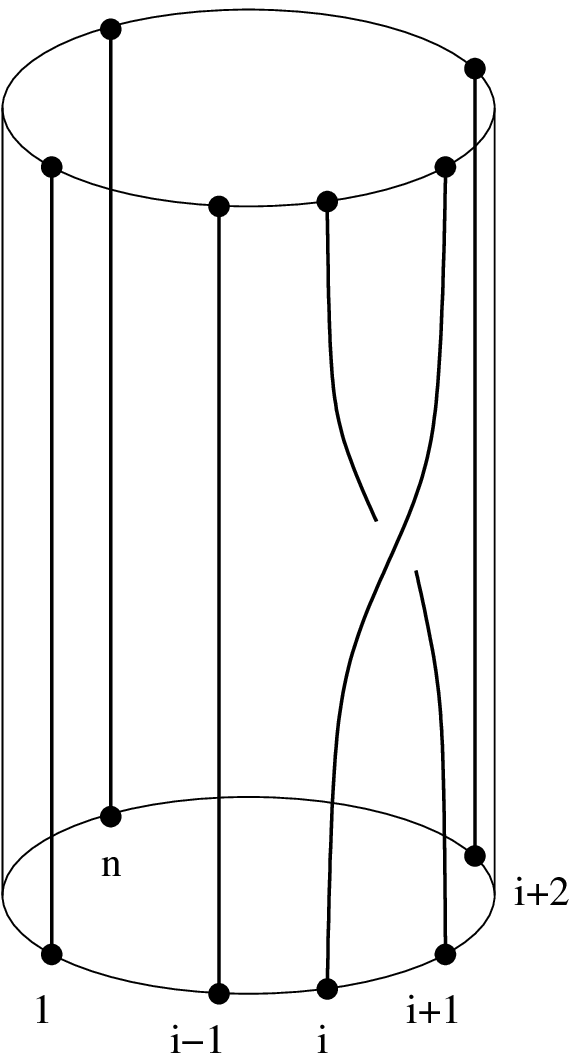} 
   \hspace{2.5cm}
   \includegraphics[height=4cm]{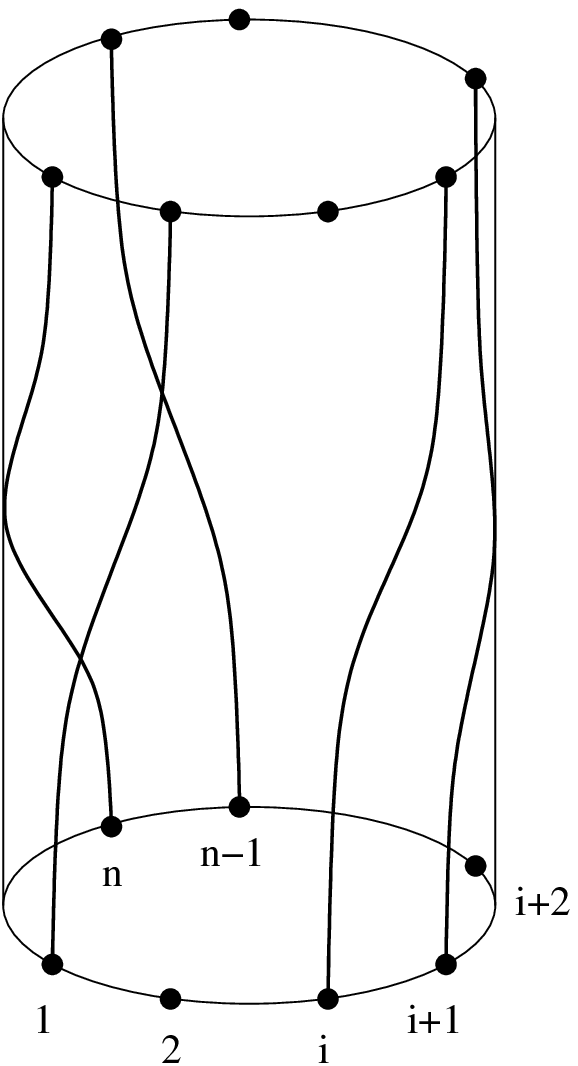}
   \caption{The affine braid generators $\sigma_i$ and $\rho$}
   \label{fig:braidcyl}
  \end{center}
\end{figure}

\begin{rems} The group~$\hat{\B}_{\hat{A}_{n-1}}$
\begin{itemize}
\item possesses as subgroups the finite type $A$ braid group~$\B_{A_{n-1}}$ generated by $\sigma_1, \dots, \sigma_{n-1}$, but also the affine type $A$ braid group~$\B_{\hat{A}_{n-1}}$ generated by $\sigma_1, \dots, \sigma_{n}$. In fact, $\hat{\B}_{\hat{A}_{n-1}}$ is simply isomorphic to the semi-direct product $\B_{\hat{A}_{n-1}} \rtimes \left< \rho \right>$ of the latter and of the infinite cyclic group generated by $\rho$,
where the action of $\rho$ on $\B_{\hat{A}_{n-1}}$ given by conjugation permutes cyclically the generators $\sigma_i$;
\item is isomorphic to the finite type $B$ braid group~$\B_{B_{n-1}}$ generated by $\sigma_1, \dots, \sigma_{n-1}$ and $\tau$ such that the~$\sigma_i$ are subject to the finite braid relations \eqref{BEA1} for $i = 1, \dots,n-1$ and \eqref{BEA2} for $i = 1, \dots,n-2$ and that the following relations are satisfied:
\begin{align} 
\sigma_i \tau & =  \tau \sigma_i & & \mbox{for} \ i=2,\dots, n-1 \label{BB1} \\
\tau \sigma_1 \tau \sigma_1 & = \tau \sigma_ {1} \tau \sigma_ {1}. & & \label{BB2} 
\end{align}
This isomorphism identifies the generators $\sigma_i$ for $i=1, \dots,n-1$ while it sends $\rho$ to the product $\tau \sigma_1 \dots \sigma_{n-1}$;
\item is a subgroup of the finite type $A$ braid group~$\B_{A_{n}}$ generated by the~$\sigma_i$ for~$i=0, \dots,n-1$ subject to the finite braid relations \eqref{BEA1} for $i = 0, \dots,n-1$ and \eqref{BEA2} for $i = 0, \dots,n-2$. It consists exactly in the subgroup generated by the elements of~$\B_{A_{n}}$ that leave the first strand (labelled by $0$) fixed. One hence recovers the cylindrical pictorial description of $\hat{\B}_{\hat{A}_{n-1}}$ by ''inflating'' this fixed strand that can be seen as the core of the cylinder, see e.g. Figure~\ref{fig:imrho} depicting the image $\sigma_0^2 \sigma_1 \dots \sigma_{n-1}$ of $\rho$ in $\B_{A_{n}}$. Note that the generator $\sigma_n$ is sent to $ \sigma_{0}^2 \sigma_{1} \dots \sigma_{n-2} \sigma_{n-1} \sigma_{n-2}^{-1} \dots \sigma_{1}^{-1} \sigma_{0}^{-2}$, and that in the type $B$ presentation of this group, $\tau$ is simply sent to $\sigma_0^2$.
\end{itemize}
\end{rems}

\begin{figure}[htbp]
  \begin{center}
  \psfrag{0}{$\scriptscriptstyle{0}$}
  \psfrag{1}{$\scriptscriptstyle{1}$}
  \psfrag{2}{$\scriptscriptstyle{2}$}
  \psfrag{i-1}{$\scriptscriptstyle{i-1}$}
  \psfrag{i}{$\scriptscriptstyle{i}$}
  \psfrag{i+1}{$\scriptscriptstyle{i+1}$}
  \psfrag{i+2}{$\scriptscriptstyle{i+2}$}
  \psfrag{n-2}{$\scriptscriptstyle{n-2}$}
  \psfrag{n-1}{$\scriptscriptstyle{n-1}$}
  \psfrag{n}{$\scriptscriptstyle{n}$}
   \includegraphics[height=4cm]{rhocyl.eps} 
   \hspace{1cm}
   \raisebox{2cm}{$\longmapsto$}
   \hspace{1cm}
   \includegraphics[height=4cm]{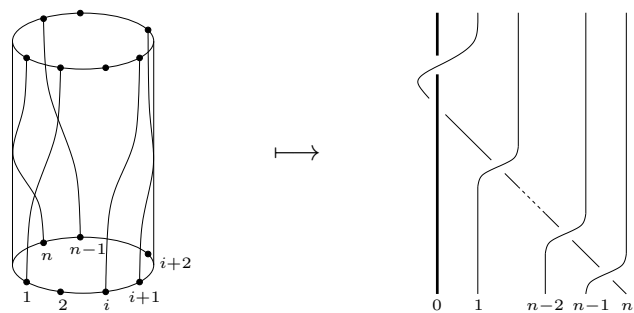}
   \caption{Image of $\rho$}
   \label{fig:imrho}
  \end{center}
\end{figure}

See \cite{All}, \cite{tD2}, \cite{KePei} or \cite{ChPei} for more details about this subject.

\subsection{Braid groups as mapping class groups}
\label{BraidMCG}

Let $M$ be an orientable surface possibly with boundary. We will denote by $\MCG(M, n+1)$ the mapping class group of the surface $M$ with $n+1$ marked points defined as the group of orientation-preserving homeomorphisms of $M$ that fix the $n+1$ marked points setwise and the boundary pointwise up to isotopy. We will use the notation $\Delta$ for the set $\{ 0, \dots, n\}$ of marked points and sometimes consider these marked points as punctures and view $M$ as a $n+1$-punctured surface.

For a fixed set $S \subset \Delta$, we will also consider the subgroup $\MCG(M, n+1, S)$ of $\MCG(M, n+1)$ consisting in all mapping classes fixing pointwise the punctures of $M$ belonging to the set $S$.

The finite type $A$ braid group~$\B_{A_{n}}$ is isomorphic to the mapping class group $\MCG(\D, n+1)$ of the $n+1$-punctured $2$-disk $\D$ depicted in Figure~\ref{fig:punctdisk}. 

\begin{figure}[htbp]
  \begin{center}
  \psfrag{D}{$\D$}
  \psfrag{dots}{$\dots$}
  \psfrag{0}{$\scriptscriptstyle{0}$}
  \psfrag{1}{$\scriptscriptstyle{1}$}
  \psfrag{2}{$\scriptscriptstyle{2}$}
  \psfrag{n-1}{$\scriptscriptstyle{n-1}$}
  \psfrag{n}{$\scriptscriptstyle{n}$}
   \includegraphics[height=3cm]{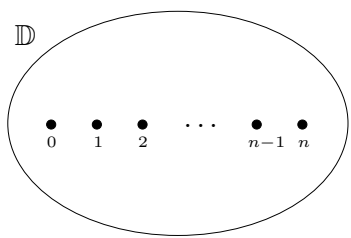} 
   \caption{The $n+1$-punctured $2$-disk $\D$}
   \label{fig:punctdisk}
  \end{center}
\end{figure}

Each generator $\sigma_i$ corresponds to the mapping class with support a small open disk enclosing the punctures $i$ and $i+1$ and consisting in rotating this disk by $\pi$ as described by Figure~\ref{fig:HTalpha}. We call this mapping class the half-twist along the arc $b_i$ and denote it by $t_{b_i}$. It swaps the punctures $i$ and $i+1$ and leaves all the others fixed pointwise.

\begin{figure}[htbp]
  \begin{center}
   \psfrag{i}{$\scriptscriptstyle{i}$}
   \psfrag{i+1}{$\scriptscriptstyle{i+1}$}
   \psfrag{alphai}{$\scriptstyle{b_i}$}
   \includegraphics[height=2cm]{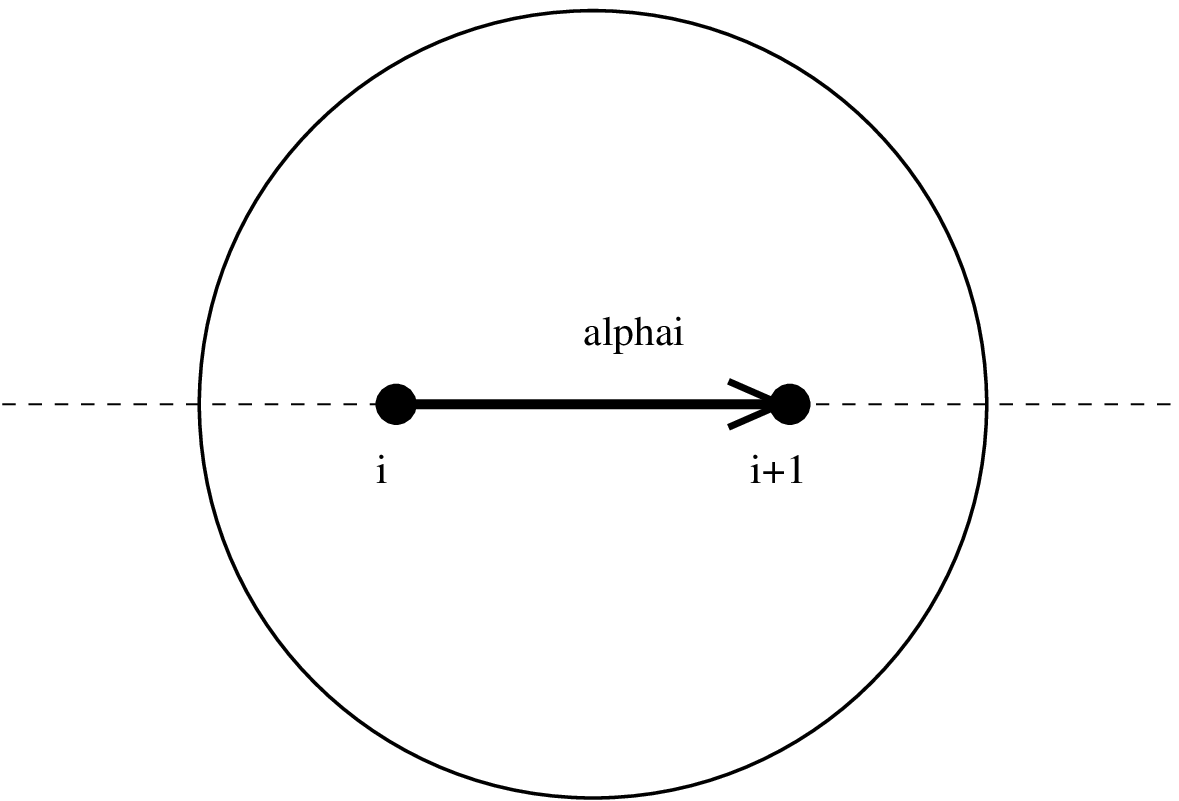} 
   \hspace{0.5cm}
   \raisebox{0.9cm}{$\longmapsto$}
   \hspace{0.5cm}
   \includegraphics[height=2cm]{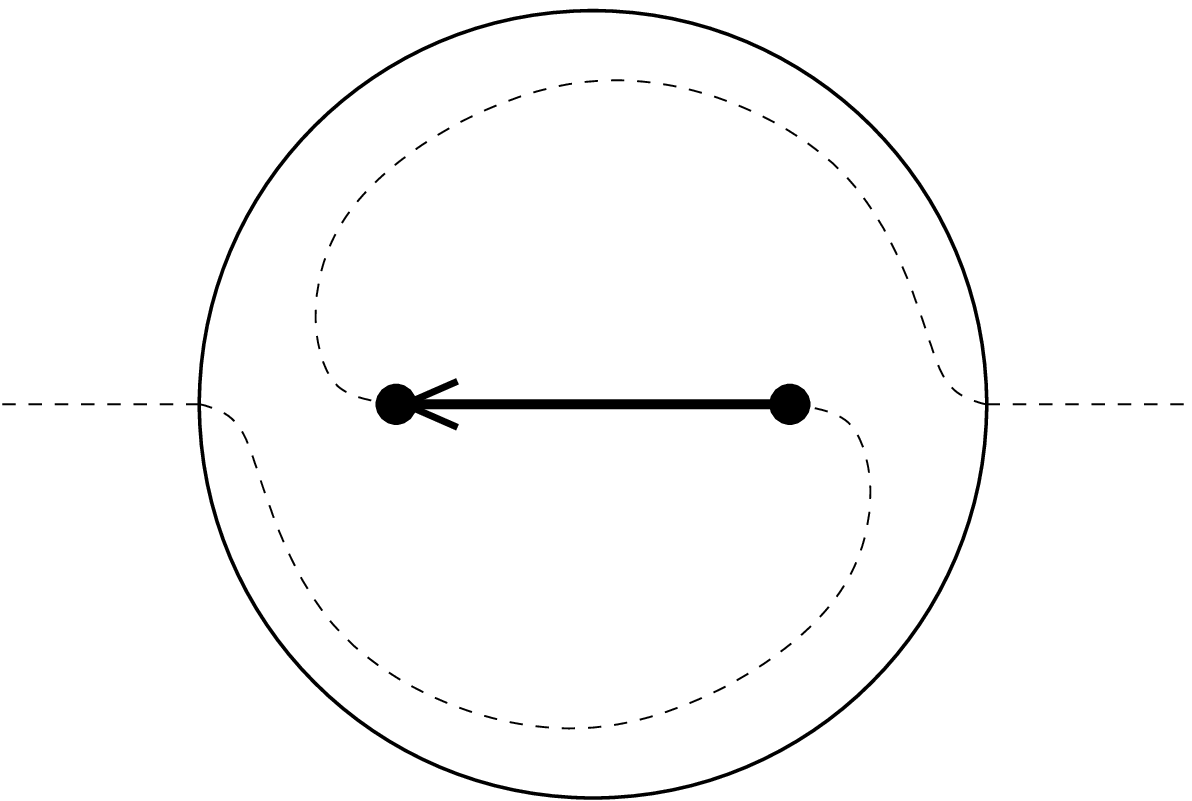}
   \caption{The half-twist along the arc $b_i$}
   \label{fig:HTalpha}
  \end{center}
\end{figure}

Its subgroup $\MCG(\D, n+1, \{ 0 \})$ is isomorphic to the finite type $B$ braid group~$\B_{B_{n-1}}$ where once again the half-twists $t_{b_i}$ are identified to the generators $\sigma_i$ for $i=1, \dots, n-1$, while $\tau$ corresponds to the full twist $t_{b_0}^2$. But $\B_{B_{n-1}}$ being isomorphic to $\hat{\B}_{\hat{A}_{n-1}} $, one might prefer to work with the extended affine $A$ presentation of this group. Then, to depict the generating mapping class corresponding to $\rho$, it is more convenient to draw the $n+1$-punctured $2$-disk as in Figure~\ref{fig:punctdisk2}. In this setting, $\rho$ will simply correspond to the $1/n$-twist $t_{\partial}$ with support an open disk enclosing all the punctures and consisting in rotating this disk by $\slfrac{2\pi}{n}$. It sends the $i$th puncture to the $i+1$st mod $n$, for $i=1, \dots, n$ while leaving the $0$th puncture fixed.

\begin{figure}[htbp]
  \begin{center}
  \psfrag{0}{$\scriptscriptstyle{0}$}
  \psfrag{1}{$\scriptscriptstyle{1}$}
  \psfrag{2}{$\scriptscriptstyle{2}$}
  \psfrag{n-1}{$\scriptscriptstyle{n-1}$}
  \psfrag{n}{$\scriptscriptstyle{n}$}
  \psfrag{i}{$\scriptscriptstyle{i}$}
  \psfrag{i+1}{$\scriptscriptstyle{i+1}$}
  \psfrag{i+2}{$\scriptscriptstyle{i+2}$}
    \psfrag{n-1}{$\scriptscriptstyle{n-1}$}
   \includegraphics[height=4cm]{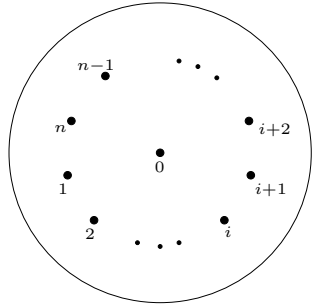} 
   \caption{The affine configuration of the $n+1$-punctured $2$-disk $\D$}
   \label{fig:punctdisk2}
  \end{center}
\end{figure}

\begin{rem} 
Let us consider the case where $M$ is the $n$-punctured annulus $\A$. Its mapping class group can be described as follows:
$$ \MCG(\A, n) \cong \B_{\hat{A}_{n-1}} \rtimes \Z^2 \cong \hat{\B}_{\hat{A}_{n-1}} \rtimes \Z$$
with $\Z^2 = \left< \mu, \eta \right> $ where $\mu$ corresponds to the $1/n$-twist $t_m$ with support an open annulus enclosing all the punctures  and consisting in rotating this annulus by $\slfrac{2\pi}{n}$ while $\eta$ corresponds to the full twist $t_c$ along a curve parallel to the central hole of $\A$. Note that when one collapses this central hole to a point, this last twist $t_c$ becomes trivial while $t_m$ equals $ t_{\partial}$ and one recovers $\hat{\B}_{\hat{A}_{n-1}}$ as $\MCG(\D, n+1, \{ 0 \})$. 
\end{rem}

In the sequel of this paper, we will sometimes use indistinctly the same notation for a braid and a homeomorphism representing its mapping class.

\section{Representations}

\subsection{Artin representation $\&$ braids as automorphisms of free groups}

When the finite type $A$ braid group $\B_{A_{n}}$ is topologically interpreted as the mapping class group $\MCG ( \D, n+1)$, it leads to a natural action on the fundamental group $\pi_1( \D,n+1, p)$ of the $n+1$-punctured $2$-disk. This latter group is the free group with $n+1$ generators $ F_{n+1}$ and is depicted on Figure~\ref{fig:punctdiskpi1}. 
\begin{figure}[htbp]
  \begin{center}
  \psfrag{D}{$\D$}
  \psfrag{b}{$\scriptstyle{p}$}
  \psfrag{dots}{$\dots$}
  \psfrag{0}{$\scriptscriptstyle{0}$}
  \psfrag{1}{$\scriptscriptstyle{1}$}
  \psfrag{2}{$\scriptscriptstyle{2}$}
  \psfrag{n-1}{$\scriptscriptstyle{n-1}$}
  \psfrag{n}{$\scriptscriptstyle{n}$}
  \psfrag{i}{$\scriptscriptstyle{i}$}
  \psfrag{i+1}{$\scriptscriptstyle{i+1}$}
  \psfrag{i+2}{$\scriptscriptstyle{i+2}$}
  \psfrag{x0}{$\scriptstyle{x_0}$}
  \psfrag{x1}{$\scriptstyle{x_1}$}
  \psfrag{x2}{$\scriptstyle{x_2}$}
  \psfrag{xn-1}{$\scriptstyle{x_{n-1}}$}
  \psfrag{xn}{$\scriptstyle{x_n}$}
   \includegraphics[height=4cm]{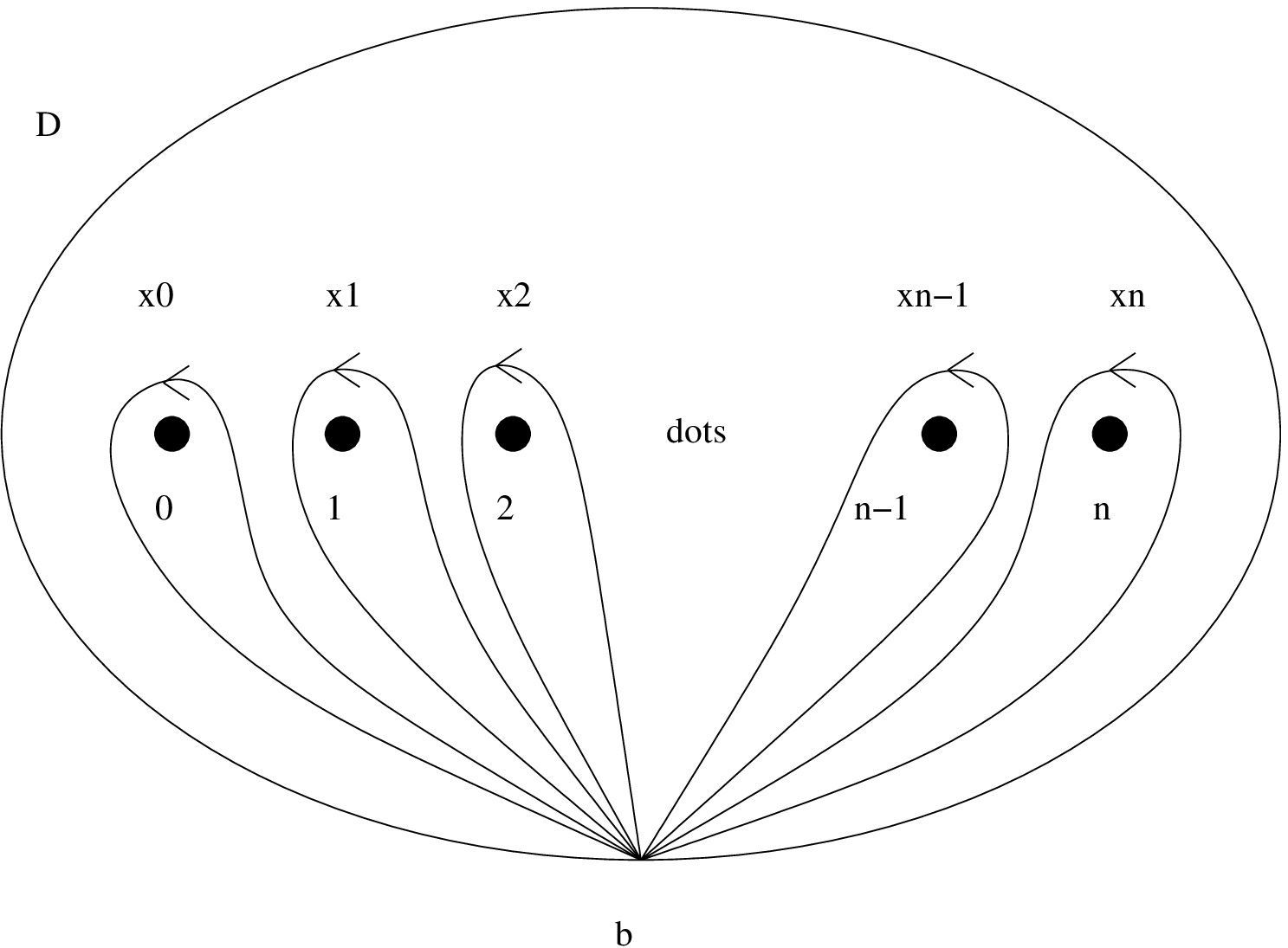} 
  \hspace{0.5cm}
      \raisebox{2cm}{$=$}
   \hspace{0.5cm}
    \includegraphics[height=4.5cm]{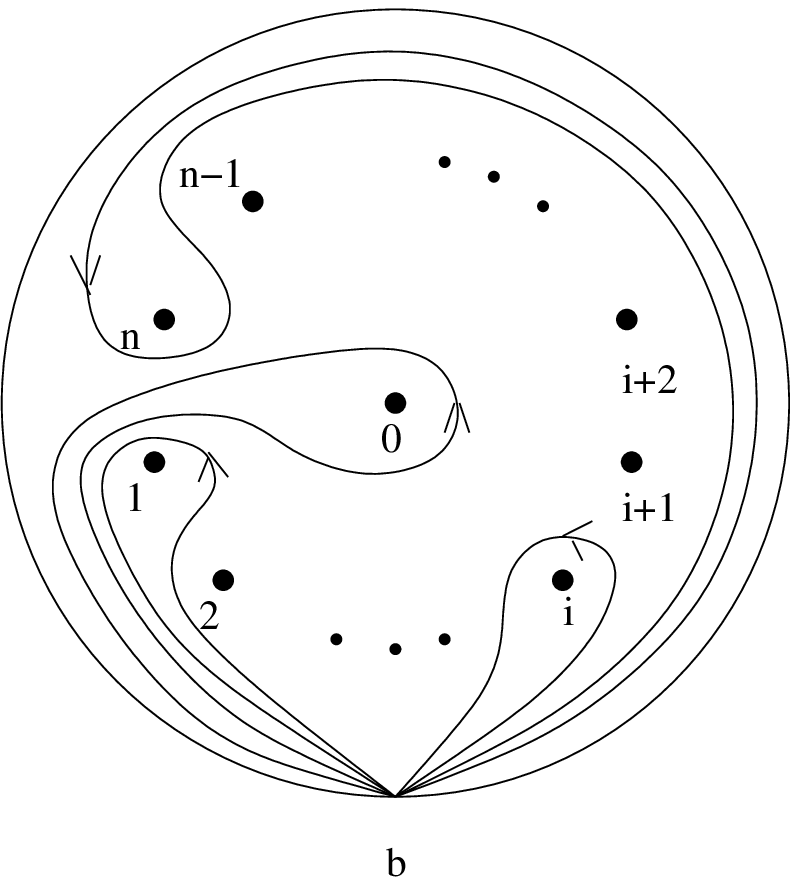} 
\caption{The fundamental group of the $n+1$-punctured $2$-disk $\D$}
   \label{fig:punctdiskpi1}
  \end{center}
\end{figure}
The action of the mapping class corresponding to $\sigma_i$ is described in Figure~\ref{fig:HTalphapi1}. 
\begin{figure}[htbp]
  \begin{center}
   \psfrag{b}{$\scriptstyle{p}$}
   \psfrag{xi}{$\scriptstyle{x_i}$}
   \psfrag{xi+1}{$\scriptstyle{x_{i+1}}$}
   \psfrag{i}{$\scriptscriptstyle{i}$}
   \psfrag{i+1}{$\scriptscriptstyle{i+1}$}
   \psfrag{alphai}{$\scriptstyle{\alpha_i}$}
   \includegraphics[height=2.5cm]{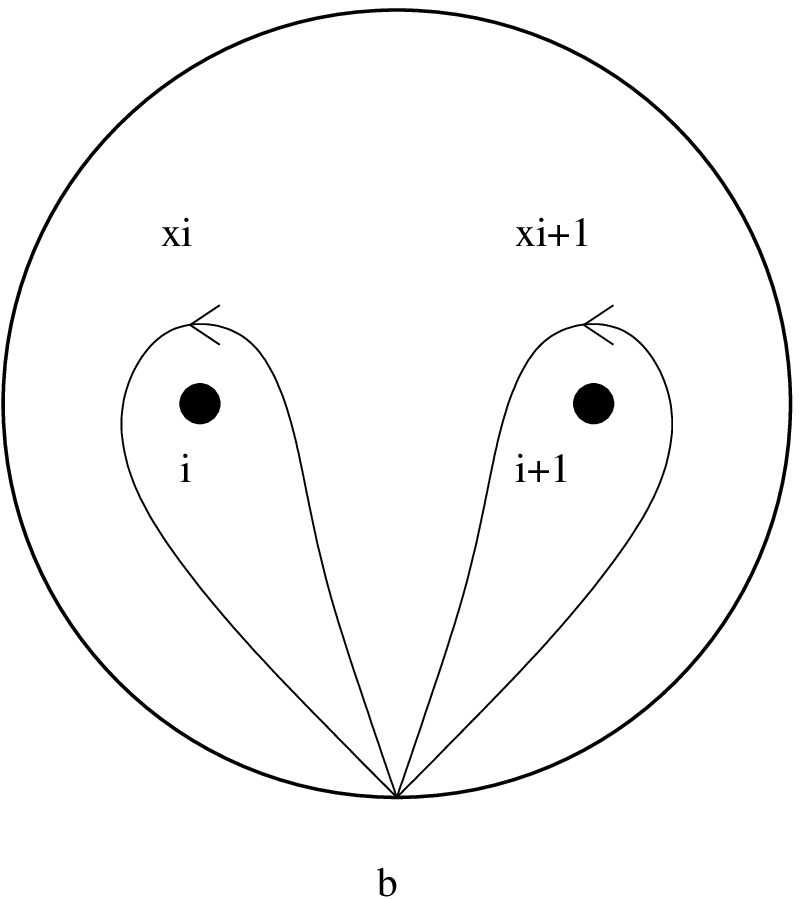} 
   \hspace{0.5cm}
   \raisebox{1.2cm}{$\longmapsto$}
   \hspace{0.5cm}
   \includegraphics[height=2.5cm]{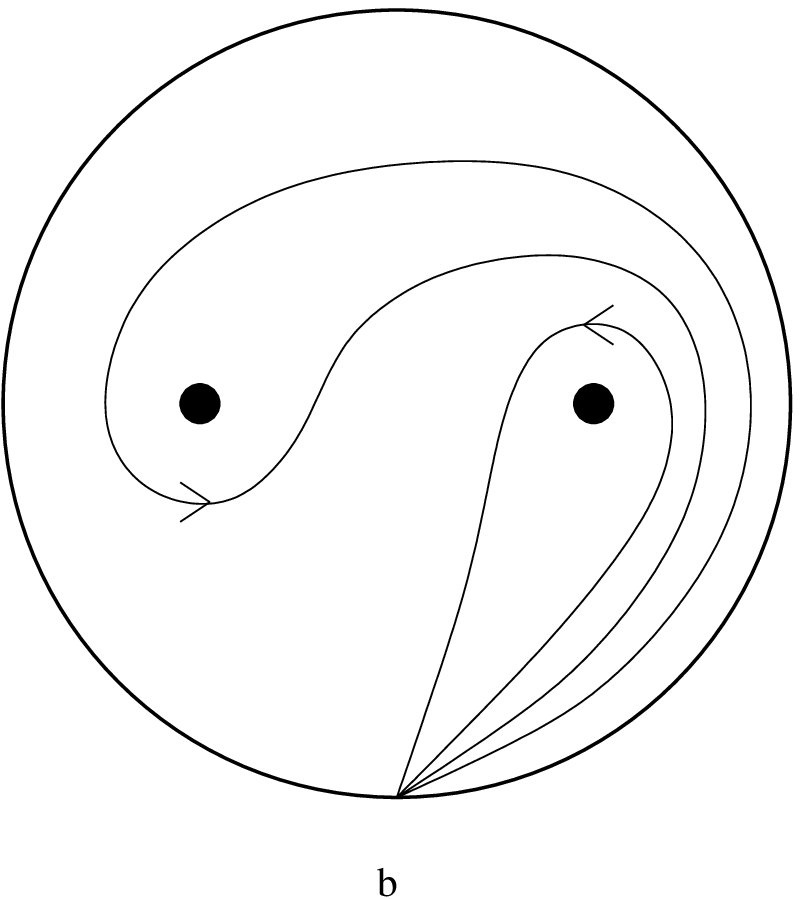}
   \caption{Topological description of the action of $\sigma_i$}
   \label{fig:HTalphapi1}
  \end{center}
\end{figure}
One hence obtains the well-known faithful Artin representation $\rho_A$ of $\B_{A_{n}}$ into the group of automorphisms of $ F_{n+1} = \left< x_0, \dots , x_{n}\right>$:
\begin{equation*}
\rho_A(\sigma_i)(x_j)  = 
\begin{cases}
x_{j+1} & \text{if $j=i$,}\\
x_j^{-1} x_{j-1} x_j & \text{if $j=i+1$,}\\
x_j & \text{otherwise}.
\end{cases}
\end{equation*}
In particular
\begin{equation*}
\rho_A(\sigma_i^{-1})(x_j)  = 
\begin{cases}
 x_j x_{j+1} x_j^{-1} & \text{if $j=i$,}\\
x_{j-1} & \text{if $j=i+1$,}\\
x_j & \text{otherwise}.
\end{cases}
\end{equation*}
Note that one reads a word in $\pi_1( \D,n+1, b)$ from right to left. In particular, the loop in $\pi_1( \D,n+1, b)$ that goes along the boundary counterclockwise is $ x_0 \dots x_n$.

When it comes to restricting $\rho_A$ to the subgroup $\hat{\B}_{\hat{A}_{n-1}}$ of $\B_{A_{n}}$, it is more convenient to picture the fundamental group $\pi_1( \D,n+1, p)$ in the affine configuration as shown also in Figure~\ref{fig:punctdiskpi1}. Let us just make explicit the images of the generators $\rho^{\pm 1}$ and $\sigma_n$ of $\hat{\B}_{\hat{A}_{n-1}}$ under the Artin representation:
\begin{align*}
\rho_A(\rho)(x_j) & =
\begin{cases}
x_1^{-1} x_{0} x_1 & \text{if $j=0$,}\\
x_n^{-1} \dots x_0^{-1} x_1 x_{0} \dots x_n & \text{if $j=n$,}\\
x_{j+1} & \text{otherwise}, \\
\end{cases} \\
\rho_A(\rho^{-1})(x_j) & =
\begin{cases}
x_{0} \dots x_n x_{n-1}^{-1} \dots x_1^{-1} x_{0} \dots x_{n-1} x_n^{-1} \dots x_0^{-1}& \text{if $j=0$,}\\
x_{0} \dots x_n x_{n-1}^{-1} \dots x_0^{-1}& \text{if $j=1$,}\\
x_{j-1} & \text{otherwise}, \\
\end{cases} \\
\rho_A(\sigma_n)(x_j) & = 
\begin{cases}
x_1^{-1} x_{0} \dots x_n x_{n-1}^{-1} \dots x_1^{-1} x_{0} \dots x_{n-1} x_n^{-1} \dots x_0^{-1} x_1 & \text{if $j=0$,}\\
x_1^{-1} x_{0} \dots x_n x_{n-1}^{-1} \dots x_0^{-1}  x_1  & \text{if $j=1$,}\\
x_n^{-1} \dots x_0^{-1} x_1 x_{0} \dots x_n & \text{if $j=n$,}\\
x_j & \text{otherwise}.
\end{cases}
\end{align*}

\subsection{Linear representation}

Out of these representations into the automorphisms group of the free group, one can construct, via Magnus expansion, linear representations of our braid groups. See \cite{JaBr}, \cite{BiMCG}, \cite{KaTu} for details about this procedure. Applied to $\rho_A$, the resulting linear representation is the famous Burau representation of $\B_{A_n}$ into $GL_{n+1} \left( \Z \left[ t^{\pm 1} \right] \right)$. 

But here we are interested in another way of defining this linear representation. The Burau representation (resp. its reduced version) can indeed be reconstructed homologically by considering the action of the finite type $A$ braid group $ \B_{A_n}$, viewed as mapping class group of the $n+1$--punctured disk, on the first relative homology group (resp. on the first homology group) of the infinite cyclic cover of $(\D,n+1)$ that has a structure of free $\Z \left[ t^{\pm 1} \right]$-module of rank $n+1$ (resp. of rank $n$). There exists a $n+1$ parameters generalisation of the Burau representation of $\B_{A_n}$, at the cost of restricting oneself to pure braids (the ones whose underlying permutation is the identity). The Gassner representation is then a linear representation of the pure braid group $\Pu_{A_n}$ into $GL_{n+1} \left( \Z \left[ t_1^{\pm 1}, \dots, t_n^{\pm 1} \right] \right)$. And similarly, the Gassner representation has a homological interpretation. 

For details on the finite case, see \cite{KaTu}, \cite{Oht}, 
 \cite{Abd}. We will focus here in defining an analogous homological linear representation but in the affine case. 

\subsubsection{Double infinite cyclic cover}\label{DG}
Consider the fundamental group $\pi_1( \D,n+1, p)$ of the $n+1$-punctured disk depicted in Figure~\ref{fig:punctdiskpi1} on which acts the extended affine type $A$ braid group $\hat{\B}_{\hat{A}_{n-1}} $ viewed as the mapping class group $\MCG(\D, n+1, \{ 0 \})$ via Artin representation. Let us consider the surjective group homomorphism $\phi_G$ from $\pi_1( \D,n+1, p)$ to $G = \Z \times \Z $ defined as follows 
$$ x_{i_1}^{m_1} x_{i_2}^{m_2} \dots x_{i_k}^{m_k} \mapsto \left( \sum_{ \substack{j = 1, \dots, k \\ i_j \neq 0}} m_j, \sum_{ \substack{j = 1, \dots, k \\ i_j = 0}} m_j \right) .$$
The cover $\tilde{\D}_G$, depicted in Figure~\ref{fig:cover}, associated to the kernel of this morphism has group of Deck transformations that is isomorphic to $G = \Z \times \Z   \cong \left< t \right> \times \left< q \right>$. Here we consider that the $n+1$-punctured disk, base of this covering, is displayed as in Figure~\ref{fig:punctdisk}. The homology group $H_1 \left( \tilde{\D}_G, G\tilde{p}, \Z \right)$ relative to the $G$--orbit of a fixed lift $\tilde{p}$ of the basepoint $p$ of $(\D,n+1)$ has a structure of $\Z \left[ t^{\pm 1}, q^{\pm 1} \right]$--module by inducing on the homology the action of $G$ on $\tilde{\D}_G$. As a $\Z \left[ t^{\pm 1}, q^{\pm 1} \right]$--module, it is free of rank $n+1$ and hence its automorphism group is isomorphic to $GL_{n+1} \left( \Z \left[ t^{\pm 1}, q^{\pm 1} \right] \right)$. While the homology group $H_1 \left( \tilde{\D}_G, \Z \right)$  is a free $\Z \left[ t^{\pm 1}, q^{\pm 1} \right]$--module of rank $n$ with automorphism group isomorphic to $GL_{n} \left( \Z \left[ t^{\pm 1}, q^{\pm 1} \right] \right)$. 

\begin{figure}[htbp]
  \begin{center}
  \psfrag{x0}{$\scriptstyle{\tilde{x}_0}$}
  \psfrag{x1}{$\scriptstyle{\tilde{x}_1}$}
  \psfrag{xn}{$\scriptstyle{\tilde{x}_n}$}
  \psfrag{b}{$\scriptscriptstyle{\tilde{p}}$}
  \psfrag{tb}{$\scriptscriptstyle{t\tilde{p}}$}
  \psfrag{qb}{$\scriptscriptstyle{q\tilde{p}}$}
  \psfrag{tqb}{$\scriptscriptstyle{tq\tilde{p}}$}
     \includegraphics[height=7cm]{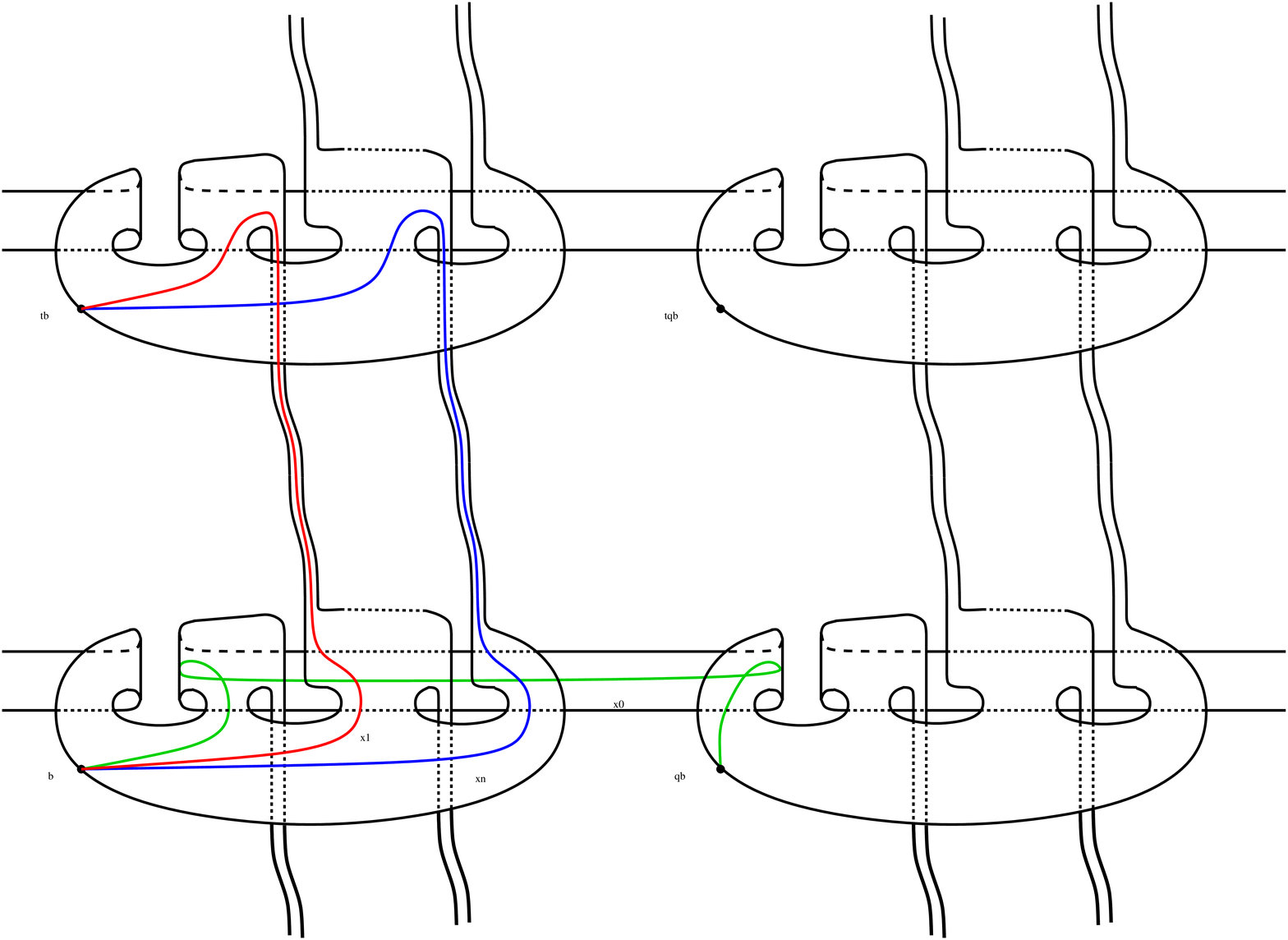} 
  \caption{The cover $\tilde{\D}_G$}
   \label{fig:cover}
  \end{center}
\end{figure}

Indeed recall that $n+1$-punctured disk $\D$ deformation retracts onto a bouquet of $n+1$ circles (or rose) generated by the loops $x_0, \dots, x_n$ based at $p$. Then the double infinite cyclic cover $\tilde{\D}_G$ deformation retracts onto the infinite graph $\Gamma_G$, see Figure~\ref{fig:gamma}. For $i= 1, \dots, n$, let $\tilde{x_i}$ be the lift of $x_i$ going from $\tilde{p}$ to $t\tilde{p}$ and let $\tilde{x_0}$ be the lift of $x_0$ going from $\tilde{p}$ to $q\tilde{p}$ depicted on Figure~\ref{fig:cover}. The set of vertices of $\Gamma_G$ is the $G$--orbit of $\tilde{p}$, i.e. $\{ t^kq^l\tilde{p} ; k,l \in \Z \}$ while its set of edges is given by $\{ t^kq^l\tilde{x_i} ; k,l \in \Z, i= 0, \dots, n \}$. So the simplicial complex associated to $\Gamma_G$ (resp. to the pair $\left(\Gamma_G, G\tilde{p}\right)$) is zero except in homological degree $1$ and $0$ (resp. degree $1$). Then it first follows directly that  
\begin{equation}
\label{baseH1rel}
 H_1 \left( \tilde{\D}_G, G\tilde{p}, \Z \right) = H_1 \left( \Gamma_G, G\tilde{p}, \Z \right) \cong \Z \left[ G \right] \left[ \left[ \tilde{x}_0 \right], \dots, \left[ \tilde{x}_n \right] \right] .
\end{equation}
where $\left[ \gamma \right]$ denotes the homology class of a path $\gamma$ in $\tilde{\D}_G$.
\begin{figure}[htbp]
  \begin{center}
    \psfrag{dots}{$\dots$}
     \includegraphics[height=4cm]{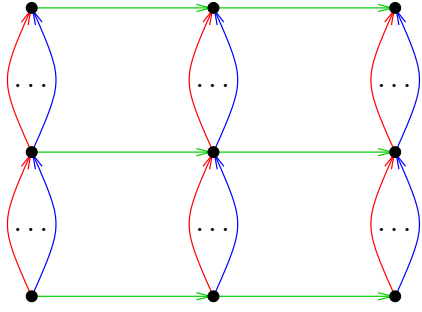} 
  \caption{The graph $\Gamma_G$}
   \label{fig:gamma}
  \end{center}
\end{figure}

On the other hand, the $\Z \left[ G \right]$--module $H_1 \left( \tilde{\D}_G, \Z \right)$ is free since the degree~$2$ chain of $\Gamma_G$ is zero. If one computes the kernel of the only non-zero differential of this simplicial complex, one obtains a basis for $H_1 \left( \tilde{\D}_G, \Z \right)$:
\begin{equation}
\label{baseH1}
H_1 \left( \tilde{\D}_G, \Z \right) = H_1 \left( \Gamma_G, \Z \right) \cong \Z \left[ G \right] \left[ \left[ \tilde{\gamma}_1 \right] , \dots, \left[ \tilde{\gamma}_{n-1} \right] ,\left[ \tilde{\gamma}_n \right] \right] 
\end{equation}
with
$$
 \left[ \tilde{\gamma}_i \right] =  \left[ \tilde{x}_{i+1} \right] - \left[ \tilde{x}_i \right] \quad \mbox{for} \ i=1,\dots, n-1,$$
the path $\tilde{\gamma}_i$ being $\tilde{x}_i^{-1}\tilde{x}_{i+1}$ in $\tilde{\D}_G$ and
$$
\left[ \tilde{\gamma}_n \right] =  (1-t) \left[ \tilde{x}_n \right] + t(1-t) \left[ \tilde{x}_{n-1} \right] + \dots + t^{n-1}(1-t) \left[ \tilde{x}_{1} \right] + t^{n}(1-t) \left[ \tilde{x}_{0} \right] +qt^n \left[ \tilde{x}_{1} \right] - \left[ \tilde{x}_n \right],$$
the path $\tilde{\gamma}_n$ being 
$$\tilde{x}_n^{-1}\left( t\tilde{x}_n\right)^{-1} \dots \left( t^{n}\tilde{x}_1\right)^{-1} \left( t^{n+1}\tilde{x}_0\right)^{-1} \left( qt^{n}\tilde{x}_{1} \right) \left(t^{n}\tilde{x}_0 \right) \left(t^{n-1}\tilde{x}_1 \right)\dots \tilde{x}_n$$
in $\tilde{\D}_G$.

Consider the group $H = \Z \cong  \left< t \right>$ and the group homomorphism $\phi_H$ which is the composite of $\phi_G$ by the projection from $G$ to $H$ that sends $(a,b)$ to $a+b$. The original Burau representation can be constructed by lifting the action of any mapping class to the corresponding cover $\tilde{\D}_H$ and then inducing to its first relative homology group. Note that this cover $\tilde{\D}_H$ is a quotient of the cover $\tilde{\D}_G$ we are working with. Here we mimic the homological construction of Burau but with this new cover $\tilde{\D}_G$. Before detailing this construction, let us also mention what happens if one plays the same game with the cover $\tilde{\D}_L$ associated to the abelianization map $\phi_L$, i.e. $L $ being $\Z^{n+1} \cong \left< t_0 \right> \times \dots \times \left< t_n \right>$, the first homology group of the $n+1$-punctured disk $\D$. In that case one obtains a representation of the subgroup consisting of all mapping classes which action commutes with $\phi_L$, namely the Torelli subgroup. In our case, where the surface is the disk $\D$, this Torelli subgroup is the pure braid group $\Pu_{A_n}$, and this homologically constructed representation is precisely the $n+1$ parameters $t_0, \dots, t_n$ Gassner representation.

\subsubsection{Homological representations}\label{rephomologique}

Now let us turn to the construction of the homological linear representation of $\hat{\B}_{\hat{A}_{n-1}}$ associated to the cover $\tilde{\D}_G$. For any braid $\sigma $ in $\hat{\B}_{\hat{A}_{n-1}}$ and loop $ x $ in $\pi_1( \D,n+1, p)$, the elements $x$ and $\rho_A (\sigma)(x)$ have same image under the homomorphism $\phi_G$ (this is not true for any braid in $\B_{A_n}$). This allows to construct a unique lift $\tilde{\sigma}$ acting on $\tilde{\D}_G$ of the mapping class $\sigma$ that fixes $\tilde{p}$ and commutes with the action of $G$ on $\tilde{\D}_G$ (and so fixes $G\tilde{p}$ pointwise). Let us denote by $\rho_H(\sigma)$ the automorphism of $H_1 \left( \tilde{\D}_G, G\tilde{p}, \Z \right) \cong \Z \left[ t^{\pm 1}, q^{\pm 1} \right]^{n+1}$ induced by $\tilde{\sigma}$, this provides an homological representation 
$$\rho_{H}: \hat{\B}_{\hat{A}_{n-1}} \rightarrow GL_{n+1} \left( \Z \left[ t^{\pm 1}, q^{\pm 1} \right] \right)$$

In fact we are looking for an homological representation of rank $n$ and not $n+1$, but let us still make $\rho_H$ explicit to understand how things work. For $i \neq 0, n$, the lift $\tilde{\sigma}_i$ leaves $\tilde{x}_{k}$ fixed for $k\neq i, i+1$, turns $\tilde{x}_{i}$ into $\tilde{x}_{i+1}$ and stretches $\tilde{x}_{i+1}$ to $\left( t\tilde{x}_{i+1}\right)^{-1} \left( t\tilde{x}_{i}\right) \tilde{x}_{i+1}$. One sees then immediately that it induces the following morphism $\rho_H(\sigma_i)$ on the homology: 
$$\left[ \tilde{x}_{k} \right] \mapsto \left[ \tilde{x}_{k}\right] \ \mbox{for} \ k \neq i, i+1, \quad \left[ \tilde{x}_{i} \right] \mapsto \left[ \tilde{x}_{i+1} \right] \ \mbox{and} \ \left[ \tilde{x}_{i+1} \right] \mapsto t \left[ \tilde{x}_{i} \right] + (1-t)\left[ \tilde{x}_{i+1} \right] $$
or written matricially in the basis given in \eqref{baseH1rel}:
\begin{equation*}
\rho_H(\sigma_i)  = 
\begin{pmatrix}
I_{i} & 0 & 0 & 0\\
0 & 0 & t & 0\\
0 & 1 & 1-t & 0 \\
0 & 0 & 0 & I_{n-1-i} \\
\end{pmatrix}
\end{equation*}
Similarly $\tilde{\sigma}_0^2$ leaves $\tilde{x}_{k}$ fixed for $k\neq 0, 1$, while it stretches $\tilde{x}_{0}$ to $\left( q\tilde{x}_{1}\right)^{-1} \left( t\tilde{x}_{0}\right) \tilde{x}_{1}$ and $\tilde{x}_{1}$ to $\left( t\tilde{x}_{1}\right)^{-1} \left( t^2\tilde{x}_{0}\right)^{-1}\left( tq\tilde{x}_{1}\right)\left( t\tilde{x}_{0}\right) \tilde{x}_{1}$ which induces the following on the homology:
\begin{equation*}
\rho_{H}(\sigma_0^2)  = 
\begin{pmatrix}
t & t(1-t) & 0 \\
1-q & 1+tq-t & 0 \\
0 & 0 & I_{n-1}  \\
\end{pmatrix} 
\end{equation*}
And finally $\tilde{\rho}$ sends $\tilde{x}_{k}$ to $\tilde{x}_{k+1}$ if $k\neq 0, n$, while it stretches $\tilde{x}_{0}$ to $\left( q\tilde{x}_{1}\right)^{-1} \left( t\tilde{x}_{0}\right) \tilde{x}_{1}$ and $\tilde{x}_{1}$ to 
$$\left( t\tilde{x}_{n}\right)^{-1} \left( t^2\tilde{x}_{n-1}\right)^{-1} \dots \left( t^n\tilde{x}_{1}\right)^{-1} \left( t^{n+1}\tilde{x}_{0}\right)^{-1}\left( qt^n\tilde{x}_{1}\right) \left( t^n\tilde{x}_{0}\right) \left( t^{n-1}\tilde{x}_{1}\right) \dots \left( t\tilde{x}_{n-1}\right) \tilde{x}_{n}$$
which induces the following on the homology:
\begin{equation*}
\rho_{H}(\rho)  = 
\begin{pmatrix}
t &  0 & 0 &\dots & 0 & t^n(1-t)\\
1-q & 0 & 0 &\dots & 0 & t^{n-1}(1-t+qt) \\
0 & 1 & 0 & \dots & 0 & t^{n-2}(1-t) \\
\vdots & \ddots & \ddots & \ddots & \vdots & \vdots \\
0 & \dots &  0 & 1  & 0 & t(1-t) \\
0 & \dots & \dots & 0 & 1 & 1-t \\
\end{pmatrix} 
\end{equation*}

Now let us make explicit the homological representation of dimension $n$ we are interested in. For any braid $\sigma $ in $\hat{\B}_{\hat{A}_{n-1}}$, its lift $\tilde{\sigma}$ acting on $\tilde{\D}_G$ induces an automorphism denoted $\rho_{RH}(\sigma)$  of $H_1 \left( \tilde{\D}_G, \Z \right) \cong \Z \left[ t^{\pm 1}, q^{\pm 1} \right]^{n}$ and hence the following homological representation:
$$\rho_{RH}: \hat{\B}_{\hat{A}_{n-1}} \rightarrow GL_{n} \left( \Z \left[ t^{\pm 1}, q^{\pm 1} \right] \right)$$
Let us again detail how the lifts of braid generators act on paths in the cover $\tilde{\D}_G$ and then what is the induced linear action on its homology. For $i=2, \dots, n-1$, the lift $\tilde{\sigma}_i$ leaves $\tilde{\gamma}_{k}$ fixed for $k\neq i-1,i, i+1$, turns the path $\tilde{\gamma}_{i} = \tilde{x}_{i}^{-1}\tilde{x}_{i+1}$ into $\left(\tilde{x}_{i+1}\right)^{-1}\left( t\tilde{x}_{i+1}\right)^{-1} \left( t\tilde{x}_{i}\right) \tilde{x}_{i+1}$, turns the path $\tilde{\gamma}_{i-1} =\tilde{x}_{i-1}^{-1}\tilde{x}_{i}$ into $ \tilde{x}_{i-1}^{-1}\tilde{x}_{i+1}$ and turns the path $\tilde{\gamma}_{i+1} =\tilde{x}_{i+1}^{-1}\tilde{x}_{i+2}$ into $\left(\tilde{x}_{i+1}\right)^{-1}\left( t\tilde{x}_{i}\right)^{-1} \left( t\tilde{x}_{i+1}\right) \tilde{x}_{i+2}$. One sees then immediately that it induces the following morphism $\rho_{RH}(\sigma_i)$ on the homology: 
$$\left[ \tilde{\gamma}_{k} \right] \mapsto \left[ \tilde{\gamma}_{k} \right] \ \mbox{for} \ k \neq i-1, i, i+1, \quad \left[ \tilde{\gamma}_{i-1} \right] \mapsto \left[ \tilde{\gamma}_{i-1} \right] + \left[ \tilde{\gamma}_{i} \right],$$
$$ \left[ \tilde{\gamma}_{i} \right] \mapsto -t \left[ \tilde{\gamma}_{i} \right] \ \mbox{and} \ \left[ \tilde{\gamma}_{i+1} \right] \mapsto \left[ \tilde{\gamma}_{i+1} \right] + t \left[ \tilde{\gamma}_{i} \right] $$
or matricially in the basis \eqref{baseH1}:
\begin{equation*}
 \rho_{RH}(\sigma_i)  = 
\begin{pmatrix}
I_{i-2} & 0 & 0 & 0 & 0\\
0 & 1 & 0 & 0 & 0 \\
0 & 1 &-t & t & 0\\
0 & 0 & 0 & 1 & 0 \\
0 & 0 & 0 & 0 & I_{n-1-i} \\
\end{pmatrix} \qquad \text{for } i = 2, \dots n-1,
\end{equation*}
The lift $\tilde{\sigma}_1$ acts similarly on $\tilde{\gamma}_{1}$ and $\tilde{\gamma}_{2}$, so we only have to observe that it turns the path 
$$\tilde{\gamma}_{n} =\tilde{x}_n^{-1}\left( t\tilde{x}_n\right)^{-1} \dots \left( t^{n}\tilde{x}_1\right)^{-1} \left( t^{n+1}\tilde{x}_0\right)^{-1} \left( qt^{n}\tilde{x}_{1} \right) \left(t^{n}\tilde{x}_0 \right) \left(t^{n-1}\tilde{x}_1 \right)\dots \tilde{x}_n$$
 into 
$$\tilde{x}_n^{-1}\left( t\tilde{x}_n\right)^{-1} \dots \left( t^{n}\tilde{x}_1\right)^{-1} \left( t^{n+1}\tilde{x}_0\right)^{-1} \left( qt^{n}\tilde{x}_{2} \right) \left(t^{n}\tilde{x}_0 \right) \left(t^{n-1}\tilde{x}_1 \right)\dots \tilde{x}_n$$
and hence induces the following morphism $\rho_{RH}(\sigma_1)$ on the homology: 
$$\left[ \tilde{\gamma}_{k} \right] \mapsto \left[ \tilde{\gamma}_{k} \right] \ \mbox{for} \ k \neq n, 1, 2, \quad \left[ \tilde{\gamma}_{n} \right] \mapsto \left[ \tilde{\gamma}_{n} \right] + qt^n \left[ \tilde{\gamma}_{1} \right],$$
$$ \left[ \tilde{\gamma}_{1} \right] \mapsto -t \left[ \tilde{\gamma}_{1} \right] \ \mbox{and} \ \left[ \tilde{\gamma}_{2} \right] \mapsto \left[ \tilde{\gamma}_{2} \right] + t \left[ \tilde{\gamma}_{1} \right] $$ 
i.e.
\begin{equation*}
 \rho_{RH}(\sigma_1)  = 
\begin{pmatrix}
-t & t & 0 & t^n q\\
0 & 1 & 0 & 0\\
0 & 0 & I_{n-3} & 0 \\
0 & 0 & 0 & 1 \\
\end{pmatrix}
\end{equation*}
Finally, it is obvious that the lift $\tilde{\rho}$ sends $\tilde{\gamma}_{k}$ to $\tilde{\gamma}_{k+1}$ for all $k \neq n$, while it turns $\tilde{\gamma}_{n}$ into 
$$\left( \tilde{x}_{n}\right)^{-1} \left( t\tilde{x}_{n-1}\right)^{-1} \dots \left( t^{n-1}\tilde{x}_{1}\right)^{-1} \left( t^{n}\tilde{x}_{0}\right)^{-1} \left( qt^{n}\tilde{x}_{1}\right)^{-1} \left( qt^n\tilde{x}_{2}\right) \left( t^n\tilde{x}_{0}\right) \left( t^{n-1}\tilde{x}_{1}\right) \dots \left( t\tilde{x}_{n-1}\right) \tilde{x}_{n}$$
so induces the morphism $\rho_{RH}(\rho)$ on the homology: 
$$\left[ \tilde{\gamma}_{k} \right] \mapsto \left[ \tilde{\gamma}_{k+1} \right] \ \mbox{for} \ k \neq n, \quad \left[ \tilde{\gamma}_{n} \right] \mapsto qt^n \left[ \tilde{\gamma}_{1} \right]$$
i.e.
\begin{equation*}
\rho_{RH}(\rho)  =
\begin{pmatrix}
0 &  t^n q \\
I_{n-1} & 0 \\
\end{pmatrix} 
\end{equation*}
The images of $\sigma_0^2$, $\sigma_n$ and $\rho^{-1}$ under $\rho_{RH}$ can be obtained using the ones already expressed and the affine braid relations.

\section{Action on a module category}\label{action}

In \cite{KhS}, Khovanov and Seidel construct a categorical representation of the finite type $A$ braid group $ \B_{A_{n}}$ in the homotopy category of the category of graded projective  left modules over a certain quotient of the path algebra of a finite type $A$ quiver. This representation is faithful and it decategorifies on a linear one parameter representation equivalent to the Burau representation of $\B_{A_n}$. Our aim is, following Khovanov and Seidel's ideas, to use an affine type $A$ quiver in order to obtain a categorical representation of the extended affine type $A$ braid group $\hat{\B}_{\hat{A}_n}$. This latter representation is designed to decategorify on the linear $2$-parameters representation $\rho_{RH}$ constructed in Section \ref{rephomologique}, and hence requires to work with a module category endowed with a rich algebraic structure, namely a trigrading. 

This section is devoted to the definitions of those affine quiver algebra, trigraded module category and categorical representation.

\subsection{The quiver algebra $R_n$}

The notation used here for paths is taken from \cite{KhS}. Start with the cyclic quiver $\Gamma_n$ pictured in Figure \ref{fig:quiverAaff}
\begin{figure}[!h]
  \begin{center}
  \psfrag{1}{$\scriptscriptstyle{1}$}
  \psfrag{2}{$\scriptscriptstyle{2}$}
  \psfrag{n-1}{$\scriptscriptstyle{n-1}$}
  \psfrag{n}{$\scriptscriptstyle{n}$}
  \psfrag{i}{$\scriptscriptstyle{i}$}
  \psfrag{i+1}{$\scriptscriptstyle{i+1}$}
  \psfrag{i+2}{$\scriptscriptstyle{i+2}$}
  \includegraphics[height=4cm]{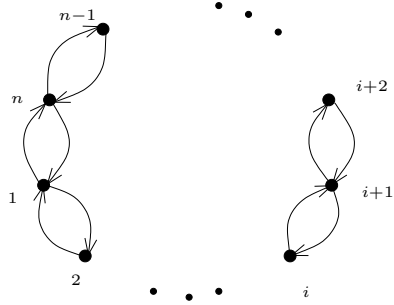} 
  \caption{The affine type $A$ quiver $\Gamma_n$}
   \label{fig:quiverAaff}
  \end{center}
\end{figure}
and let $R_n$ be the quotient of the path ring of the quiver $\Gamma_n$ by the relations:
\begin{align*}
 \left( i|i+1|i \right) & = \left( i|i-1|i \right) \qquad \text{for } i = 1, \dots n, \\
 \left( i-1|i|i+1 \right) & = \left( i+1|i|i-1 \right)= 0 \qquad \text{for } i = 1, \dots n, \\
\end{align*}
where the integers are taken modulo $n$.
This ring is trigraded and unital, with a family of mutually orthogonal idempotents $ \left( i \right)$ summing up to the unit element. The three gradings on $R_n$ are defined as follows:
\begin{itemize}
 \item the first grading is defined by setting that the degree of $ \left( i|i+1 \right)$ is one while the degree of any other generator is zero (which is the opposite convention as the one chosen by Khovanov and Seidel);
\item the second grading is simply the path length grading. Note that with the given relations, any path is at most of length $2$ in $R_n$, hence this second grading will be considered as a $\Z / 2\Z$ grading;
\item the third grading is defined by setting that the degree of $ \left( n|1 \right)$ is $1$, the degree of $ \left( 1|n \right)$ is $-1$ while the degree of any other element in $R_n$ is zero.
\end{itemize}
These three gradings are well-defined and one will denote by $\{ - \}$ a shift in the first grading, by $\left( - \right)$ a shift in the second and by $\left< - \right>$ a shift in the third. The convention being that the $i$th summand of a module shifted by $k$ is the $(i-k)$th summand of the original module.

As an abelian group $R_n$ is free of rank $4n$.

\begin{rem}
If one forgets about the two last gradings on $R_n$, and just consider it as a singly graded algebra, it is in fact a particular case of the general construction of algebras associated to graphs by Huerfano and Khovanov, see \cite{HK}. Note that, in this paper, they are also considering actions of quantum groups and braid groups on certain module categories over these algebras.
\end{rem}

The category of finitely generated trigraded left modules $R_n-\mbox{mod}$ has a Grothendieck ring $K\left( R_n-\mbox{mod} \right)$ which is isomorphic to $ \Z \left[ t^{\pm 1}, s^{\pm 1} \right] \ot \Z^{n}$.  The $ \Z \left[ t^{\pm 1} , s^{\pm 1} \right]$-module structure coming from the self--equivalences $ \{ 1 \}$ and $\left< 1 \right>$ of $R_n-\mbox{mod}$ consisting in shifting the first and third gradings by one.  The problem with this category is that the isomorphism classes of the indecomposable left projective modules $P_i = R_n(i)$ do not form a basis as in the finite Khovanov-Seidel case because $R_n$ has infinite global dimension. 

Note that in the sequel, we will denote the right indecomposable projective modules $_iP = (i) R_n$ and the isomorphism class of a module $M$ by $\left[ M \right]$.

Hence we will rather work over the category $R_n-\mbox{proj}$ of finitely generated trigraded projective left modules. This category, unlike $R_n-\mbox{mod}$, is not abelian, but only additive, though its split Grothendieck ring is also isomorphic to $ \Z \left[ t^{\pm 1}, s^{\pm 1} \right] \ot \Z^{n}$, with basis $\mathbf{f} =\left\{ \left[P_i \right], i=1,\dots,n \right\} $. While, as before, the first grading (resp. the third) decategorifies onto the $ \Z \left[ t^{\pm 1} \right]$-module (resp. $ \Z \left[ s^{\pm 1} \right]$-module) structure, the second grading, which is a $\Z / 2\Z$ grading, decategorifies as a sign (which will sometimes be denoted $\epsilon$). 

Let $t_{\rho}$ be the automorphism of the ring $R_n$ that sends any path $(i_1|i_2|\dots|i_k)$ to $(i_1+1|i_2+1|\dots|i_k+1)$. One can observe that this automorphism do not preserve the trigrading on $R_n$, but only the two first gradings. This implies that, if one constructs a bimodule $R_{n}^{\rho}$ by simply twisting the right action on the regular bimodule $R_n$ by $t_{\rho}$, i.e. $r \in R_n$ acts on $R_{n}^{\rho}$ on the right by multiplication by $t_{\rho}(r)$, the resulting bimodule is not trigraded anymore. So, in order to define a trigraded twisted bimodule, one cannot only twist the action on the regular bimodule $R_n$ but one has to construct a new bimodule in the more subtle way that we will describe now. 

Let $T_n^{\rho}$ be the trigraded $R_n$--bimodule generated by all elements of $R_n$ set to be in the same first and second degree as in $R_n$, but with the third grading shuffled as follows: 
\begin{itemize}
\item the degree of $ \left( 1 \right)$, $ \left( 2|1 \right)$, $ \left( 1|n \right)$ and $ \left( 1|2|1 \right)$ is $-1$
\item the degree of any other element is zero.
\end{itemize} 
The left action of $R_n$ on $T_n^{\rho}$ is simply the multiplication while its right action is the multiplication twisted by $t_{\rho}$. Let us also consider the trigraded $R_n$--bimodule $^{\rho}T_n$ constructed similarly but with the third grading shuffling given by: 
\begin{itemize}
\item the degree of $ \left( 1 \right)$, $ \left( n|1 \right)$, $ \left( 1|2 \right)$ and $ \left( 1|2|1 \right)$ is $1$
\item the degree of any other element is zero.
\end{itemize}
Here $R_n$ acts on the right on $^{\rho}T_n$ by multiplication and on the left by multiplication twisted by $t_{\rho}$.

\begin{lem}
 $T_n^{\rho}$ and $^{\rho}T_n$ are well-defined trigraded $R_n$--bimodules.
\end{lem}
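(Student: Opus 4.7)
The abelian group underlying $T_n^\rho$ is a copy of $R_n$, with the left action given by ring multiplication and the right action by ring multiplication precomposed with $t_\rho$. Associativity of the two actions unfolds, via the ring-homomorphism property of $t_\rho$, to associativity in $R_n$; distributivity and unitality are likewise inherited. The same applies to $^\rho T_n$ with the roles of left and right interchanged. Hence the only substantive content is the compatibility of the trigrading with both actions.

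Write $|b|_R$ and $|b|_T$ for the third-grading degree of a basis element $b$ in $R_n$ and $T_n^\rho$ respectively, and set $\epsilon(b) := |b|_T - |b|_R$. A direct inspection shows that $\epsilon(b) = -1$ precisely when $b \in \{(1),(2|1),(n|1),(1|2|1)\}$---the four basis elements whose target vertex is~$1$---and $\epsilon(b) = 0$ otherwise. (In particular $(1|n)$ contributes $\epsilon = 0$, as $|(1|n)|_T = -1 = |(1|n)|_R$.) The first and second gradings coincide with those of $R_n$ and are preserved by $t_\rho$ (which sends a length-$k$ path with first-degree $\ell$ to another such), so their compatibility with both actions is automatic.

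For the third grading, the left-action identity $|rb|_T = |r|_R + |b|_T$ reduces, using $|rb|_R = |r|_R + |b|_R$ in $R_n$, to $\epsilon(rb) = \epsilon(b)$. This holds because left multiplication preserves the target of its right factor, and $\epsilon(b) = -1$ precisely when $b$ has target~$1$. The right-action identity $|b \cdot t_\rho(r)|_T = |b|_T + |r|_R$ reduces, by the same trick, to $\epsilon(b \cdot t_\rho(r)) - \epsilon(b) = |r|_R - |t_\rho(r)|_R$. The right-hand side vanishes except for $r \in \{(n{-}1|n),(n|n{-}1),(n|1),(1|n)\}$, where it equals $-1,+1,+1,-1$ respectively. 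For each such $r$ one inspects the short list of nonzero products $b \cdot t_\rho(r)$ and verifies that the change in target vertex (hence in $\epsilon$) matches the prescribed value. This finite tabulation, essentially bookkeeping, is the only real obstacle. The argument for $^\rho T_n$ is entirely symmetric: one finds that the analogous discrepancy $\epsilon'$ equals $+1$ exactly on the basis elements of source~$1$, which is preserved by the now-untwisted right action, while the left-twisted action calls for the analogous four-arrow check.
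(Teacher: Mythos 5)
Your proof is correct and takes essentially the same route as the paper, whose entire argument is the one-line assertion that $\deg(r)+\deg(a)=\deg(ra)$ and $\deg(b)+\deg(r)=\deg(b\,t_{\rho}(r))$ are ``easy to check''; you simply carry out that check, organized via the (correct) observation that the third-grading discrepancy between $T_n^{\rho}$ and $R_n$ is the indicator of target vertex $1$ (resp.\ source vertex $1$ for $^{\rho}T_n$), which reduces everything to the four arrows touching vertex $n$.
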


\begin{proof}
 It is easy to check that, for any $r \in R_n$ and $a,b \in T_n^{\rho}$, one gets $\deg(r) + \deg (a) = \deg (r.a) = \deg (ra)$ and $\deg(r) + \deg (b) = \deg (b.r) = \deg (bt_{\rho}(r))$. And similarly for $^{\rho}T_n$.
\end{proof}

\begin{rem}
The chosen shufflings of the third grading appear to be natural when one observes that, as a trigraded left $R_n$--module, $T_{n}^{\rho}$ is simply isomorphic to $P_1 \left< -1 \right> \oplus P_{2}  \oplus \ldots \oplus P_n $ while, as a trigraded right $R_n$--module, $\vphantom{P}^{\rho}T_{n} $ is isomorphic to $\vphantom{P}_1P \left< 1 \right> \oplus \vphantom{P}_{2}P \oplus \ldots \oplus \vphantom{P}_nP$.
\end{rem}

The following lemma can be verified by simple computations that we omit here.

\begin{lem}\label{isostrig}
We have the following isomorphisms of trigraded $R_n$--bimodules 
$$T_{n}^{\rho} \ot_{R_n} \vphantom{P}^{\rho}T_{n} \cong  R_{n} \cong \vphantom{P}^{\rho}T_{n} \ot_{R_n} T_{n}^{\rho},$$
of trigraded left $R_n$--modules
$$ T_{n}^{\rho} \ot_{R_n} P_i \cong P_{i+1}$$
for all $i = 1, \dots, n-1$, and 
$$ T_{n}^{\rho} \ot_{R_n} P_n \cong P_{1}\left< -1 \right>$$
and of trigraded right $R_n$--modules
$$\vphantom{P}_iP  \ot_{R_n} \vphantom{P}^{\rho}T_{n} \cong \vphantom{P}_{i+1}P$$
for all $i = 1, \dots, n-1$, and 
$$\vphantom{P}_n P  \ot_{R_n} \vphantom{P}^{\rho}T_{n} \cong \vphantom{P}_{1}P\left< 1 \right>$$
In particular this implies that
$$ \vphantom{P}^{\rho}T_{n} \ot_{R_n} P_{i+1} \cong P_{i}$$
for all $i = 1, \dots, n-1$, and 
$$ \vphantom{P}^{\rho}T_{n} \ot_{R_n} P_1 \cong P_{n}\left< 1 \right>.$$
\end{lem}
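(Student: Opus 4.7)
The plan is to verify each isomorphism by writing an explicit module map on a generating family and then checking that it preserves the trigrading. The first and second gradings are preserved automatically: $t_\rho$ preserves them and the bimodule structures of $T_n^\rho$ and $\vphantom{P}^{\rho}T_n$ differ from the regular bimodule $R_n$ only by $t_\rho$. So only the third grading requires a genuine check, and the ``shufflings'' in the definitions of $T_n^\rho$ and $\vphantom{P}^{\rho}T_n$ were tailored precisely to make that check work.

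For the bimodule isomorphism $T_n^\rho \ot_{R_n} \vphantom{P}^{\rho}T_n \cong R_n$ I would set
$$\phi \colon a^\rho \ot \vphantom{P}^\rho b \longmapsto ab,$$
with candidate inverse $\psi(c) = c^\rho \ot \vphantom{P}^\rho 1$. The tensor-product balancing reads $(a\,t_\rho(r))^\rho \ot \vphantom{P}^\rho b = a^\rho \ot \vphantom{P}^\rho(t_\rho(r)\, b)$, which shows simultaneously that $\phi$ is well-defined and, by substituting $r = t_\rho^{-1}(b)$, that $(ab)^\rho \ot \vphantom{P}^\rho 1 = a^\rho \ot \vphantom{P}^\rho b$, so $\psi\circ\phi = \id$. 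The bimodule property is immediate from the two formulas defining the left and right actions. It then remains to check, for every basis element $c$ of $R_n$ starting at vertex $k$, the third-degree identity
$$\deg\nolimits_3^{T_n^\rho}(c) \;+\; \deg\nolimits_3^{^\rho T_n}((k)) \;=\; \deg\nolimits_3^{R_n}(c).$$
Only the basis elements $(1)$, $(2|1)$, $(n|1)$, $(1|n)$, $(1|2)$, $(1|2|1)$ carry a nontrivial third degree in any of the three bimodules, so this reduces to a short finite case check. The symmetric isomorphism $\vphantom{P}^{\rho}T_n \ot_{R_n} T_n^\rho \cong R_n$ is proved the same way.

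For the left-module identities I would use the canonical identification $T_n^\rho \ot_{R_n} R_n(i) \cong T_n^\rho \cdot (i)$ provided by projectivity of $R_n(i)$. The twisted right action gives $a^\rho \cdot (i) = (a\,(i+1))^\rho$, so as a plain left $R_n$-module $T_n^\rho \cdot (i)$ coincides with $R_n(i+1) = P_{i+1}$. The third gradings on the four basis elements of $R_n(i+1)$ then need to be compared. For $i \in \{1,\dots,n-1\}$ the only element that can carry a nontrivial third degree is $(1|n)$, which arises when $i+1 = n$; it has degree $-1$ in both $T_n^\rho$ and $R_n$, so no shift is needed. For $i = n$, the basis $(1), (n|1), (2|1), (1|2|1)$ of $R_n(1)$ has $T_n^\rho$-degrees $(-1, 0, -1, -1)$ against $R_n$-degrees $(0, 1, 0, 0)$: a uniform shift of $-1$, giving the claimed $\langle -1 \rangle$. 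The right-module identities are obtained by the same argument with $\cdot(i)$ replaced by $(i)\cdot$ and the special list of $\vphantom{P}^{\rho}T_n$ in place of that of $T_n^\rho$; the final two formulas about $\vphantom{P}^{\rho}T_n \ot P_i$ follow formally by tensoring the previous formulas on the left with $\vphantom{P}^{\rho}T_n$ and invoking the first bimodule isomorphism. The only real obstacle is the combinatorial bookkeeping of which paths are special in each twisted bimodule, but the special lists are short and the identities drop out at once.
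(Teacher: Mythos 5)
Your overall route is the one the paper intends (the paper simply says the lemma ``can be verified by simple computations'' and omits them): the untwisted maps are the obvious ones, and the whole content is the third-grading bookkeeping. Your treatment of the left- and right-module statements is correct, including the identification $T_n^\rho\ot_{R_n}P_i\cong T_n^\rho\cdot(i)=R_n(i+1)$ and the uniform shift $\langle -1\rangle$ when $i=n$, and the final two isomorphisms do follow formally as you say.

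There is, however, a genuine error in the one identity you propose to check for the bimodule isomorphism. Since $\psi(c)=c\ot 1$ and the balancing relation reduces this to $c\ot(l)$ where $l$ is the \emph{terminal} vertex of $c$ (because $c\ot(j)=(c\cdot(j-1))\ot(j)=c(j)\ot(j)$, which vanishes unless $c$ ends at $j$), the condition to verify is
$$\deg\nolimits_3^{T_n^\rho}(c)+\deg\nolimits_3^{{}^\rho T_n}\bigl((l)\bigr)=\deg\nolimits_3^{R_n}(c),\qquad l=\text{target of }c,$$
not the version with the source vertex $k$ that you wrote. As stated, your identity fails: for $c=(n|1)$ one gets $\deg_3^{T_n^\rho}((n|1))+\deg_3^{{}^\rho T_n}((n))=0+0=0$, whereas $\deg_3^{R_n}((n|1))=1$; it also fails for $c=(2|1)$. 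With the target vertex (where $\deg_3^{{}^\rho T_n}((l))=\delta_{l,1}$) the finite check does go through for all $4n$ basis paths, so the lemma holds and the fix is immediate --- but the check you describe, carried out literally, would not close. (Equivalently, if you prefer the source vertex, use the other reduction $1\ot c=(k)\ot c$ and the condition $\deg_3^{T_n^\rho}((k))+\deg_3^{{}^\rho T_n}(c)=\deg_3^{R_n}(c)$; your formula mixes the two reductions.)
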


\subsection{Categorical representation}

We consider $\cat_n$ to be the homotopy category of bounded cochain complexes of $R_n$--proj. Its Grothendieck ring is also isomorphic to $ \Z \left[ t^{\pm 1}, s^{\pm 1} \right] \ot \Z^{n}$, with basis the isomorphism classes of projectives, see \cite{Ro}. 

For all $i=1, \dots, n$, the two complexes $F_i $ and $F'_i$ of $R_n$--bimodules are defined as in \cite{KhS}:
\begin{align*}
F_i & : 0 \rightarrow  P_i \ot_{\Z} \vphantom{P}_iP \xrightarrow{d_i} R_n \rightarrow 0 \\
F'_i & : 0 \rightarrow  R_n \xrightarrow{d'_i}  P_i \ot_{\Z} \vphantom{P} _i P \{-1\}  \rightarrow 0 \\
\end{align*}
with $R_n$ sitting in cohomological degree zero and where the respective differentials of these length one complexes are:
\begin{align*}
d_i ((i)\ot(i)) & = (i) \\
d'_i (1) & = (i-1|i)\ot(i|i-1) +  (i+1|i)\ot(i|i+1) \\
 & + (i)\ot(i|i-1|i) + (i|i-1|i)\ot(i)\\
\end{align*}
where the integers again have to be understood modulo $n$.

Consider also the two following complexes of $R_n$--bimodules of length zero concentrated in cohomological degree zero:
\begin{align*}
F_{\rho} & : 0 \rightarrow  T_{n}^{\rho} \rightarrow 0 \\
F'_{\rho} & : 0 \rightarrow  \vphantom{P}^{\rho}T_{n}  \rightarrow 0 \\
\end{align*}

\begin{rem}\label{catetc}
Consider the functors 
$$\Fu_i = F_{i}  \ot_{R_n} -, \quad \Fu'_i = F'_{i}  \ot_{R_n} -, \quad \Fu_{\rho} = F_{\rho} \ot_{R_n} - \ \mbox{and} \ \Fu'_{\rho} = F'_{\rho} \ot_{R_n} -.$$
Since the bimodules $P_i \ot_{\Z} \vphantom{P}_iP$, $T_{n}^{\rho}$ and $\vphantom{P}^{\rho}T_{n}$ are projective as left modules, the former functors are well-defined endofunctors of the category $\cat_n$. Plus these bimodules being also projective as right modules, these functors are actually exact. Hence they induce linear maps on the Grothendieck ring $K\left(\cat_n\right)$. 
\end{rem}

\begin{prop}
 \hfill
\begin{itemize}
  \item[(i)] There is a weak action of the braid group $\hat{\B}_{\hat{A}_{n-1}}$ on $\cat_n$ given on the generators $\sigma_i$ by the functors $\Fu_{i}$, on their inverses $\sigma_i^{-1}$ by the functors $\Fu'_{i}$, on the generator $\rho$ by the functor $ \Fu_{\rho}$, on its inverse $\rho^{-1}$ by the functor $ \Fu'_{\rho} $ and on any braid word $\sigma$ by the functor $\Fu_{\sigma}$ consisting in tensoring on the left by the tensor product of the complexes associated to the generators appearing in the braid word.
  \item[(ii)] This action induces a linear representation $\rho_{AKS}$ of $\hat{\B}_{\hat{A}_{n-1}}$ on the Grothendieck ring $K\left(\cat_n\right) \cong  \Z \left[ t^{\pm 1} , s^{\pm 1}\right]^{n}$ which is given in the basis $ \mathbf{f} = \left\{ \left[ P_i \right], i = 1, \dots,n \right\}$ of $K\left(\cat_n\right)$ by:
\begin{align*}
\rho_{AKS}(\sigma_i) & = 
\begin{pmatrix}
I_{i-2} & 0 & 0 & 0 & 0\\
0 & 1 & 0 & 0 & 0\\
0 & 1 & -t & -t & 0 \\
0 & 0 & 0 & 1 & 0 \\
0 & 0 & 0 & 0 & I_{n-i-1} \\
\end{pmatrix} \qquad \text{for } i = 2, \dots n-1,\\
\rho_{AKS}(\sigma_{1}) & = 
\begin{pmatrix}
-t & t & 0 & s^{-1}  \\
0 & 1 & 0 & 0 \\
0 & 0 & I_{n-3} & 0 \\
0 & 0 & 0 & 1 \\
\end{pmatrix} \\
\rho_{AKS}(\sigma_{n}) & = 
\begin{pmatrix}
1 & 0 & 0 & 0 \\
0 & I_{n-3} & 0 & 0 \\
0 & 0 & 1 & 0 \\
ts & 0 & 1 & -t  \\
\end{pmatrix} \\
\rho_{AKS}(\rho) & = 
\begin{pmatrix}
 0 & s^{-1} \\
I_{n-1} & 0 \\
\end{pmatrix} \\
\rho_{AKS}(\rho^{-1}) & = 
\begin{pmatrix}
 0 & I_{n-1} \\
s & 0 \\
\end{pmatrix} \\
\end{align*}
\item[(iii)] The linear representation obtained by decategorification $\rho_{AKS}$ and the homological linear representation $\rho_{RH}$ are related as follows:
$$\rho_{AKS\vert s = q^{-1}t^{-n}} =  \rho_{RH}$$
\end{itemize}
\end{prop}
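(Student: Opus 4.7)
The three assertions are essentially independent in character: $(i)$ is a categorical verification of braid relations, $(ii)$ is a decategorification computation, and $(iii)$ is a direct comparison of matrices.

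For $(i)$, the approach is to verify all defining relations of $\hat{\B}_{\hat{A}_{n-1}}$ at the level of complexes of $R_n$-bimodules up to homotopy equivalence. Invertibility $\Fu_i \circ \Fu'_i \simeq \id \simeq \Fu'_i \circ \Fu_i$, the distant commutation relations \eqref{BEA1}, and the braid relation \eqref{BEA2} are proved exactly as in Khovanov-Seidel \cite{KhS}: these relations involve only local interactions between adjacent vertices of the quiver, and the relevant piece of the cyclic quiver $\Gamma_n$ is isomorphic to the corresponding piece of the finite type $A$ quiver. The explicit null-homotopies constructed in \cite{KhS} transport verbatim, including at the junction $i=n$, $i+1=1$. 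Invertibility of $\Fu_\rho$ is the first assertion of Lemma \ref{isostrig}. The genuinely new ingredient is the conjugation relation \eqref{BEA3}, i.e.\ $\Fu_\rho \circ \Fu_i \circ \Fu'_\rho \simeq \Fu_{i+1}$. For this we compute $T_n^\rho \otimes_{R_n} (P_i \otimes_\Z {}_iP) \otimes_{R_n} {}^\rho T_n$ using Lemma \ref{isostrig}: the left tensor factor sends $P_i \mapsto P_{i+1}$ and the right factor sends ${}_iP \mapsto {}_{i+1}P$ (with compensating grading shifts when $i=n$). Since the automorphism $t_\rho$ sends the idempotent $(i)$ to $(i+1)$, the differential $d_i$ is carried precisely to $d_{i+1}$ under this identification.

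For $(ii)$, the matrix entries are obtained by evaluating $[\Fu_g(P_j)]$ in the basis $\mathbf{f}$ for each generator $g$. The key observation is that ${}_iP \otimes_{R_n} P_j$ vanishes unless $j \in \{i-1, i, i+1\}$ mod $n$, and is otherwise one-dimensional with an explicit trigrading read off the corresponding path. When $j \notin \{i, i \pm 1\}$ the complex $\Fu_i(P_j)$ collapses onto $P_j$; when $j=i$ the contribution is $-t[P_i]$ after accounting for the grading shifts and the cohomological sign; when $j = i \pm 1$ one obtains the expected combination of $[P_i]$ and $[P_j]$ with the correct $t$-coefficient. The entries involving $s$, which appear only in the positions corresponding to the edges $(n|1)$ and $(1|n)$, reflect precisely the third-grading shift carried by those two arrows in the definition of $R_n$. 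For $\Fu_\rho$ the computation is immediate from the description of $T_n^\rho$ as a left $R_n$-module given in the remark after Lemma \ref{isostrig}, yielding the cyclic permutation matrix with an $s^{-1}$ in the top-right corner.

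For $(iii)$ one simply substitutes $s = q^{-1} t^{-n}$ in the matrices of $(ii)$ and compares, entry by entry, with the matrices of $\rho_{RH}$ obtained in Section \ref{rephomologique} (using the affine braid relations to compute $\rho_{RH}(\sigma_n)$). The substitution turns every occurrence of $s^{\pm 1}$ into $q^{\mp 1} t^{\mp n}$, matching the entries $t^n q$ in $\rho_{RH}(\sigma_1)$ and in the top-right corner of $\rho_{RH}(\rho)$; the remaining entries, being polynomial in $t$ alone, are identified directly after interpreting the parity grading through the sign $\epsilon$. The main obstacle throughout is the braid relation in $(i)$: assembling the bimodule isomorphisms and the differentials into an explicit chain homotopy between the two length-three complexes representing $\Fu_i \Fu_{i+1} \Fu_i$ and $\Fu_{i+1} \Fu_i \Fu_{i+1}$ is elementary but lengthy. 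By contrast, the conjugation relation, although new to the affine setting, reduces cleanly to Lemma \ref{isostrig} combined with the cyclic symmetry of the defining relations of $R_n$.
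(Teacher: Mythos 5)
Your proposal follows essentially the same route as the paper: point (i) is deduced from the Khovanov--Seidel verifications in finite type $A$ together with Lemma \ref{isostrig} (which handles both the invertibility of $\Fu_{\rho}$ and the conjugation relation), point (ii) by evaluating the functors on the basis classes $\left[P_j\right]$ with the grading shifts and the cohomological sign, and point (iii) by direct substitution. One small slip worth noting: $\vphantom{P}_iP \ot_{R_n} P_j = (i)R_n(j)$ is two-dimensional (not one-dimensional) when $j=i$, being spanned by $(i)$ and $(i|i-1|i)$, and it is precisely this second basis element that produces the $-t$ on the diagonal, so your stated contribution $-t\left[P_i\right]$ is nevertheless correct.
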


\begin{proof}
The first point $(i)$ follows from the results obtained by Khovanov and Seidel in the finite type $A$ case and from Lemma \ref{isostrig}.

The second point $(ii)$ is easy computations. Let us detail the case of $\sigma_1$ and $\rho$. The linear map $\rho_{AKS} (\sigma_1)$ is given by $  \left[ F_{1}  \ot_{R_n} - \right] =  \left[ Id \right]  - \left[P_{1} \ot_{\Z} \vphantom{P}_{1}P  \ot_{R_n} - \right] $. Hence the image of the basis $\mathbf{f} = \left\{ \left[ P_i \right], i = 1, \dots,n \right\}$ of $K\left(\cat_n\right)$ is
\begin{align*}
 \rho_{AKS} (\sigma_1)( \left[ P_1 \right]) & = \left[ P_1 \right] - \left[P_{1} \ot_{\Z} \vphantom{P}_{1}P  \ot_{R_n} P_1 \right] \\
& = \left[ P_1 \right] - \left[P_{1} \ot_{\Z} (1) \right] - \left[P_{1} \ot_{\Z} (1 | 2 | 1) \right] \\
& = \left[ P_1 \right] - \left[P_{1}\right] - t \epsilon^2 \left[P_{1} \right] \\
& = - t \left[P_{1} \right] \\
 \rho_{AKS} (\sigma_1)( \left[ P_2 \right]) & = \left[ P_2 \right] - \left[P_{1} \ot_{\Z} \vphantom{P}_{1}P  \ot_{R_n} P_2 \right] \\
& = \left[ P_2 \right] - \left[P_{1} \ot_{\Z} (1 | 2 ) \right] \\
& = \left[ P_2 \right]  - t \epsilon \left[P_{1} \right] \\
& = \left[ P_2 \right] + t \left[P_{1} \right] \\
 \rho_{AKS} (\sigma_1)( \left[ P_i \right]) & = \left[ P_i \right] - \left[P_{1} \ot_{\Z} \vphantom{P}_{1}P  \ot_{R_n} P_i \right] \\
& = \left[ P_i \right] \qquad \text{for } i = 3, \dots n-1 \\
 \rho_{AKS} (\sigma_1)( \left[ P_n \right]) & = \left[ P_n \right] - \left[P_{1} \ot_{\Z} \vphantom{P}_{1}P  \ot_{R_n} P_n \right] \\
& = \left[ P_n \right] - \left[P_{1} \ot_{\Z} (1 | n ) \right] \\
& = \left[ P_n \right]  - s^{-1} \epsilon \left[ P_{1} \right] \\
& = \left[ P_n \right] + s^{-1} \left[ P_{1} \right] 
\end{align*}
On the other hand, the linear map $\rho_{AKS} (\rho)$ is given by $  \left[ F_{\rho} \ot_{R_n} - \right] = \left[ T_{n}^{\rho}  \ot_{R_n} - \right] $, acting on our basis elements as follows (see Lemma \ref{isostrig}):
\begin{align*}
 \rho_{AKS} (\rho)( \left[ P_i \right]) & = \left[ T_{n}^{\rho}  \ot_{R_n} P_i \right] \\
& = \left[ P_{i+1} \right] \qquad \text{for } i = 1, \dots n-1 \\
 \rho_{AKS} (\rho)( \left[ P_n \right]) & = \left[ T_{n}^{\rho}  \ot_{R_n} P_n \right] \\
& = \left[ P_{1} \left< -1 \right> \right]\\
& = s^{-1} \left[ P_{1} \right].\\
\end{align*}

The third point $(iii)$ is trivial.
\end{proof}

\section{Geometric and trigraded intersection numbers}

In this section, the objects considered and their properties are modelled on the ones Khovanov and Seidel introduced in the Section $3$ of \cite{KhS}, but in a slightly more general setting. In order for this paper to be self-contained, we give here the (variants of) their definitions and results and precise the proofs when theirs do not apply straightforwardly to our affine setting.

\subsection{Geometric intersection numbers.}

The curves we will consider are oriented, though sometimes we will forget about their orientation. They are either simple and closed or the image of the embedding of a segment whose endpoints are sent to a pair of distinct marked points or points on the boundary of $\D$. Two curves are isotopic if one is the image of the other under an element of the mapping class group $\MCG(\D, n+1, \{ 0 \})$, isotopy of curves is denoted by $\simeq$. Two curves are said to have minimal intersection if their intersection points do not form an empty bigon (i.e. containing no marked point) unless these two intersection points are marked points. If $c_0$ and $c_1$ do not have minimal intersection, one can always find $c_1' \simeq c_1$ such that $c_0$ and $c_1'$ have minimal intersection. Hence we can define geometric intersection numbers as follows:
\begin{equation}\label{geomintnumb}
I(c_0,c_1)  = 
\begin{cases}
2 \ \text{if $c_0$, $c_1$ are closed and isotopic,}\\
\# \{c_0 \cap c_1' \backslash \Delta + \frac{1}{2} \# \{c_0 \cap c_1' \cap \Delta \} \\
\text{if $c_0$, $c_1$ do not intersect on $\partial \D$,}\\
I(c_0^+,c_1) \ \text{otherwise,}
\end{cases}
\end{equation}
where $c_0^+$ is obtained by slightly pushing $c_0$ along the flow of a vector field $Z$ that is obtained by extending a positively oriented vector field on $\partial \D$ to a smooth vector field on $\D$ that vanishes on $\Delta$.

\begin{rem}\label{syminvgeomintnb}
Note that in the two first cases, the geometric intersection number is symmetric, which is no longer true in the third case. Note also that the geometric intersection numbers are mapping class-invariant i.e. for any mapping class $\sigma$ and curves $c_0, c_1$, we have
$$I(\sigma(c_0),\sigma(c_1))= I(c_0,c_1).$$

\end{rem}

Consider the basic set of curves $b_i$ depicted on Figure \ref{fig:arcsbi}. A curve $c$ is called admissible if there exist a mapping class $\sigma$ and $i \in \{ 1 , \dots, n\}$ such that $c=\sigma(b_i)$. Conversely any curve whose endpoints lie in $\{ 1 , \dots, n\}$ is admissible.

\begin{figure}[htbp]
  \begin{center}
  \psfrag{0}{$\scriptscriptstyle{0}$}
  \psfrag{1}{$\scriptscriptstyle{1}$}
  \psfrag{2}{$\scriptscriptstyle{2}$}
  \psfrag{n}{$\scriptscriptstyle{n}$}
  \psfrag{i}{$\scriptscriptstyle{i}$}
  \psfrag{i+1}{$\scriptscriptstyle{i+1}$}
  \psfrag{ai}{$\scriptscriptstyle{b_i}$}
  \psfrag{an}{$\scriptscriptstyle{b_n}$}
  \psfrag{a1}{$\scriptscriptstyle{b_1}$}
   \includegraphics[height=4cm]{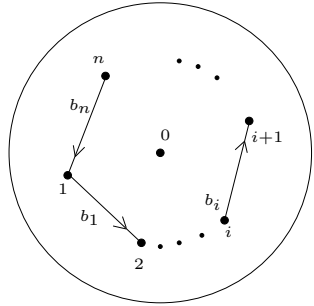} 
   \caption{The arcs $b_i$}
   \label{fig:arcsbi}
  \end{center}
\end{figure}

From now on and until mention of the contrary, we forget the orientation of the curves.

\begin{lem}\label{trivact->center}
If a mapping class $\sigma \in \MCG(\D, n+1, \{ 0 \})$ satisfies $\sigma (b_i) \simeq b_i$ for all $i = 1, \dots, n$, then $\sigma$ is in $\left< \rho^n \right>$, the center of $\MCG(\D, n+1, \{ 0 \})$.
\end{lem}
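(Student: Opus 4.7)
The plan is to cut $\D$ along the polygonal closed curve $\gamma = b_1 \cup \cdots \cup b_n$ (whose vertices are the punctures $1, \dots, n$ and which separates $0$ from $\partial \D$) and analyze $\sigma$ on each resulting piece by combining Alexander's trick with the classical MCG computation for annuli.

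First I would show that $\sigma$ fixes every puncture pointwise. By hypothesis $\sigma(0) = 0$, and $\sigma(b_i) \simeq b_i$ forces $\sigma$ to permute the endpoint set $\{i, i+1\}$ of $b_i$. Intersecting with the permutation of $\{i-1, i\}$ coming from $\sigma(b_{i-1}) \simeq b_{i-1}$ gives $\sigma(i) = i$ for every $i = 1, \dots, n$ (using $n \geq 3$, so that $\{i-1,i\} \cap \{i,i+1\} = \{i\}$). Thus $\sigma$ fixes all of $\Delta$ pointwise. Using in addition that the arcs $b_i$ are pairwise disjoint away from $\Delta$, standard two-dimensional topology provides a representative homeomorphism $\phi$ of the mapping class $\sigma$ that fixes $\partial \D$ and every $b_i$ pointwise.

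Cutting $\D$ along $\gamma$ then produces an inner once-punctured disk $D_{\mathrm{in}}$ (containing the puncture $0$, with $\partial D_{\mathrm{in}} = \gamma$) and an outer unpunctured annulus $A_{\mathrm{out}}$ (with $\partial A_{\mathrm{out}} = \gamma \sqcup \partial \D$). The homeomorphism $\phi$ restricts to self-homeomorphisms of each piece fixing all boundary components pointwise. The mapping class group of a once-punctured disk with pointwise-fixed boundary is trivial, so $\phi|_{D_{\mathrm{in}}}$ is isotopic to the identity. The mapping class group of an annulus with pointwise-fixed boundary is $\Z$, generated by a Dehn twist $t_\delta$ along a core $\delta$ of $A_{\mathrm{out}}$, which in $\D$ is a simple closed curve enclosing $\{1, \dots, n\}$ and separating them from $\{0\}$. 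Hence $\phi|_{A_{\mathrm{out}}}$ is isotopic to $t_\delta^k$ for some $k \in \Z$, and reassembling the two pieces gives $\sigma = t_\delta^k$ in $\MCG(\D, n+1, \{0\})$.

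The final step is to match this Dehn twist with $\rho^n$. By construction $\rho = t_\partial$ is the $\frac{1}{n}$-twist supported on a disk $D'$ enclosing $\{1, \dots, n\}$, so its $n$-th power is precisely the full Dehn twist along $\partial D'$, which is isotopic to $\delta$. Therefore $\sigma = \rho^{nk}$, which lies in the center $\langle \rho^n \rangle$. The main technical hurdle is this last identification: one must carefully verify that the Dehn twist produced by the annulus cut is indeed the center generator $\rho^n$ (up to sign), which amounts to tracking how the damped fractional rotation $t_\partial$ iterates $n$ times to exactly one full Dehn twist at the boundary of its support disk. The earlier steps reduce to routine applications of Alexander's trick and of the dictionary between arc systems and surface MCGs.
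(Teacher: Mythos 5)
Your argument follows essentially the same route as the paper's proof: the paper likewise observes that $\sigma$ restricts to the identity on an annular neighbourhood of $b_1\cup\dots\cup b_n$, invokes the triviality of the mapping class group of the once-punctured disk to conclude that $\sigma$ is the identity on a disk enclosing all of $\{0,\dots,n\}$, and deduces that $\sigma$ is supported in the remaining outer annulus, hence equals $t_{\partial}^{pn}=\rho^{pn}$. Your preliminary normalisations (fixing the punctures, realising $\sigma$ by a homeomorphism fixing each $b_i$ pointwise) and the cut into $D_{\mathrm{in}}$ and $A_{\mathrm{out}}$ are a more detailed version of the same argument.

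One slip in the last step should be corrected. In the affine configuration the polygon $\gamma=b_1\cup\dots\cup b_n$ encircles the puncture $0$, so the core $\delta$ of the outer annulus $A_{\mathrm{out}}$ is boundary-parallel: it encloses \emph{all} of $\{0,1,\dots,n\}$ and does not separate $\{1,\dots,n\}$ from $\{0\}$. Likewise the support of $\rho=t_{\partial}$ is a disk containing $0$ (which is the fixed centre of the rotation). As literally written, your $\delta$ would bound a disk containing $\{1,\dots,n\}$ but not $0$; the Dehn twist along such a curve is the full twist of the finite subgroup $\langle\sigma_1,\dots,\sigma_{n-1}\rangle$, which is neither central in $\hat{\B}_{\hat{A}_{n-1}}$ nor equal to $\rho^n$, so the identification $t_{\delta}=\rho^n$ would fail for that curve. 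With the description corrected, the identification is exactly the classical fact that the boundary Dehn twist of the $(n+1)$-punctured disk equals $(\sigma_0^2\sigma_1\cdots\sigma_{n-1})^{n}=(\sigma_0\sigma_1\cdots\sigma_{n-1})^{n+1}$, i.e.\ $\rho^n$, and your proof then agrees with the paper's.
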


Recall that under the isomorphism $ \MCG(\D, n+1, \{ 0 \}) \simeq \hat{\B}_{\hat{A}_{n-1}} $, the mapping class $t_{\partial}$ corresponds to the braid $\rho$ so we might use the two notations indistincly.

\begin{proof} 
The restriction of such a mapping class to an annulus enclosing the punctures $\{1, \dots, n\}$ is equal to the identity. Moreover the mapping class group of a one-punctured disk being trivial, it actually implies that $\sigma$ restricted to a disk enclosing the punctures $\{0, \dots, n\}$ is the identity. Hence $\sigma$ is equal to $t_{\partial}^{pn}$ for some $p \in \Z$.
\end{proof}

\begin{lem}\label{geomintnb}
If $c$ is admissible and there exists $i\in \{1, \dots, n\}$ such that $I(b_j,c) = I(b_j,b_i)$ for all $j = 1, \dots, n$, then $c$ is isotopic either to $b_i$, to $a_i$ or to $a_i'$, see Figure \ref{fig:curvesai}.
\end{lem}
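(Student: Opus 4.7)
The plan is to translate the intersection-number hypotheses into combinatorial constraints on the endpoints and interior crossings of $c$, and then finish by a planar-topology classification of simple arcs in the complement of $\bigcup_j b_j$.

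First I would compute the intersection numbers $I(b_j,b_i)$ directly from \eqref{geomintnumb}: since $b_i$ meets $b_i$ only at its two endpoints $i,i+1$ and $b_{i\pm 1}$ shares exactly one endpoint with $b_i$ while all other $b_j$ are disjoint, one gets $I(b_i,b_i)=1$, $I(b_{i\pm 1},b_i)=\tfrac12$, and $I(b_j,b_i)=0$ for $j\not\equiv i,i\pm 1$. Each endpoint of $c$ at puncture $p$ contributes $\tfrac12$ to $I(b_j,c)$ precisely for $j\in\{p-1,p\}$, so the vanishing of $I(b_j,c)$ for $j\notin\{i-1,i,i+1\}$ forces both endpoints of $c$ to lie in $\{i,i+1\}$. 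Since $c$ is admissible, its endpoints are distinct and equal to $i$ and $i+1$. Then the endpoint contributions already saturate $I(b_{i\pm 1},c)=\tfrac12$ and $I(b_i,c)=\tfrac12+\tfrac12=1$, so a minimal-position representative of $c$ has no interior intersection with any $b_j$.

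Next I would exploit the planar topology of the configuration. The arcs $b_1\cup\cdots\cup b_n$ form a simple closed polygon passing through the punctures $1,\dots,n$ and enclosing puncture $0$, so its complement in $\D$ has two components: the \emph{interior}, a once-punctured open disk containing $0$, and the \emph{exterior}, a topological annulus bounded by the polygon and $\partial\D$ with no puncture inside. By Step~2, $c$ lies in the closure of one of these two regions, with endpoints $i,i+1$ on the polygon.

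Finally I would classify the simple arcs in each region. In the exterior annulus, a standard cutting argument shows that a simple arc with both endpoints $i,i+1$ on the inner boundary is isotopic to exactly one of the two subarcs of the polygon from $i$ to $i+1$; this yields two classes, the ``short'' one isotopic to $b_i$ in $\D$ and the ``long'' one going around the polygon, which I would identify with $a_i'$. In the interior once-punctured disk, a simple arc from $i$ to $i+1$ is determined up to isotopy by which side the puncture $0$ lies on, giving two classes: one in which $0$ is on the ``long'' side (isotopic to $b_i$ in $\D$) and one in which $0$ is on the $b_i$-side (identified with $a_i$). Pooling the four local classes yields the three candidates $b_i$, $a_i$, $a_i'$. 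To conclude I would verify pairwise non-isotopy in $\D\setminus\Delta$ via the standard bigon criterion: both $a_i\cup b_i$ and $a_i'\cup b_i$ bound a disk containing only puncture $0$, while $a_i\cup a_i'$ bounds a region containing all punctures $\{1,\dots,n\}\setminus\{i,i+1\}$ lying on the complementary polygonal arc. The main obstacle is the annulus classification: establishing that simple arcs between two inner-boundary points of an annulus form exactly two isotopy classes (no winding around the outer boundary is possible without self-intersection), which I would justify by a direct cut-and-paste argument or by invoking standard surface topology.
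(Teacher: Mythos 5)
Your proposal is correct and follows essentially the same route as the paper: both arguments extract from the hypotheses that $c$ must have the same endpoints $i,i+1$ as $b_i$ and be otherwise disjoint from every $b_j$, and then conclude by classifying such arcs in the complement of the polygon $b_1\cup\cdots\cup b_n$ (a step the paper states in one line and you spell out via the interior once-punctured disk versus the exterior annulus). The only caveat is that for $n=3$ the index set $\{j : j\notin\{i-1,i,i+1\}\}$ is empty, so the location of the endpoints must instead be read off from $I(b_i,c)=1$ together with your endpoint-contribution count --- a patch your bookkeeping already supplies (and a point the paper's own appeal to $I(b_{i\pm 2},c)=0$ also glosses over).
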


\begin{figure}[htbp]
  \begin{center}
  \psfrag{0}{$\scriptscriptstyle{0}$}
  \psfrag{1}{$\scriptscriptstyle{1}$}
  \psfrag{2}{$\scriptscriptstyle{2}$}
  \psfrag{n}{$\scriptscriptstyle{n}$}
  \psfrag{i}{$\scriptscriptstyle{i}$}
  \psfrag{i+1}{$\scriptscriptstyle{i+1}$}
  \psfrag{ai}{$\scriptscriptstyle{a_i}$}
  \psfrag{ai2}{$\scriptscriptstyle{a_i'}$}
  \psfrag{an}{$\scriptscriptstyle{b_n}$}
  \psfrag{a1}{$\scriptscriptstyle{b_1}$}
   \includegraphics[height=4cm]{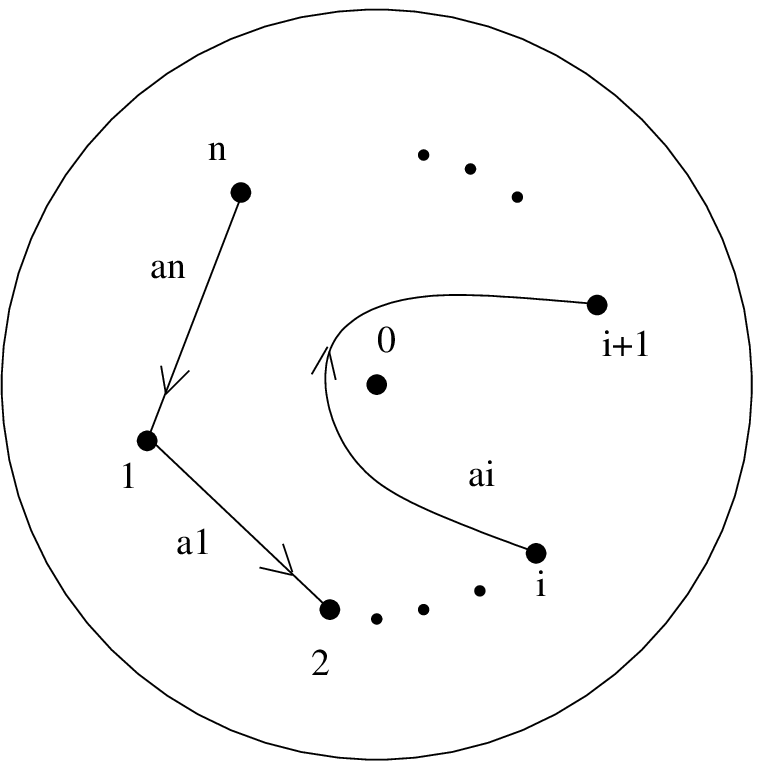} 
  \hspace{1cm}
  \includegraphics[height=4cm]{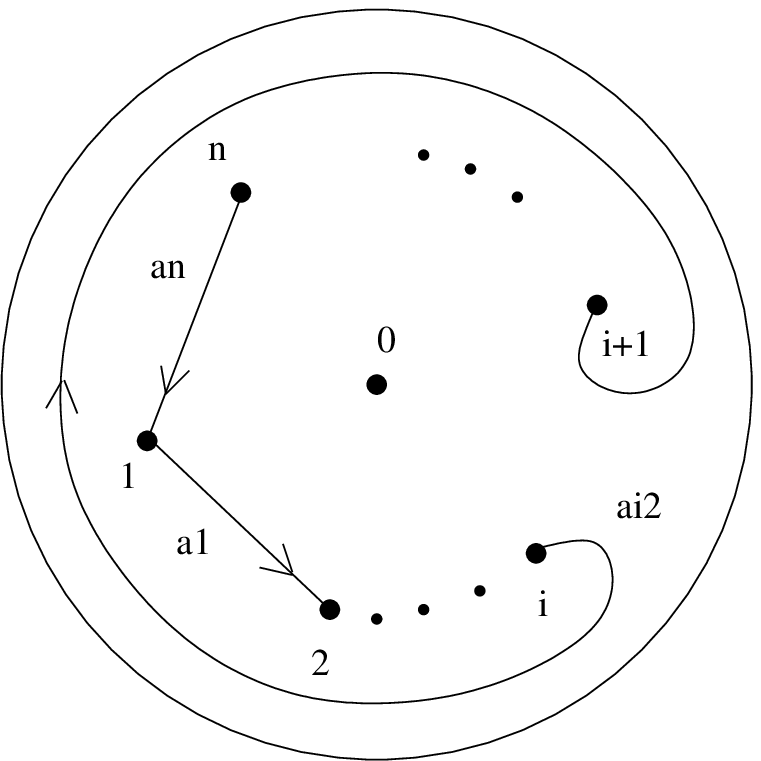} 
\caption{The curves $a_i$ and $a_i'$}
   \label{fig:curvesai}
  \end{center}
\end{figure}

\begin{proof}
Since $I(b_{i\pm1}, c) = I((b_{i\pm1}, b_i)= 1/2$, the curves $c$ and $b_{i\pm1}$ have exactly one endpoint in common and do not intersect else. Moreover, the assumption $I(b_{i\pm2}, c) = I((b_{i\pm2}, b_i)= 0$ implies that these latter endpoints must be the ones of $b_i$. Again these two points are the only intersection points in between $c$ and $b_i$. Finally the nullity of the remaining intersection numbers forces $c$ to also have no intersection with the $b_j$ for $ j \neq j, j\pm 1$. Hence $c$ is necessarly isotopic to $b_i$, $a_i$ or $a_i'$.
\end{proof}

\begin{rem} The curve $a_i$ is isotopic to $\rho^i \sigma_0^2 \rho^{-i}$ while the curve $a_i'$ is isotopic to $\rho^{i+1} \sigma_{n-1} \dots \sigma_2 \sigma_1 \rho^{-i}$.
\end{rem}


\begin{lem}\label{geomintnb2}
If a mapping class $\sigma \in \MCG(\D, n+1, \{ 0 \})$ satisfies $I(b_j,\sigma (b_i)) = I(b_j,b_i)$ for all $j,i = 1, \dots, n$, then $\sigma = \rho^{pn}$ for some $p\in \Z$.
\end{lem}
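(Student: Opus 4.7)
The plan is to reduce the statement to Lemma \ref{trivact->center}. That lemma says: if $\sigma(b_i) \simeq b_i$ for every $i=1,\dots,n$, then $\sigma \in \langle \rho^n \rangle$. So it suffices to prove that under the hypothesis on intersection numbers, each $\sigma(b_i)$ is in fact isotopic to $b_i$ (and not to the other two candidates $a_i$ or $a_i'$).

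First I would observe that for every $i$, the curve $c_i := \sigma(b_i)$ is admissible (as the image of an admissible curve under a mapping class) and satisfies $I(b_j, c_i) = I(b_j, b_i)$ for all $j$ by hypothesis. Lemma \ref{geomintnb} then forces $c_i$ to be isotopic to one of $b_i$, $a_i$, or $a_i'$. So there is a well-defined choice function $i \mapsto \{b_i, a_i, a_i'\}$ recording the isotopy class of $\sigma(b_i)$.

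Next I would rule out the cases $a_i$ and $a_i'$ by a global compatibility argument. Since $\sigma$ is a homeomorphism, the arcs $\sigma(b_1),\dots,\sigma(b_n)$ can be chosen simultaneously and pairwise disjoint off the punctures; moreover their union is the image under $\sigma$ of the Jordan-type cycle $b_1 \cup \cdots \cup b_n$, whose complement in $\D$ is a union of a small disk around puncture $0$ and a connected region containing $\partial \D$. Because $\sigma$ fixes $0$ and is orientation-preserving, $\sigma(b_1 \cup \cdots \cup b_n)$ must still cobound a disk around puncture $0$ with the same cyclic arrangement of endpoints $1,2,\dots,n$ inherited from $\partial \D$. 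However the curves $a_i$ and $a_i'$ both wind around puncture $0$ in a way incompatible with this configuration: inserting $a_i$ (respectively $a_i'$) in place of $b_i$ into the cycle either forces an extra intersection with the adjacent arc $\sigma(b_{i\pm 1})$ (which must itself be one of $b_{i\pm 1}$, $a_{i\pm 1}$, $a_{i\pm 1}'$), or reverses the local cyclic orientation around puncture $0$. I would carry this out by inspecting the three possible neighboring pairs at a shared puncture, using the pictures in Figure \ref{fig:arcsbi} and Figure \ref{fig:curvesai}, and concluding that the only globally consistent choice is $\sigma(b_i) \simeq b_i$ for every $i$.

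Once this is established, Lemma \ref{trivact->center} immediately yields $\sigma \in \langle \rho^n \rangle$, i.e.\ $\sigma = \rho^{pn}$ for some $p \in \Z$. The main obstacle is precisely the compatibility step: the intersection numbers $I(b_j,-)$ alone cannot distinguish $b_i$ from $a_i$ or $a_i'$ individually, and the exclusion must come from the joint geometry of the family $\{\sigma(b_i)\}_{i=1}^n$ together with orientation-preservation and the fact that $\sigma$ fixes the puncture $0$.
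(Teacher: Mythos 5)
Your overall strategy coincides with the paper's: apply Lemma~\ref{geomintnb} to each $\sigma(b_i)$, exclude the alternatives $a_i$ and $a_i'$ by a global topological argument, and finish with Lemma~\ref{trivact->center}. The ingredients you list (pairwise disjointness of the images off the punctures, orientation-preservation, the fact that $\sigma$ fixes the puncture $0$ and hence the disk around it) are exactly the ones the paper uses; in particular your disjointness remark correctly rules out two indices both going to $a$-type (or both to $a'$-type), since $a_i$ and $a_j$ have geometric intersection number $2$ or $3/2$ while the images $\sigma(b_i),\sigma(b_j)$ meet only at shared punctures.

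One caution about the remaining exclusion step: you propose to carry it out ``by inspecting the three possible neighboring pairs at a shared puncture'', but a purely local inspection cannot rule out the configuration in which exactly one $b_i$ is sent to $a_i$ and every other $b_j$ to $b_j$. That family is realizable by pairwise disjoint arcs with all the prescribed intersection numbers and with unexceptional local pictures at each shared puncture; what kills it is global: the concatenation $c=b_n\cdots b_1$ is a closed curve bounding a disk containing exactly the marked point $0$, whereas after the single substitution the corresponding closed curve bounds an empty disk, contradicting $\sigma(0)=0$. Moreover the mixed case $\sigma(b_i)\simeq a_i$ and $\sigma(b_j)\simeq a_j'$ passes even this enclosure test and is only excluded by comparing winding numbers around $0$ (equal to $1$ for $c$ but $-1$ for $\sigma(c)$), which requires reinstating orientations on the arcs of Figure~\ref{fig:curvesai}. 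These are precisely the second, third and fourth bullets of the paper's proof; your dichotomy ``extra intersection or reversed local cyclic orientation'' gestures at them but must be replaced by these two global computations for the argument to close.
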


\begin{proof}
By Lemma \ref{geomintnb}, the curves $\sigma(b_i)$ are all isotopic to some explicit predicted curves. If there exist a $b_i$ such that $\sigma(b_i) \simeq a_i$ (resp. $\simeq a_i'$) then no other $b_j$ is such that $\sigma(b_j) \simeq a_j$ (resp. $\simeq a_j'$). Indeed if such a $b_j$ existed then we would have $I(\sigma(b_j),\sigma (b_i)) = 2$ if $j \neq i\pm1$ or $3/2$ if $j = i\pm1$. This contradicts the fact that $I(b_j,b_i) = 0$ if $j \neq i\pm1$ or $1/2$ if $j = i\pm1$ and the invariance of geometric intersection numbers, see Remark \ref{syminvgeomintnb}. Now let us rule some of the remaining possibilities out:

\noindent $\bullet$ Suppose $\sigma(b_i) \simeq a_i$ and $\sigma(b_j) \simeq b_j$ for all $j \neq i$. Consider the closed curve $c$ consisting in the concatenation $b_n \dots b_1$. The curve $c$ borders a disk with one marked point, hence so should do the image of $c$ under any mapping class. But here $\sigma(c)$ borders an empty disk, so this case cannot occur.

\noindent $\bullet$ The case where $\sigma(b_i) \simeq a_i'$ and $\sigma(b_j) \simeq b_j$ for all $j \neq i$ can be ruled out using the exact same argument.

\noindent $\bullet$ Now we treat the case where there are two integers $i$ and $j$ such that $\sigma(b_i) \simeq a_i$ and $\sigma(b_j) \simeq a_j'$. In the two previous cases, we used in disguise the fact the winding number around the central puncture is preserved by mapping classes. To talk about winding numbers, we now have to remember the orientation of curves. Let $c$ be as before with the orientation induced by the ones of the $b_i$'s as in Figure \ref{fig:arcsbi}. For continuity reasons, if $k\neq i,j$ the curve $\sigma(b_k) \simeq b_k$ has to carry the same orientation as $b_k$ while $\sigma(b_i) \simeq a_i$ and $\sigma(b_j) \simeq a_j'$ have to be oriented as in Figure \ref{fig:curvesai}. This leads then to a contradiction, the curve $c$ having a winding number around the central puncture equal to $1$ while the one of $\sigma(c)$ is $-1$.

\noindent $\bullet$ So we are left with only one possibility which is that $\sigma(b_i) \simeq b_i$ for all $i=1, \dots, n$ and we use Lemma \ref{trivact->center} to conclude.
\end{proof}

\subsection{Tangent bundle and trigraded intersection numbers}
\label{PTD}

Mimicking~\cite{KhS} we will now consider the real projectivization~$P = P T(\D \bs \Delta)$ and a covering of it with deck transformation group~$\Z^3$. 
Consider an oriented embedding of~$\D$ as an open subset of $\R^2$ so thatits tangent bundle~$T \D$ has a canonical oriented trivialization. 
As a consequence the projectivization of~$T \D$ in restriction over $\D \bs \Delta$ identifies to
$$P T(\D \bs \Delta) = \R P^1 \times (\D \bs \Delta).$$
For any puncture $i$ in $\Delta$, we will denote by $\lambda_i : S^1 \rightarrow \D \bs \Delta$ the 
choice of a small loop winding positively around the puncture $i$. 
As the classes $[point \times \lambda_i]$ together with the class of a fibre $[\R P^1 \times point]$ form a 
basis of $H_1(P; \Z)$, we define a class $C \in H^1(P; \Z^3)$ by specifying its images on these elements, namely:
\begin{align*}
C([point \times \lambda_i]) & =  (-2, 1, 0)  \qquad \mbox{for} \ i=1,\dots, n \\
C([point \times \lambda_0]) & =  (-2, 0, 1) \\
C([\R P^1 \times point]) & =  (1, 0, 0).&
\end{align*}
We will denote by~$\check{P}$ the covering classified by~$C$ and by~$\chi$ the~$\Z^3$-action on it.

\begin{rem} In this section, we will work with the group~$\Diff(\D, \Delta, \{0\})$ of smooth orientation preserving diffeomorphisms of~$\D$ which fix the boundary of the disk pointwise, preserve the set~$\Delta$ and fix the point~$\{0\}$, instead of the corresponding group of homeomorphisms as in Section~\ref{BraidMCG}. 
This is possible as any orientation-preserving homeomorphism that fixes the boundary of $\D$ and the point $\{ 0 \}$ pointwise and $\Delta \bs \{0\}$ setwise is isotopic to a diffeomorphism in~$\Diff(\D, \Delta, \{0\})$. Moreover~$\MCG(\D, n+1, \{ 0 \})$ is also equal to the group~$\Diff(\D, \Delta, \{0\})$ up to smooth isotopy (see 
for example~\cite[Section 1.4]{FaMaMCG}). 
As we will prove results that will not depend of the diffeomorphisms inside their isotopy class, we will abuse notation and use the same letters for a diffeomorphism and its class in~$\MCG(\D, n+1, \{ 0 \})$. In particular, the twists~$t_{b_i}$ and~$t_{\partial}$ may denote in the following, depending on the context, the mapping class group represention of the generators of the braid group as in Section~\ref{BraidMCG} or smooth diffeomorphisms in~$\Diff(\D, \Delta, \{0\})$ representing them.
\end{rem}

Let~$f$ be an element of~$\Diff(\D, \Delta, \{0\})$, its differential~$Df$ is a diffeomorphism 
of the tangent bundle of $\D \bs \Delta$ which is linear in the fibres of $T(\D \bs \Delta)$ and thus induces a diffeomorphism~$PDf$ of~$P$. 
As such a map~$f$ preserves winding numbers and as $Df$ sends a fibre to another fibre, the map~$PDf$ preserves the class $C$ and can be lifted to an equivariant diffeomorphism of~$\check{P}$.
We will denote by~$\check{f}$ the unique lift of~$PDf$ which acts trivially on the fibre of~$\check{P}$ over the points 
of~$P_{|\partial \D}$.

Note that any curve~$c$ has a canonical section~$s_c : c\bs \Delta \rightarrow P$ by taking the class in each fiber of
its tangent line:~$s_c(z) = [T_z c]$.
One defines a trigrading of~$c$ to be a lift~$\check{c}$ of~$s_c$ to~$\check{P}$ and a trigraded curve to be a pair $(c,\check{c}$) 
of a curve and a trigrading of that curve.
The $\Z^3$-action on~$\check{P}$ induces a $\Z^3$-action on the set of trigraded curves and the lifts of diffeomorphisms induce a $\Diff(\D, \Delta, \{0\})$-action on this set that commutes to the $\Z^3$-action. One can also lift the isotopy relation so that these actions 
induce actions of these same group on the set of isotopy classes of trigraded curves.

The arguments of~\cite{KhS} adapt to our trigraded case so that we have the following properties:

\begin{lem}\label{lem3.12-3.14}
\hfill
\begin{itemize}
  \item[(i)]  A curve~$c$ admits a trigrading if and only if it is not a simple closed curve.
  \item[(ii)] The~$\Z^3$-action on the set of isotopy classes of trigraded curves is free : a trigraded curve~$\check{c}$ is 
never isotopic to~$\chi(r_1,r_2,r_3) \check{c}$ for any~$(r_1,r_2,r_3) \neq 0$.
  \item[(iii)] Let~$c$ be a curve which joins two points of~$\Delta$, none of them being the puncture~$0$, let 
$t_c \in \Diff(\D, \Delta, \{0\})$ be the half twist along it and~$\check{t_c}$ its preferred lift to~$\check{P}$.
Then~$\check{t_c}(\check{c}) = \chi(-1,1,0) \check{c}$ for any trigrading~$\check{c}$ of~$c$.
\end{itemize}
\end{lem}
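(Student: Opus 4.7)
For $(i)$, my approach is to analyze the obstruction to lifting the tangent section $s_c : c \setminus \Delta \to P$ to the $\Z^3$-cover $\check P$, namely the pullback class $s_c^* C \in H^1(c \setminus \Delta; \Z^3)$. If $c$ is not a simple closed curve, then $c \setminus \Delta$ is a disjoint union of contractible intervals, so the obstruction automatically vanishes and a lift exists. If instead $c$ is a simple closed curve (hence disjoint from $\Delta$), then $c \setminus \Delta \cong S^1$; by the Whitney rotation formula the tangent direction rotates by $\pm 2\pi$ along $c$, contributing $\pm 2$ generators to $\pi_1(\R P^1)$, so
\[
[s_c(c)] = \pm 2\,[\R P^1 \times \mathrm{pt}] + \sum_{i=1}^n k_i\,[\mathrm{pt} \times \lambda_i] + k_0\,[\mathrm{pt} \times \lambda_0]
\]
in $H_1(P;\Z)$ for integers $k_0, k_1, \ldots, k_n$. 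Examining the three coordinates of $C([s_c(c)])$: the third forces $k_0=0$, the second forces $\sum_{i \neq 0} k_i = 0$, and the first then reads $\pm 2 \neq 0$, so the obstruction is nonzero and no trigrading exists.

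For $(ii)$, the key fact is that the preferred lift $\check f$ of an $f \in \Diff(\D, \Delta, \{0\})$ depends only on the isotopy class of $f$: given an isotopy $\psi_t$ from $f$ to $g$ in $\Diff(\D, \Delta, \{0\})$, the unique lift $\check \psi_t$ starting at $\check f$ fixes the fibre over $\partial \D$ throughout (since $\psi_t$ fixes $\partial \D$ pointwise), so its endpoint must equal $\check g$. Consequently the induced action of $\Diff(\D, \Delta, \{0\})$ on isotopy classes of trigraded curves factors through the mapping class group. If $(c, \check c)$ is isotopic to $(c, \chi(r) \check c)$ via an ambient isotopy $\phi_t$ with $\phi_0 = \id$, $\phi_1(c)=c$, and $\check \phi_1(\check c) = \chi(r) \check c$, then $\phi_1$ represents the identity mapping class, so its action on isotopy classes of trigraded curves is trivial. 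Combined with the freeness of the $\Z^3$-action on trigradings (inherited from the free deck action on $\check P$), this forces $r = 0$.

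For $(iii)$, I would adapt the argument of \cite[Lemma 3.14]{KhS} to our trigraded setting. The support of the half-twist $t_c$ is a small disk $U$ containing the arc $c$ and no other marked point; in particular $U$ is disjoint from the puncture $0$ and from $\partial \D$, so $t_c$ is the identity in a neighbourhood of $0$. This immediately forces the third coordinate of the shift, which measures winding around $0$, to vanish. For the first two coordinates I follow the path-lifting computation of \cite{KhS}: the natural half-rotation isotopy from $\id$ to $t_c$ rotates the tangent direction along $c$ by $\pi$, giving one generator of $\pi_1(\R P^1)$ and, with orientation tracked carefully, a contribution of $-1$ in the first coordinate; the accompanying motion of the punctures $p, p'$ together with the return along $c$ produces a loop enclosing one puncture, contributing $+1$ in the second coordinate. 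Evaluating $C$ on this loop then yields exactly $\chi(-1, 1, 0)\check c$.

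The main obstacle is in $(iii)$: the natural half-rotation isotopy from $\id$ to $t_c$ moves the two punctures $p, p'$ during the interpolation and therefore does not live in $\Diff(\D, \Delta, \{0\})$. One must consequently justify carefully that the endpoint of the corresponding path-lift in $\check P$ coincides with the \emph{preferred} lift $\check t_c$, characterized by trivial action on the fibre over $\partial \D$. This delicate normalization is the heart of Khovanov-Seidel's argument; the only extra verification needed in our trigraded setting reduces to the observation that no winding around the additional puncture $0$ is picked up, which is immediate from the support condition on $U$.
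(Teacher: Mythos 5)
The paper itself does not write out a proof of this lemma; it only asserts that the arguments of Khovanov--Seidel adapt to the trigraded setting, so your proposal is really being measured against that intended adaptation. Your part (i) is correct and is exactly the standard obstruction computation: the lift exists automatically over the contractible components of $c\setminus\Delta$ for an arc, and for a simple closed curve the Whitney turning number forces the first coordinate of $C([s_c(c)])$ to be $\pm 2$ once the other two coordinates vanish. Your part (iii) is also a reasonable adaptation: the genuinely new content relative to \cite{KhS} is the vanishing of the third coordinate, and you correctly locate it in the fact that the support of $t_c$ misses the puncture $0$; the normalization issue you flag (the half-rotation isotopy leaves $\Diff(\D,\Delta,\{0\})$) is real and is resolved, as in \cite{KhS}, by a direct local computation of the deck transformation by which $\check{t}_c$ acts over points of the support, anchored to the trivial action over $\partial\D$.

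Part (ii), however, has a genuine gap: the argument is circular. Your chain of reasoning produces, via unique path lifting, the equality $\check{\phi}_1(\check{c})=\chi(r_1,r_2,r_3)\check{c}$, and then, from $\phi_1$ being isotopic to the identity, the conclusion that $\check{\phi}_1(\check{c})$ is \emph{isotopic} to $\check{c}$. Combining these gives $\chi(r_1,r_2,r_3)\check{c}\simeq\check{c}$ --- which is precisely the hypothesis you started from, not a contradiction. The final appeal to ``freeness of the $\Z^3$-action on trigradings, inherited from the free deck action'' cannot close this loop: the deck action being free gives $\chi(r_1,r_2,r_3)\check{c}\neq\check{c}$ as actual lifts, but says nothing about whether they are \emph{isotopic}, and freeness on isotopy classes is exactly the statement (ii) to be proved. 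To conclude from $\check{\phi}_1(\check{c})=\chi(r)\check{c}$ you would need the equality $\check{\phi}_1(\check{c})=\check{c}$, and $\phi_1$ being isotopic to the identity does not give that.

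What is actually needed is a computation of the monodromy of the covering (trigraded curves over curves) around a loop in the space of admissible curves. Concretely: an isotopy $c_t$ from $c$ back to $c$ must keep the two endpoints pinned at their marked points (they trace continuous paths in the discrete set $\Delta$), so the sweep $F(s,t)=c_t(s)$ is defined on a square whose left and right edges are constant; it follows that the track $t\mapsto c_t(s)$ of any interior point is null-homotopic in $\D\setminus\Delta$, and, letting $s$ tend to an endpoint, that the direction of approach to that puncture has winding number zero, hence the tangent line has zero net rotation in $\R P^1$. Evaluating $C$ on the resulting loop in $P$ then gives $(r_1,r_2,r_3)=0$. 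Some argument of this kind (or the boundary-rigidity argument for curves ending on $\partial\D$) is the actual content of the Khovanov--Seidel lemma you are adapting, and it is missing from your proposal.
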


We now define some local index of intersection of curves just as in~\cite{KhS} but in our now trigraded case.
Let~$(c_0, \check{c}_0)$ and~$(c_1, \check{c}_1)$ be two trigraded curves, and let~$z \in \D \bs \partial \D$ be a point where $c_0$ and $c_1$ intersect transversaly. To define a local intersection index at~$z$ for the two curves one considers a small circle~$\ell \subset \D \bs \Delta$ around~$z$, and take~$\alpha : [0,1] \rightarrow \ell$ to be an embedded arc which moves clockwise along~$\ell$ 
such that~$\alpha(0) \in c_0$ and~$\alpha(1) \in c_1$ and for~$t \in ]0,1[$, 
$\alpha(t) \not \subset c_0 \cup c_1$. If~$z$ is a puncture, then~$\alpha$ is unique up to a change of 
parametrization, otherwise, there are two possibilities which are distinguished by their endpoints. Then take a 
smooth path~$\pi : [0,1] \rightarrow P$ with~$\pi(t) \in P_{\alpha(t)}$ for all $t$, 
from~$\pi(0) = T_{\alpha(0)} c_0$ to~$\pi(1) = T_{\alpha(1)} c_1$, and such 
that~$\pi(t) \neq T_{\alpha(t)} \ell$ for all $t$ ($\pi$ is a family of tangent lines to~$\D$ along~$\alpha$ 
which are all transverse to~$\ell$). Take the lift~$\check{\pi}: [0,1] \rightarrow \check{P}$ of~$\pi$ 
with~$\check{\pi}(0) = \check{c_0}(\alpha(0))$. The end points~$\check{\pi}(1)$ and $\check{c_1}(\alpha(1))$ 
are lifts of the same point in~$P$ so that there exists~$(\mu_1,\mu_2,\mu_3) \in \Z^3$ such that
$$\check{c_1}(\alpha(1)) = \chi(r_1,r_2,r_3) \check{\pi}(1).$$
As this triple of integers is independant of all the choices made, one can then define the local index 
of~$\check{c}_0, \check{c}_1$ at~$z$ as
$$\mu^{\tri} (\check{c}_0, \check{c}_1 ; z) =  (\mu_1,\mu_2,\mu_3) \in \Z^3.$$

The local index has an analog symmetry property as the one in~\cite{KhS}, namely:

\begin{lem}\label{symmetrylocalindex}
If $(c_0,\check{c}_0)$ and $(c_1,\check{c}_1)$ are two trigraded curves such that $c_0$ and $c_1$ have 
minimal intersection, then
$$\mu^{\tri} (\check{c}_1, \check{c}_0 ; z) = 
\left \{ { 
\begin{array}{cl}
(1,0,0)  -  \mu^{\tri} (\check{c}_0, \check{c}_1 ; z) & \mbox{ if } z \not \in \Delta  \\
(0,1,0)  -  \mu^{\tri} (\check{c}_0, \check{c}_1 ; z) & \mbox{ if } z \in \Delta \bs \{0 \}  \\
(0,0,1)  -  \mu^{\tri} (\check{c}_0, \check{c}_1 ; z) & \mbox{ if } z \in \{0\}. 
\end{array}
}\right.$$
\end{lem}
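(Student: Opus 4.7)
The plan is to follow the strategy of the analogous bigraded statement in~\cite[Section~3]{KhS} and reduce the claim to a monodromy computation in $\check{P}$. Write $\mu := \mu^{\tri}(\check{c}_0, \check{c}_1; z)$ and $\nu := \mu^{\tri}(\check{c}_1, \check{c}_0; z)$. The three cases correspond exactly to the three possible sources of monodromy around $z$: the rotation of a tangent line in the fiber, or a winding around one of the two types of puncture. These are prescribed by the values $C([\R P^1 \times \mathrm{pt}]) = (1,0,0)$, $C([\mathrm{pt} \times \lambda_i]) = (-2, 1, 0)$ for $i \neq 0$, and $C([\mathrm{pt} \times \lambda_0]) = (-2, 0, 1)$.

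First, I select compatible arcs for the two local indices: take $\alpha'$ to be the arc of $\ell$ from $c_1 \cap \ell$ to $c_0 \cap \ell$ going clockwise, continuing right where $\alpha$ ends, and $\pi'$ a path of tangent lines over $\alpha'$ matching the endpoint conditions in the definition of $\nu$. Using the defining relations $\check{c}_1(\alpha(1)) = \chi(\mu) \check{\pi}(1)$ and $\check{c}_0(\alpha'(1)) = \chi(\nu) \check{\pi}'(1)$ together with the equivariance of lifts under $\chi$, the lift of the concatenation $\pi \cdot \pi'$ starting at $\check{c}_0(\alpha(0))$ ends at $\chi(-\mu - \nu) \check{c}_0(\alpha'(1))$.

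Next I close this path into a loop $L$ in $P$. If $z \notin \Delta$, the arcs $\alpha$ and $\alpha'$ together only cover a half-circle of $\ell$, so I append the reverse of the canonical section $s_{c_0}$ along the piece of $c_0$ going from $\alpha'(1)$ back to $\alpha(0)$ through $z$; this is well-defined since $c_0$ is smooth at $z$. If $z \in \Delta$, then $c_0$ is a radial arc terminating at $z$, so $c_0 \cap \ell$ is a single point and $\alpha(0) = \alpha'(1)$, so $\pi \cdot \pi'$ is already a loop. In both situations equivariance of the covering shows that the lift of $L$ starting at $\check{c}_0(\alpha(0))$ ends at $\chi(-\mu - \nu) \check{c}_0(\alpha(0))$; since $\chi$ acts freely on $\check{P}$, this yields
\[ \mu + \nu = -C([L]). \]

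Finally I compute $C([L])$ in each case, using radial families from $z$ for $\pi, \pi'$ (the tangent along a radial arc ending at the puncture agrees with the radial direction, and these are transverse to $\ell$). The projection of $L$ to $\D \setminus \Delta$ is null-homotopic when $z \notin \Delta$ (the half-disk it bounds avoids $\Delta$ for $\ell$ small enough) and is homotopic to one clockwise loop around $z$ when $z \in \Delta$. The radial family rotates the tangent line by $-\pi$ in the first case and by $-2\pi$ in the second and third. Together this gives $[L] = -[\R P^1 \times \mathrm{pt}]$, $[L] = -[\mathrm{pt} \times \lambda_z] - 2[\R P^1 \times \mathrm{pt}]$, and $[L] = -[\mathrm{pt} \times \lambda_0] - 2[\R P^1 \times \mathrm{pt}]$ respectively, so $-C([L])$ is $(1,0,0)$, $(0,1,0)$ and $(0,0,1)$ as required. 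The only delicate point is the bookkeeping of the rotation of the tangent line and the sign convention linking $\chi$ to $C$; once these are fixed, the case analysis is routine.
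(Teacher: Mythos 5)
Your argument is correct: the paper itself gives no proof of this lemma (it only asserts that the symmetry is ``analogous'' to the bigraded one in Khovanov--Seidel), and what you have written is exactly the intended adaptation of their monodromy computation --- concatenating the two defining arcs, closing up along $c_0$ (or not, at a puncture), and evaluating $\mu+\nu=-C([L])$, with the three values $(1,0,0)$, $(0,1,0)$, $(0,0,1)$ correctly recovered from the cancellation of the $-2$ tangent-line rotation against the $-2$ in $C([\mathrm{pt}\times\lambda_i])$. The one caveat you flag yourself, pinning down the sign convention relating $\chi$ to $C$, is genuine but harmless since it is fixed once and for all by the covering.
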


Let~$\check{c}_0$ and $\check{c}_1$ be two trigraded curves such that $c_0$ and $c_1$ have no intersection 
point in~$\partial \D$.
Take a curve~$c_1'$ isotopic to~$c_1$ with minimal intersection with~$c_0$. Then one can find a unique (by 
\ref{lem3.12-3.14}) trigrading~$\check{c}'_1$ of~$c'_1$ such that~$\check{c}'_1$ is isotopic to~$\check{c}_1$. 
Then the trigraded intersection number of~$\check{c}_0$ and $\check{c}_1$ is 
\begin{align*}
I^{\tri}(\check{c}_0,\check{c}_1) =& (1+q_1^{-1} q_2)
\left( {\sum_{z \in (c_0 \cap c_1')\bs \Delta} q_1^{\mu_1(z)+n \mu_3(z)}q_2^{\mu_2(z)-n \mu_3(z)}q_3^{-\mu_3(z)}}\right) \\
 &+ \sum_{z \in (c_0 \cap c_1')\cap \{1, \ldots, n\}} q_1^{\mu_1(z)+n \mu_3(z)}q_2^{\mu_2(z)-n \mu_3(z)}q_3^{-\mu_3(z)}\\
 &+ \frac{1}{2}(1+q_1^{-n} q_2^{n+1}q_3)
\sum_{z \in (c_0 \cap c_1')\cap \{0\}} q_1^{\mu_1(z)+n \mu_3(z)}q_2^{\mu_2(z)-n \mu_3(z)}q_3^{-\mu_3(z)}
\end{align*}
where $(\mu_1(z),\mu_2(z),\mu_3(z))=\mu^{\tri} (\check{c}_0, \check{c}_1 ; z).$
This trigraded intersection number is independent of the choice of~$c'_1$ and is an invariant of the 
isotopy classes of~$\check{c}_0$ and $\check{c}_1$. 
In the case $c_0$ and $c_1$ have intersection in~$\partial \D$, one uses a flow which moves~$\partial \D$ in the 
positive sense and define the trigraded intersection number as above, but this case will not occur in the sequel.

\begin{lem}\label{proprieteT}
The trigraded intersection number has the following properties:
\begin{itemize}
  \item[(T1)] $I(c_0,c_1) = \frac{1}{2} I^{\tri}(\check{c}_0,\check{c}_1)_{|q_1=q_2=q_3=1}$.
\item[(T2)] For any~$f \in \Diff(\D, \Delta, \{0\})$, 
$I^{\tri}(\check{f}(\check{c}_0),\check{f}(\check{c}_1)) = I^{\tri}(\check{c}_0,\check{c}_1) $.
\item[(T3)] For any~$(r_1,r_2,r_3) \in \Z^3$, 
\begin{align*}
 I^{\tri}(\check{c}_0,\chi(r_1,r_2,r_3)\check{c}_1) &=  I^{\tri}(\chi(-r_1,-r_2,-r_3)\check{c}_0,\check{c}_1) \\
&= q_1^{r_1+nr_3} q_2^{r_2-nr_3} q_3^{-r_3} I^{\tri}(\check{c}_0,\check{c}_1).
\end{align*}
\item[(T4)]  If $c_0 \cap c_1 \cap \partial \D = \emptyset$ and 
$I^{\tri}(\check{c}_0,\check{c}_1) = \sum_{r_1,r_2,r_3} a_{r_1,r_2,r_3} q_1^{r_1}q_2^{r_2} q_3^{r_3}$, then 
$I^{\tri}(\check{c}_1,\check{c}_0) = \sum_{r_1,r_2,r_3} a_{r_1,r_2,r_3} q_1^{-r_1}q_2^{1-r_2} q_3^{-r_3}$.
\end{itemize}
\end{lem}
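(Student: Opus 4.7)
The plan is to verify properties (T1)--(T4) one at a time, with (T1), (T2), (T3) being routine consequences of the definition of $I^{\tri}$ and of the local index, and (T4) requiring a termwise computation via Lemma~\ref{symmetrylocalindex}.

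For (T1), setting $q_1=q_2=q_3=1$ in the definition collapses the prefactors $(1+q_1^{-1}q_2)$ and $\tfrac{1}{2}(1+q_1^{-n}q_2^{n+1}q_3)$ to $2$ and $1$ respectively, and every monomial to $1$, yielding $2\#(c_0 \cap c_1' \setminus \Delta) + \#(c_0 \cap c_1' \cap \{1,\dots,n\}) + \#(c_0 \cap c_1' \cap \{0\})$, which is twice the expression~\eqref{geomintnumb} for $I(c_0,c_1)$. For (T2), the lift $\check{f}$ is $\Z^3$-equivariant by construction, so the local index is preserved under $f$: the small clockwise loop $\alpha$ around $z$ maps to such a loop around $f(z)$ (as $f$ is orientation-preserving) and $\check{f}\circ\check{\pi}$ is a valid lifted tangent-line path at $f(z)$. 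Since $f$ also permutes $\{1,\dots,n\}$ while fixing $\{0\}$, the stratification of intersection points is preserved and the bijection $z\mapsto f(z)$ matches the three sums in $I^{\tri}$ term by term. For (T3), from the definition of the local index one sees directly that
$$\mu^{\tri}(\check{c}_0,\chi(r_1,r_2,r_3)\check{c}_1;z) = \mu^{\tri}(\check{c}_0,\check{c}_1;z) + (r_1,r_2,r_3),$$
and shifting the first argument negates the shift. Substituting in the definition of $I^{\tri}$ multiplies every monomial uniformly by $q_1^{r_1+nr_3}q_2^{r_2-nr_3}q_3^{-r_3}$, which factors out.

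The main content is (T4), which I would prove termwise. For each $z \in c_0 \cap c_1'$ with local index $(\mu_1,\mu_2,\mu_3) = \mu^{\tri}(\check{c}_0,\check{c}_1;z)$, Lemma~\ref{symmetrylocalindex} identifies its matching contribution to $I^{\tri}(\check{c}_1,\check{c}_0)$ as having local index $e(z)-(\mu_1,\mu_2,\mu_3)$, with $e(z)$ equal to $(1,0,0)$, $(0,1,0)$ or $(0,0,1)$ according as $z$ is generic, a non-central puncture, or the central puncture. Note that the substitution $q_1^{r_1}q_2^{r_2}q_3^{r_3} \mapsto q_1^{-r_1}q_2^{1-r_2}q_3^{-r_3}$ can be rewritten as $P(q_1,q_2,q_3) \mapsto q_2 \cdot P(q_1^{-1}, q_2^{-1}, q_3^{-1})$. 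In each of the three cases, a direct algebraic check, relying on the identities
$$q_2 + q_1 = q_1(1+q_1^{-1}q_2), \quad q_2 + q_1^n q_2^{-n} q_3^{-1} = q_1^n q_2^{-n} q_3^{-1}(1 + q_1^{-n} q_2^{n+1} q_3),$$
shows that this substitution sends the $z$-contribution to $I^{\tri}(\check{c}_0,\check{c}_1)$ to the $z$-contribution to $I^{\tri}(\check{c}_1,\check{c}_0)$. Summing over all intersection points gives the identity. The only delicate point is the algebraic bookkeeping: the exponent shifts $+n\mu_3$, $-n\mu_3$, $-\mu_3$ in the definition of $I^{\tri}$ are calibrated precisely so that the monomial corrections produced by the two identities above cancel, which is exactly the statement that the two prefactors of $I^{\tri}$ behave self-dually under this substitution.
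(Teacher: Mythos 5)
Your proof is correct and follows exactly the route the paper intends: the paper states this lemma without proof (deferring to the bigraded arguments of Khovanov--Seidel), noting only that (T4) is a consequence of Lemma~\ref{symmetrylocalindex}, and your verification of (T1)--(T3) directly from the definitions together with the termwise check of (T4) via that symmetry lemma is precisely that argument written out, with the prefactor identities correctly accounting for the $\pm n\mu_3$ exponent shifts.
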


The property (T4) is as in the bigraded case a consequence of Lemma~\ref{symmetrylocalindex}.

\subsection{Normal form}

Fix again the basic set of curves $b_1, \ldots, b_n$ as in Figure~\ref{fig:arcsbi} and now consider 
curves $d_1, \ldots d_n$ as in Figure~\ref{fig:arcsdi} which divide the disc $\D$ into regions $D_1, \ldots, D_n$.
\begin{figure}[htbp]
  \begin{center}
  \psfrag{0}{$\scriptscriptstyle{0}$}
  \psfrag{1}{$\scriptscriptstyle{1}$}
  \psfrag{2}{$\scriptscriptstyle{2}$}
  \psfrag{n}{$\scriptscriptstyle{n}$}
  \psfrag{i}{$\scriptscriptstyle{i}$}
  \psfrag{i+1}{$\scriptscriptstyle{i+1}$}
  \psfrag{bi}{$\scriptscriptstyle{b_i}$}
  \psfrag{di}{$\scriptscriptstyle{d_i}$}
  \psfrag{d1}{$\scriptscriptstyle{d_1}$}
  \psfrag{dn}{$\scriptscriptstyle{d_n}$}
  \psfrag{dn-1}{$\scriptscriptstyle{d_{n-1}}$}
  \psfrag{D1}{$\scriptscriptstyle{D_1}$}
  \psfrag{Dn}{$\scriptscriptstyle{D_{n}}$}
   \includegraphics[height=4.5cm]{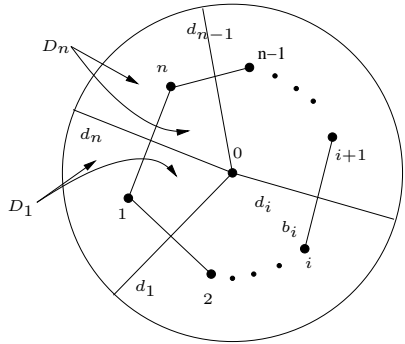} 
   \caption{The arcs $b_i$ and $d_i$ and the sectors $D_i$.}
   \label{fig:arcsdi}
  \end{center}
\end{figure}

It can be useful to depict precisely the effect on the sets of curves $b_k$ and $d_k$ of the chosen homeomorphism of the disk that goes from the affine configuration of Figure~\ref{fig:punctdisk2} to the aligned one of Figure~\ref{fig:punctdiskpi1}
if the reader wants to vizualize the lifts in $\check{P}$ using the description of the cover $\tilde{\D}_G$ of Section~\ref{DG}.
We give the images of these curves (still denoted $b_k$ and $d_k$) for a possible choice of homeomorphism in Figure~\ref{fig:affinealigne}.

\begin{figure}[htbp]
\begin{center}
\psfrag{D}{$\D$}
\psfrag{0}{$\scriptscriptstyle{0}$}
\psfrag{1}{$\scriptscriptstyle{1}$}
\psfrag{2}{$\scriptscriptstyle{2}$}
\psfrag{n-1}{$\scriptscriptstyle{n-1}$}
\psfrag{n}{$\scriptscriptstyle{n}$}
\psfrag{dots}{$\dots$}
\psfrag{b1}{$\scriptscriptstyle{b_1}$}
\psfrag{bn-1}{$\scriptscriptstyle{b_{n-1}}$}
\psfrag{bn}{$\scriptscriptstyle{b_n}$}
\psfrag{d1}{$\scriptscriptstyle{d_1}$}
\psfrag{dn-2}{$\scriptscriptstyle{d_{n-2}}$}
\psfrag{dn-1}{$\scriptscriptstyle{d_{n-1}}$}
\psfrag{dn}{$\scriptscriptstyle{d_n}$}
\includegraphics[height=4.5cm]{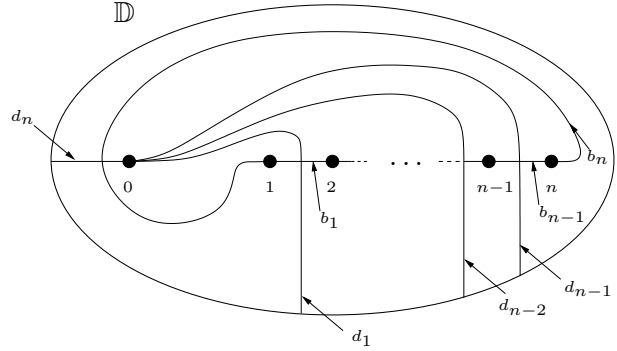}
\caption{The arcs $b_i$ and $d_i$ in the aligned configuration.}
\label{fig:affinealigne}
\end{center}
\end{figure}

We will say that an admissible curve~$c$ is in normal form if it has minimal intersection with all the~$d_i$. One 
can always achieve normal form by isotopy. The study of curves in this section makes sense because of the following uniqueness result:

\begin{lem} \label{lemma3.15}
Let $c_0$ and $c_1$ be two isotopic curves, both of which are in normal form. Then there is an isotopy 
relative to~$d_1 \cup d_2 \cup \ldots \cup d_n$ which carries~$c_0$ to~$c_1$. 
\end{lem}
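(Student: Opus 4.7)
The plan is to reduce the claim to a collection of local arc-isotopy problems, one per sector. The arcs $d_1,\dots,d_n$ are pairwise disjoint embedded arcs joining the puncture $0$ to $\partial\D$, so they split $\D$ into the sectors $D_1,\dots,D_n$, each a topological disk whose interior contains a single puncture (namely $k$ for the sector $D_k$). Because $c_0$ and $c_1$ are isotopic curves both in normal form, the bigon criterion for arcs in a punctured disk (cf.\ Farb--Margalit, \emph{Primer on Mapping Class Groups}, Chapter~1) yields that $|c_j\cap d_i| = I(c_j,d_i)$ is independent of $j$, and moreover that the cyclic sequence of sectors traversed by $c_0$ matches that of $c_1$, together with the ordering of the intersection points along each $d_i$.

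Given this combinatorial agreement, an ambient isotopy of $\D$ supported in an arbitrarily small regular neighborhood of $\bigcup_i d_i$, and preserving each $d_i$ setwise, can be used to move $c_0$ so that $c_0\cap d_i = c_1\cap d_i$ for every $i$, with matched intersection points appearing in the correct order along each $d_i$. After this preliminary adjustment the two curves agree on $\bigcup_i d_i$, and the restriction $c_j\cap D_k$ is a disjoint union of simple sub-arcs, paired between $c_0$ and $c_1$ by matching endpoints on $\partial D_k\cup\{k\}$.

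Arc by arc, one then shows that each matched pair of sub-arcs is isotopic inside $D_k$ relative to $\partial D_k$ (and fixing the puncture $k$ if it is an endpoint). Since $D_k$ is a disk with a single interior puncture, two simple arcs with shared endpoints are isotopic rel $\partial D_k$ if and only if they bound no essential bigon around $k$; a nontrivial such bigon would reassemble, after undoing the Step~2 adjustment, into a global bigon in $\D\setminus\Delta$ between $c_0$ and $c_1$ enclosing $k$, contradicting $c_0\simeq c_1$ (which in particular forces identical winding numbers around every puncture). Piecing together these sectorial isotopies, each supported inside its sector and fixing its boundary, produces the desired isotopy of $\D$ relative to $\bigcup_i d_i$ carrying $c_0$ to $c_1$.

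The principal obstacle is the last step: one must rule out arc-wise winding discrepancies inside a single sector that could \emph{a priori} be cancelled by opposite windings in another sector, so that the global winding-number test alone does not suffice. I expect the cleanest route is to lift to the universal cover of $\D\setminus\Delta$ along a chosen lift of a common endpoint of $c_0$ and $c_1$. The lifted sectors tile the universal cover; the lift of $c_0$ and the corresponding lift of $c_1$ end at the same point by the isotopy hypothesis, and each lifted sub-arc in a lifted sector then has the same endpoints as its matched partner, forcing arc-wise isotopy sector by sector downstairs.
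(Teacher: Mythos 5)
The paper does not actually prove this lemma: it is imported verbatim from Khovanov--Seidel (their Lemma~3.2), whose proof is the standard two-dimensional-topology argument via geodesic representatives / universal covers. Your outline --- cut along $\bigcup_i d_i$, match the combinatorics of the intersection pattern, then build the isotopy sector by sector --- is exactly that argument, and your proposed completion by passing to the universal cover is the right idea. So the approach is sound; the issues are with the order and completeness of the steps rather than with the strategy.

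Two concrete gaps. First, the opening claim that isotopic normal-form curves traverse the same cyclic sequence of sectors with the same ordering of intersection points along each $d_i$ does \emph{not} follow from the bigon criterion alone: the bigon criterion gives you only the equality of the counts $|c_j\cap d_i|=I(c_j,d_i)$, not the matching of the orderings. That matching is precisely the content of the lemma, and the tool that proves it is the universal-cover (or hyperbolic-geodesic) argument you only invoke in your final paragraph. Logically the argument should be run in the opposite order: fix a hyperbolic metric (or lift to the universal cover, whose preimage of $\bigcup_i d_i$ gives the tiling you describe), establish that corresponding lifts of $c_0$ and $c_1$ have the same endpoints and cross the same sequence of lifted walls, and then read off both the combinatorial matching and the sectorwise isotopies downstairs. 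Second, your intermediate argument that a punctured bigon inside a sector ``would reassemble into a global bigon enclosing $k$, contradicting $c_0\simeq c_1$'' is not valid as stated --- individual bigons between sub-arcs of isotopic curves can contain punctures as long as the windings cancel globally, which is exactly the obstruction you then name yourself; this paragraph should simply be deleted in favour of the covering-space argument. A further technical point to address: the admissible curves end at punctures, which do not lie in $\D\setminus\Delta$, so ``lifting $c_0$ and $c_1$ to the universal cover along a common endpoint'' requires either truncating the curves near the punctures (working in the compact surface with boundary obtained by removing small disks around $\Delta$) or using the cusped hyperbolic completion; and the case where $c_0, c_1$ are simple closed curves, which the paper's definition of ``curve'' allows, has no endpoints at all and must be handled by the annular cover associated to the free homotopy class.
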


Let~$c$ be a curve in normal form. Then each connected component of~$c \cap D_k$ belongs to one 
of the six following type depicted in Figure~\ref{fig:6types}.
\begin{figure}[htbp]
  \begin{center}
  \psfrag{0}{$\scriptscriptstyle{0}$}
  \psfrag{k}{$\scriptscriptstyle{k}$}
  \psfrag{c}{$\scriptscriptstyle{c}$}
  \psfrag{dk}{$\scriptscriptstyle{d_k}$}
  \psfrag{dk-1}{$\scriptscriptstyle{d_{k-1}}$}
  \psfrag{Dk}{$\scriptscriptstyle{D_{k}}$}
  \psfrag{Type 1}{\scriptsize{Type 1}}
  \psfrag{Type 1'}{\scriptsize{Type 1'}}
  \psfrag{Type 2}{\scriptsize{Type 2}}
  \psfrag{Type 2'}{\scriptsize{Type 2'}}
  \psfrag{Type 3}{\scriptsize{Type 3}}
  \psfrag{Type 3'}{\scriptsize{Type 3'}}
   \includegraphics[height=10cm]{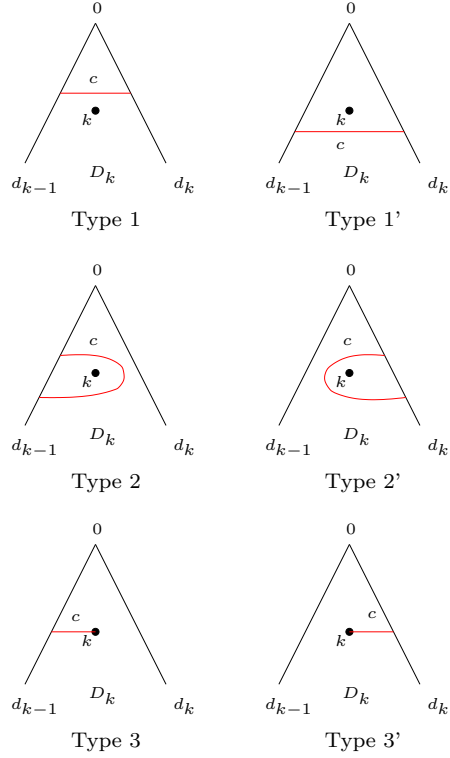} 
   \caption{The six possible types.}
   \label{fig:6types}
  \end{center}
\end{figure}

Conversely, an admissible curve~$c$ which intersects all the~$d_k$ transversally and such that each 
connected component of~$c \cap D_k$ belongs to one of the types listed in the Figure~\ref{fig:6types}  
is already in normal form.

To be more precise, one should actually work with the group of diffeomorphisms of~$D_k$ which 
fix~$\partial \D \cap D_k \cup \{0\}$ and preserve~$d_{k-1}$, $d_{k}$ and $k$. 
The list of types 
then classifies each connected component of~$c\cap D_k$ up to an isotopy in this group. 
Lemma~\ref{lemma3.15} then shows that the connected components of each type, their relative position, 
and the way they join each other, is an invariant of the isotopy class of $c$.

For the rest of this section, $c$ is an admissible curve in normal form.
We will call crossing and denote~$cr(c) = c \cap (d_1 \cup d_2 \cup \ldots \cup d_n)$ the intersections of 
this curve with the barriers of the sectors and the intersections with~$d_k$ will be called $k$-crossings of~$c$.
The connected components of~$c \cap D_k, 1 \leq k \leq n$ are called segments of~$c$, and a segment is 
said essential if its endpoints are both crossings (and not punctures). 
So, the essential segments are the ones of type 1, 1', 2, 2', and the basic curves have no essential segments.

The curve~$c$ can be reconstructed up to isotopy by listing its crossings and the types of essential segments bounded by consecutive crossings as one travels along~$c$ from one end to another, and 
Lemma~\ref{lemma3.15} shows that conversely this combinatorial data is an invariant of the isotopy class 
of~$c$.\\

Let us now study the action of half-twists on normal forms.
Remember that we denoted~$t_{b_k}$ the half-twist along~$b_k$.
Even when the curve~$c$ is in normal form, the curve~$t_{b_k}(c)$ is not necessarly in normal form too. 
This image~$t_{b_k}(c)$ has minimal intersection with the~$d_i$ for~$i \neq k$ but one might need to simplify its intersections 
with~$d_k$ to get~$t_{b_k}(c)$ into normal form. The same argument as in~\cite{KhS} leads to the analogous 
statement:

\begin{prop} \label{Proposition3.17} 
\hfill
\begin{itemize}
\item[(i)] The normal form of~$t_{b_k}(c)$ coincides with~$c$ outside of~$D_k \cup D_{k+1}$. 
The curve~$t_{b_k}(c)$ can be brought into normal form by an isotopy inside~$D_k \cup D_{k+1}$. 
\item[(ii)] Assume~$t_{b_k}(c)$ is in normal form. There is a natural bijection between the $i$-crossings of~$c$ 
and the $i$-crossings of~$t_{b_k}(c)$ for $i \neq k$. There is a natural bijection between connected components 
of intersections of~$c$ and~$t_{b_k}(c)$ inside~$D_k \cup D_{k+1}$. 
\end{itemize}
\end{prop}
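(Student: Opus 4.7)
The plan is to exploit the small support of the half-twist and reduce everything to a local picture inside $D_k \cup D_{k+1}$, following the same scheme as in \cite{KhS} but with one extra sector and the distinguished puncture $\{0\}$ to keep in mind.

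First I would fix a closed disk neighbourhood $U_k$ of $b_k$, small enough that $U_k \subset \mathrm{int}(D_k \cup D_{k+1})$, so $U_k$ is disjoint from all the $d_j$ with $j \neq k$ and from the puncture $0$. The half-twist $t_{b_k}$ can be represented by a diffeomorphism supported in $U_k$, hence $t_{b_k}(c)$ coincides with $c$ on $\D \setminus U_k$, and in particular on $\D \setminus (D_k \cup D_{k+1})$. This gives the first sentence of (i) immediately.

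For the second sentence of (i), I would use the standard bigon criterion. For $j \neq k$ the curves $t_{b_k}(c)$ and $d_j$ agree with $c$ and $d_j$ in a neighbourhood of $d_j$, so any empty bigon between $t_{b_k}(c)$ and $d_j$ would give an empty bigon between $c$ and $d_j$, contradicting minimality of $c$ with respect to $d_j$. So only intersections with $d_k$ may fail to be minimal. Any empty bigon $B$ between $t_{b_k}(c)$ and $d_k$ is contained in $D_k \cup D_{k+1}$, since the boundary of $D_k \cup D_{k+1}$ consists of arcs of $\partial \D$, of $d_{k-1}$ and of $d_{k+1}$, none of which contain pieces of $d_k$ and none of which can be crossed by an innermost bigon. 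Choosing innermost such bigons and pushing them across $d_k$ in the usual way produces an isotopy of $t_{b_k}(c)$ supported in a small collar of $d_k$ inside $D_k \cup D_{k+1}$; this isotopy does not meet $d_{k-1}$ or $d_{k+1}$, so the minimality with respect to the other $d_j$ obtained above is preserved.

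For (ii), the bijection of $i$-crossings for $i \neq k$ is then tautological: $t_{b_k}(c)$ and $c$ literally agree as subsets of a neighbourhood of each $d_i$, $i \neq k$, and the normalisation isotopy above is supported away from $d_{k-1} \cup d_{k+1}$ and from all $d_i$ with $i \neq k-1,k,k+1$. The bijection between connected components of $c \cap (D_k \cup D_{k+1})$ and of $t_{b_k}(c) \cap (D_k \cup D_{k+1})$ after normalisation is obtained by matching their endpoints on $d_{k-1}$ and $d_{k+1}$, which are the same points, and then using that the result of applying $t_{b_k}$ to a segment of one of the six types listed in Figure \ref{fig:6types} is again (after bigon removal) a segment whose type is determined by a finite local model on the twice-punctured sector $D_k \cup D_{k+1}$, independent of the rest of $c$.

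The main obstacle I expect is the locality assertion used in the middle paragraph: one has to be sure that the simplifying isotopy that removes bigons with $d_k$ can be chosen inside $D_k \cup D_{k+1}$. This rests on the innermost-bigon argument together with the observation that bigons between $t_{b_k}(c)$ and $d_k$ cannot escape the sector $D_k \cup D_{k+1}$. In the affine setting one must additionally check that the puncture $0$ does not interfere, but since $U_k$ and the bigons all lie inside $D_k \cup D_{k+1}$, and $0$ lies on the common boundary of these sectors only through the arcs $d_{k-1}, d_{k+1}$ meeting at $0$, the innermost bigons are automatically away from $0$ and the Khovanov--Seidel argument goes through verbatim.
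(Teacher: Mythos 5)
Your argument is essentially the one the paper itself relies on (the paper gives no proof beyond deferring to the Khovanov--Seidel locality argument): represent $t_{b_k}$ by a diffeomorphism supported in a neighbourhood of $b_k$ contained in $D_k\cup D_{k+1}$ and away from $0$ and the $d_j$ with $j\neq k$, check that minimality with those $d_j$ persists, and remove the possible empty bigons along $d_k$ by an innermost-bigon isotopy that stays in the sector. The only step you state too quickly is that no new empty bigon with $d_j$, $j\neq k$, can appear --- the arc of $t_{b_k}(c)$ bounding such a bigon could a priori run through the support of the twist, so ``the curves agree near $d_j$'' is not by itself conclusive --- but this is repaired immediately by the invariance of geometric intersection numbers (Remark~\ref{syminvgeomintnb}) applied to $t_{b_k}(d_j)=d_j$, which shows the unchanged number of intersection points is still the minimal one.
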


We need thus to study~$c$ and~$t_{b_k}(c)$ in~$D_k \cup D_{k+1}$. Just as before we will define objects for this 
situation. 
We call $k$-string of~$c$ a connected component of $c \cap (D_k \cup D_{k+1})$ and denote by $st(c,k)$ 
the set of $k$-strings of~$c$.
We call $k$-string a curve in~$D_k \cup D_{k+1}$ which is a $k$-string of~$c$ for some admissible 
curve~$c$ in normal form.
Two $k$-strings are isotopic if there is a deformation of one into the other via diffeomorphisms 
of~$D' = D_k \cup D_{k+1}$ which fix $d_{k-1}$ and $d_{k+1}$ and preserve the punctures 
lying inside of~$D'$ setwise. The isotopy classes under this equivalence relation can be divided into five families of 
types denoted $I_u, II_u, II'_u, III_u, III'_u$ ($u \in \Z$), which are depicted on Figure~\ref{fig:kstringk=2n-1}, in analogy to~\cite[Figure 15]{KhS}, for the $u=0$ case and such that the type~$u+1$ is obtained from the $u$-type by applying~$t_{b_k}$.
The $k$-strings are by definition in normal form and, as before, we can define for them crossings and essential segments and denote the crossings of a $k$-string $g$ similarly by~$cr(g)$.

For the geometric intersection numbers, the same argument as in~\cite{KhS} leads to the analogous result: 

\begin{prop} \label{Lemma 3.18}
If $k \in \{1, \dots, n\}$ the geometric intersection number $I(b_k,c)$ can be computed as follows: 
every $k$-string of~$c$ which is of type $I_u, II_u, II'_u$ contributes~$1$, 
those of type $III_u, III'_u$ contribute~$\frac{1}{2}$, and the other types contribute to~$0$.
\end{prop}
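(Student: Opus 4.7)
The plan is to adapt the argument of \cite[Lemma 3.18]{KhS} to the affine setting, the only new feature being the presence of the fixed puncture $0$, which plays no role here since $b_k$ is disjoint from it for $k \in \{1, \dots, n\}$. First I would observe that, up to isotopy, the arc $b_k$ is contained in $D_k \cup D_{k+1}$: it joins the punctures $k$ and $k+1$, crosses $d_k$ exactly once, and is disjoint from every other $d_j$. Hence any intersection of $b_k$ with the curve $c$ must occur inside $D_k \cup D_{k+1}$, and decomposes as a disjoint union of intersections between $b_k$ and the $k$-strings of $c$:
$$I(b_k, c) \;=\; \sum_{g \in st(c,k)} I(b_k, g).$$
It therefore suffices to evaluate $I(b_k, g)$ for each of the five families of $k$-strings listed in Figure~\ref{fig:kstringk=2n-1}.

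The second step is a type-by-type case check. For each of the types $I_u, II_u, II'_u, III_u, III'_u$ I would draw the $k$-string together with a representative of $b_k$ inside $D_k \cup D_{k+1}$, perturbed using the flow $Z$ from~(\ref{geomintnumb}) whenever an endpoint at a puncture is involved. The anticipated outcome, directly visible on the pictures, is that types $I_u, II_u, II'_u$ produce exactly one transverse interior intersection with $b_k$, contributing $1$ according to the convention~(\ref{geomintnumb}); types $III_u, III'_u$ end at one of the punctures $k$ or $k+1$ lying on $b_k$ and have no further intersection with $b_k$, contributing $\tfrac{1}{2}$; and the remaining (inessential) pieces of $c$ can be isotoped off $b_k$, contributing $0$.

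The main obstacle I expect is verifying that the chosen intersections are minimal, i.e.\ that no empty bigon between $b_k$ and $c$ spoils the count. Within a single $k$-string this is immediate from the explicit picture, as in the finite case. The potentially delicate point is the global absence of bigons spanning several $k$-strings: any such bigon would be entirely contained in $D_k \cup D_{k+1}$, since $b_k$ is disjoint from $d_j$ for $j \neq k$, and because $c$ is in normal form with respect to the $d_j$'s (together with Proposition~\ref{Proposition3.17}), it would already be visible inside a single $k$-string, contradicting the local check. This mirrors the argument of Khovanov and Seidel and requires no genuinely new input beyond the normal form machinery already set up in the preceding subsection.
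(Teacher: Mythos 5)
Your proposal is correct and follows essentially the same route as the paper, which itself simply invokes the argument of Khovanov--Seidel: decompose $I(b_k,c)$ over the $k$-strings using that $b_k$ lies in $D_k\cup D_{k+1}$ and meets only $d_k$, check each type against the convention of~(\ref{geomintnumb}), and use the normal form to rule out bigons. The affine features (the central puncture $0$ and the wrap-around cases $k=1,n$) play no role in the ungraded count, exactly as you note.
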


For trigraded curves, one can choose trigradings~$\check{b}_k, \check{d}_k$ of~$b_k, d_k$ such that
\begin{align*}
I^{\tri}(\check{d}_k,\check{b}_k) & = 1 + q_1^{-1} q_2\\ 
I^{\tri}(\check{b}_{k+1},\check{b}_k) & = 1 \qquad \text{for } i = 1, \dots n-1, \\
I^{\tri}(\check{b}_{1},\check{b}_n) & = q_3^{-1}
\end{align*}
\noindent
that is such that the local intersection indices at the intersection points are:
\begin{align*}
\mu^{\tri}(\check{d}_k,\check{b}_k;z) & = (0,0,0)\\ 
\mu^{\tri}(\check{b}_{k+1},\check{b}_k;z) & = (0,0,0) \qquad \text{for } i = 1, \dots n-1, \\
\mu^{\tri}(\check{b}_{1},\check{b}_n;z) & = (-n,n,1).
\end{align*}

These conditions determine the trigradings uniquely up to an overall shift~$\chi(r_1,r_2,r_3)$.

Now, if~$\check{c}$ is a trigrading of an admissible curve~$c$ in normal form and if~$a$ is a connected 
component of~$c \cap D_k$ for some $k$ and $\check{a}$ is the preimage in $\check{c}$ of~$a$ under 
the covering projection, then~$\check{a}$ is entirely determined by~$a$ and the local 
index~$\mu^{\tri}(\check{d}_{k-1},\check{a};z)$ or~$\mu^{\tri}(\check{d}_k,\check{a};z)$ at any 
point~$z \in (d_{k-1} \cup d_{k})\cap a$ (if there is more than one such point, the local indices determine each other).

In Figures~\ref{fig:6typesk=2n} and~\ref{fig:6typesk=1} we give the classification of the pairs~$(a,\check{a})$ for the various possible types with 
the local indices. Note that Figure~\ref{fig:6typesk=1} shows the only two types $1$ and $1'$ for which the values of the local indices differ in the case when $k=1$ from the generic case $k \neq1$. The indices mentioned have to be read the following way: if the connected 
component~$(a,\check{a})$ is of type, for example $1(r_1,r_2,r_3)$ for $k=2, \ldots,n$, with 
crossings~$z_0 \in d_{k} \cap a$ 
and~$z_1 \in d_{k-1} \cap a$ and the local index at~$z_0$ 
is~$\mu^{\tri}(\check{d}_{k},\check{a};z_0) = (r_1,r_2,r_3)$, 
then~$\mu^{\tri}(\check{d}_{k-1},\check{a};z_1) = (r_1-1,r_2+1,r_3)$.

\begin{figure}[htbp]
  \begin{center}
  \psfrag{0}{$\scriptscriptstyle{0}$}
  \psfrag{k}{$\scriptscriptstyle{k}$}
  \psfrag{c}{$\scriptscriptstyle{a}$}
  \psfrag{dk}{$\scriptscriptstyle{d_k}$}
  \psfrag{dk-1}{$\scriptscriptstyle{d_{k-1}}$}
  \psfrag{Dk}{$\scriptscriptstyle{D_{k}}$}
  \psfrag{Type 1}{\scriptsize{Type $1(r_1,r_2,r_3)$}}
  \psfrag{Type 1'}{\scriptsize{Type $1'(r_1,r_2,r_3)$}}
  \psfrag{Type 2}{\scriptsize{Type $2(r_1,r_2,r_3)$}}
  \psfrag{Type 2'}{\scriptsize{Type $2'(r_1,r_2,r_3)$}}
  \psfrag{Type 3}{\scriptsize{Type $3(r_1,r_2,r_3)$}}
  \psfrag{Type 3'}{\scriptsize{Type $3'(r_1,r_2,r_3)$}}
  \psfrag{r1r2r3}{$\scriptscriptstyle{(r_1,r_2,r_3)}$}
  \psfrag{r1-1r2+1r3}{$\scriptscriptstyle{(r_1-1,r_2+1,r_3)}$}
  \psfrag{r1-1r2r3}{$\scriptscriptstyle{(r_1-1,r_2,r_3)}$}
  \psfrag{r1+1r2-1r3}{$\scriptscriptstyle{(r_1+1,r_2-1,r_3)}$}
   \includegraphics[height=10cm]{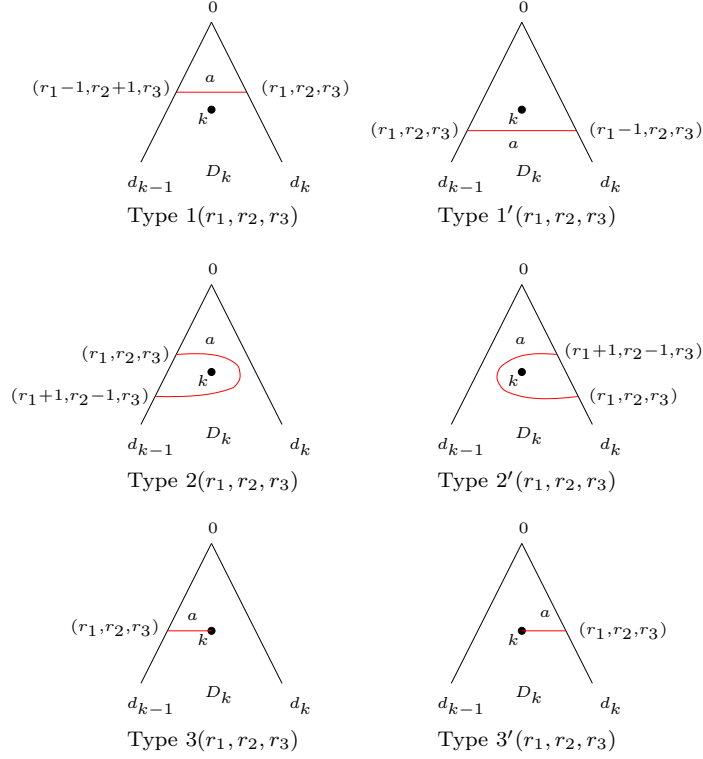} 
   \caption{The local indices for $k=2, \ldots, n$.}
   \label{fig:6typesk=2n}
  \end{center}
\end{figure}

\begin{figure}[htbp]
  \begin{center}
  \psfrag{0}{$\scriptscriptstyle{0}$}
  \psfrag{k}{$\scriptscriptstyle{1}$}
  \psfrag{c}{$\scriptscriptstyle{a}$}
  \psfrag{dk}{$\scriptscriptstyle{d_1}$}
  \psfrag{dk-1}{$\scriptscriptstyle{d_{n}}$}
  \psfrag{Dk}{$\scriptscriptstyle{D_{1}}$}
  \psfrag{Type 1}{\scriptsize{Type $1(r_1,r_2,r_3)$}}
  \psfrag{Type 1'}{\scriptsize{Type $1'(r_1,r_2,r_3)$}}
  \psfrag{Type 2}{\scriptsize{Type $2(r_1,r_2,r_3)$}}
  \psfrag{Type 2'}{\scriptsize{Type $2'(r_1,r_2,r_3)$}}
  \psfrag{Type 3}{\scriptsize{Type $3(r_1,r_2,r_3)$}}
  \psfrag{Type 3'}{\scriptsize{Type $3'(r_1,r_2,r_3)$}}
  \psfrag{r1r2r3}{$\scriptscriptstyle{(r_1,r_2,r_3)}$}
  \psfrag{r1-1r2+1r3}{$\scriptscriptstyle{(r_1+n-1,r_2-n+1,r_3-1)}$}
  \psfrag{r1-1r2r3}{$\scriptscriptstyle{(r_1-n-1,r_2+n,r_3+1)}$}
  \psfrag{r1+1r2-1r3}{$\scriptscriptstyle{(r_1+1,r_2-1,r_3)}$}
   \includegraphics[height=3cm]{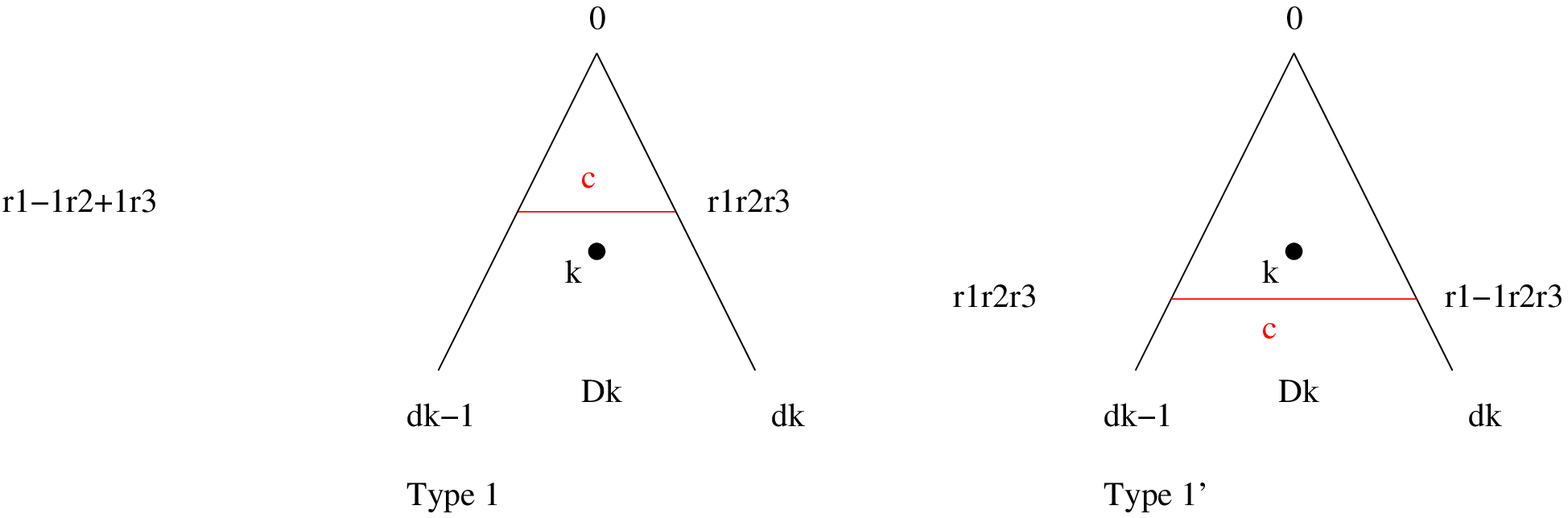} 
   \caption{The change of local indices for $k=1$.}
   \label{fig:6typesk=1}
  \end{center}
\end{figure}

Remember that we defined, at the beginning of Section~\ref{PTD}, for any 
diffeomorphism~$f \in \Diff(\D, \Delta, \{0\})$ its prefered lift~$\check{f}$ as the only lift that acts 
trivially on the preimage of any point of the boundary of the disk. Remember also that we considered in 
Section~\ref{BraidMCG} the half-twist $t_{b_i}$ along the curve~$b_i$ and the twist $t_{\partial}$ corresponding 
to the generator~$\rho$ of the braid group, so that in the 
following~$\check{t}_{b_i}$ and $\check{t}_{\partial}$ will be the prefered lift of these diffeomorphisms.

\begin{prop} \label{Prop3.19}
The diffeomorphisms~$\check{t}_{b_i}$, $i = 1 \ldots n$ and $\check{t}_{\partial}$ induce an extended 
affine type A braid group action on the isotopy classes of admissible trigraded curves: if~$\check{c}$ is 
an admissible trigraded curve, we have the following isotopy relations of curves:
\begin{align*}
\check{t}_{b_i} \check{t}_{b_j} (\check{c}) & = \check{t}_{b_j} \check{t}_{b_i} (\check{c}) 
& \mbox{for distant} \ i,j=1,\dots, n, \\
\check{t}_{b_i} \check{t}_{b_{i+1}} \check{t}_{b_i}  (\check{c}) & =  \check{t}_{b_{i+1}} \check{t}_{b_i}   \check{t}_{b_{i+1}}(\check{c}) 
& \mbox{for} \ i=1,\dots, n, \\ 
\check{t}_{\partial} \check{t}_{b_i} \check{t}_{\partial}^{-1} & = \check{t}_{b_{i+1}} 
& \mbox{for} \ i=1,\dots, n.
\end{align*}
\end{prop}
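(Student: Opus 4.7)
The strategy is to reduce each of the three relations to the corresponding relation already known to hold in $\MCG(\D, n+1, \{0\}) \cong \hat{\B}_{\hat{A}_{n-1}}$ (see Section~\ref{BraidMCG}), then promote it to an identity between preferred lifts acting on isotopy classes of trigraded curves. Two general principles will drive the argument, and once they are in place the proposition follows essentially for free.

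The first principle is that the preferred-lift construction $f \mapsto \check{f}$ is a group homomorphism on $\Diff(\D, \Delta, \{0\})$. Given $f, g$ in this group, set $h = fg$. The composition $\check{f} \circ \check{g}$ is a $\chi$-equivariant lift of $PDf \circ PDg = PD(h)$ to $\check{P}$, and it acts trivially on the preimage of $P_{|\partial \D}$ because both $\check{f}$ and $\check{g}$ do (and $h$ still fixes $\partial \D$ pointwise). By the uniqueness characterization of the preferred lift, one has $\check{f} \circ \check{g} = \check{h}$. Applying this to $g = f^{-1}$ identifies $(\check{f})^{-1}$ with the preferred lift of $f^{-1}$, so inverses are handled identically.

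The second principle is that preferred lifts of isotopic diffeomorphisms act identically on isotopy classes of trigraded curves. Given an isotopy $\{f_t\}_{t \in [0,1]}$ in $\Diff(\D, \Delta, \{0\})$, the family $\{PDf_t\}$ is a smooth path of diffeomorphisms of $P$, all preserving the class $C$. Lifting this path to $\check{P}$ starting from $\check{f}_0$ produces a smooth family of equivariant diffeomorphisms; continuity together with the condition that each $f_t$ fixes $\partial \D$ pointwise forces each lift to act trivially on $\check{P}_{|\partial \D}$, hence to coincide with the preferred lift of $f_t$. Therefore $\check{f}_0$ and $\check{f}_1$ are isotopic as diffeomorphisms of $\check{P}$ and act identically on isotopy classes of trigraded curves.

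Combining these two principles, it suffices to verify the three identities at the level of $\MCG(\D, n+1, \{0\})$: the commutation $t_{b_i} t_{b_j} \simeq t_{b_j} t_{b_i}$ for distant $i,j$, the braid relation $t_{b_i} t_{b_{i+1}} t_{b_i} \simeq t_{b_{i+1}} t_{b_i} t_{b_{i+1}}$, and the conjugation $t_\partial t_{b_i} t_\partial^{-1} \simeq t_{b_{i+1}}$. These are precisely the defining relations of $\hat{\B}_{\hat{A}_{n-1}}$ recalled in Section~\ref{BraidMCG}, hence each holds up to isotopy in $\Diff(\D, \Delta, \{0\})$. The main subtlety is the conjugation relation, since $t_\partial$ is the new generator absent from the Khovanov--Seidel setting; one must check that $t_\partial$ sends the support of $t_{b_i}$ to that of $t_{b_{i+1}}$ preserving orientations, so the conjugate really is the half-twist $t_{b_{i+1}}$ on the nose up to isotopy. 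This is transparent from the pictorial description in Figure~\ref{fig:punctdisk2}, where $t_\partial$ is the $1/n$-rotation cyclically permuting the arcs $b_i$.
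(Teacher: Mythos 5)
Your proof is correct and matches the argument the paper leaves implicit: Proposition~\ref{Prop3.19} is stated without proof (the paper defers to the adaptation of the Khovanov--Seidel arguments), and your two principles --- that $f\mapsto\check{f}$ is a homomorphism by uniqueness of the preferred lift, and that preferred lifts of isotopic diffeomorphisms in $\Diff(\D,\Delta,\{0\})$ act identically on isotopy classes of trigraded curves --- combined with the known relations in $\MCG(\D,n+1,\{0\})\cong\hat{\B}_{\hat{A}_{n-1}}$ are precisely that adaptation. In particular you correctly rule out the only possible failure mode, namely that the two sides of a relation could differ by a nontrivial shift $\chi(r_1,r_2,r_3)$, since both sides are preferred lifts of isotopic diffeomorphisms and hence isotopic as equivariant diffeomorphisms of $\check{P}$.
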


A crossing of~$c$ is also called a crossing of~$\check{c}$ and we denote also~$cr(\check{c})$ the set of 
crossing of~$\check{c}$,  $cr(\check{c}) = cr(c)$.
But to crossings of~$\check{c}$, we can also associate its local index in~$\Z^3$. 
As~\cite{KhS}, we add an extra index~$k$ to a crossing $z$ which is equal to the index~$k$ such that~$z \in d_k \cap c$ 
and to 
emphazise that it is a function of the intersection point we will denote it $k(z)$ such that  we get a map 
which associate to each crossing~$z$ the four intergers~$(k(z), \mu_1(z), \mu_2(z), \mu_3(z))$.

We call essential segments of~$\check{c}$ the essential segments of~$c$ together with the trigrading, 
the trigradings are given by assigning local indices to the ends of the segment.
We do now the same study about the changes of trigradings for $k$-strings. 
We call $k$-string of~$\check{c}$ a connected component of~$\check{c} \cap (D_k \cup D_{k+1})$ 
together with the trigrading induced by the one of~$\check{c}$.
The set of $k$-strings of~$\check{c}$ are denoted~$st(\check{c},k)$.
A trigraded $k$-string is a $k$-string of~$\check{c}$ for some trigraded curve~$\check{c}$.

In Figures~\ref{fig:kstringk=2n-1}, \ref{fig:kstringk=1} and \ref{fig:kstringk=n}, we give the isotopy classes of bigraded $k$-strings. As before, are appart on Figures\ref{fig:kstringk=1} and \ref{fig:kstringk=n} the only types for which the values of the local indices differ in the case when $k=1,n$ from the generic case $k \neq1,n$.
\begin{center}
\begin{figure}[htbp]
  \psfrag{0}{$\scriptscriptstyle{0}$}
  \psfrag{k}{$\scriptscriptstyle{k}$}
  \psfrag{k+1}{$\scriptscriptstyle{k+1}$}
  \psfrag{c}{$\scriptscriptstyle{c}$}
  \psfrag{dk+1}{$\scriptscriptstyle{d_{k+1}}$}
  \psfrag{dk-1}{$\scriptscriptstyle{d_{k-1}}$}
  \psfrag{r1r2r3}{$\scriptscriptstyle{(r_1,r_2,r_3)}$}
  \psfrag{r1r2-1r3}{$\scriptscriptstyle{(r_1,r_2-1,r_3)}$}
  \psfrag{r1+1r2-1r3}{$\scriptscriptstyle{(r_1+1,r_2-1,r_3)}$}
  \psfrag{r1+2r2-2r3}{$\scriptscriptstyle{(r_1+2,r_2-2,r_3)}$}
  \psfrag{r1+2r2r3}{$\scriptscriptstyle{(r_1+2,r_2,r_3)}$}
  \psfrag{r1+2r2r3}{$\scriptscriptstyle{(r_1+2,r_2,r_3)}$}
  \psfrag{r1+3r2-2r3}{$\scriptscriptstyle{(r_1+3,r_2-2,r_3)}$}
 \psfrag{r1-3r2+2r3}{$\scriptscriptstyle{(r_1-3,r_2+2,r_3)}$}
  \psfrag{Type I_0(r_1,r_2,r_3)}{$\scriptscriptstyle{\text{Type } I_0 (r_1,r_2,r_3)}$}
  \psfrag{Type II_0(r_1,r_2,r_3)}{$\scriptscriptstyle{\text{Type } II_0 (r_1,r_2,r_3)}$}
  \psfrag{Type II'_0(r_1,r_2,r_3)}{$\scriptscriptstyle{\text{Type } II'_0 (r_1,r_2,r_3)}$}
  \psfrag{Type III_0(r_1,r_2,r_3)}{$\scriptscriptstyle{\text{Type } III_0 (r_1,r_2,r_3)}$}
  \psfrag{Type III'_0(r_1,r_2,r_3)}{$\scriptscriptstyle{\text{Type } III'_0 (r_1,r_2,r_3)}$}
  \psfrag{Type IV(r_1,r_2,r_3)}{$\scriptscriptstyle{\text{Type } IV (r_1,r_2,r_3)}$}
  \psfrag{Type IV'(r_1,r_2,r_3)}{$\scriptscriptstyle{\text{Type } IV' (r_1,r_2,r_3)}$}  
  \psfrag{Type V(r_1,r_2,r_3)}{$\scriptscriptstyle{\text{Type } V (r_1,r_2,r_3)}$}
  \psfrag{Type V'(r_1,r_2,r_3)}{$\scriptscriptstyle{\text{Type } V' (r_1,r_2,r_3)}$}
  \psfrag{Type VI(r_1,r_2,r_3)}{$\scriptscriptstyle{\text{Type } VI (r_1,r_2,r_3)}$}
\begin{tabular}{ccc}
  \includegraphics[height=3.5cm]{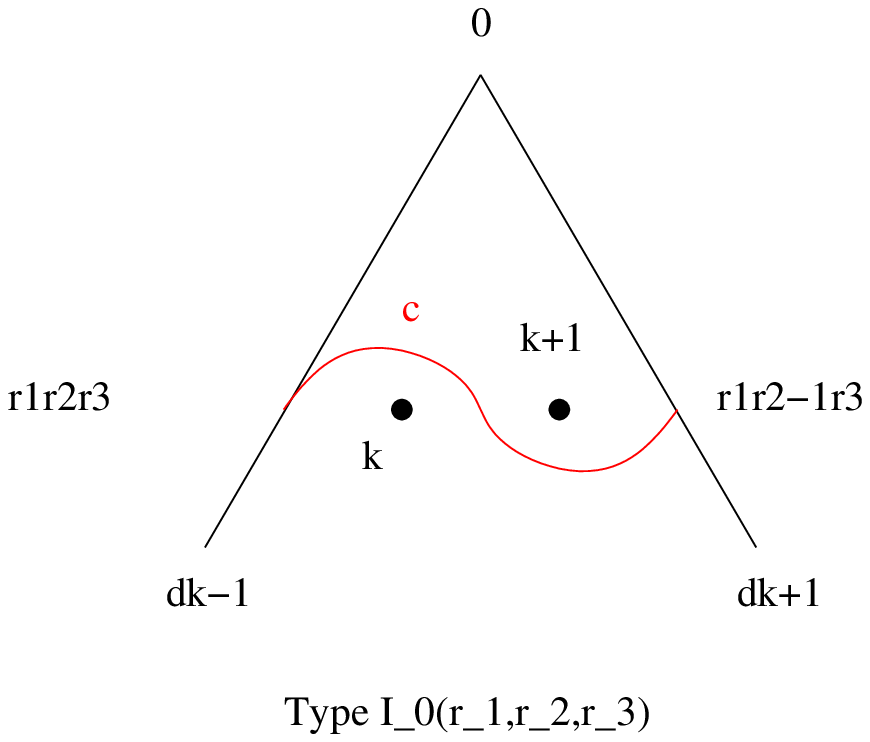}  & \hspace{0.5cm} &   
\includegraphics[height=3.5cm]{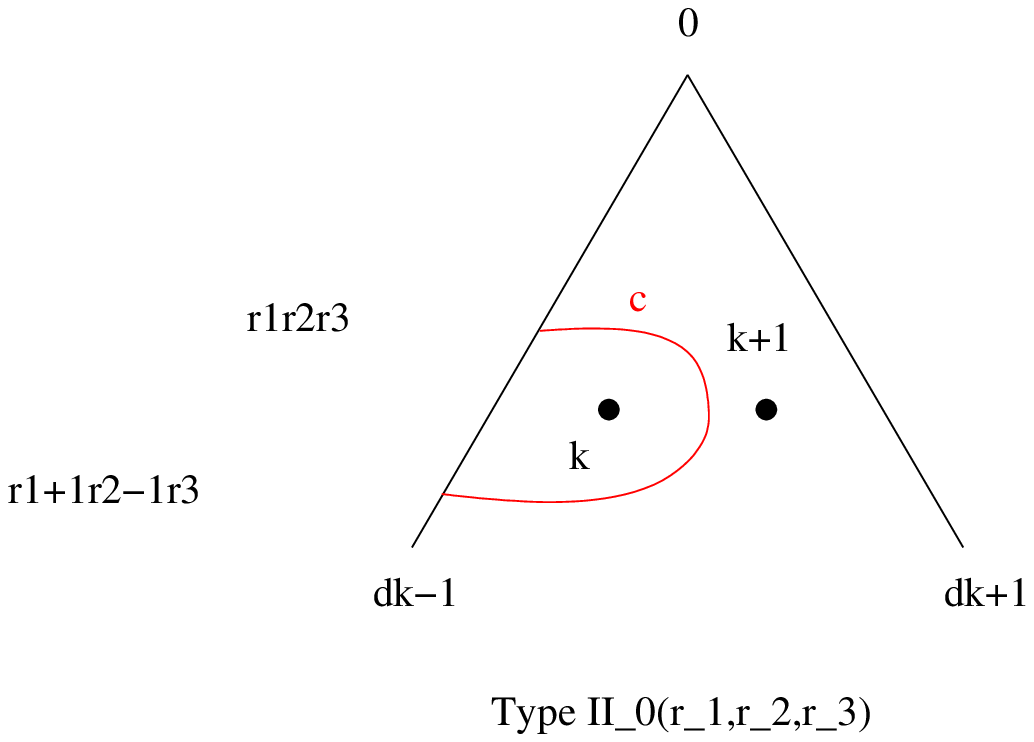} \\
 \includegraphics[height=3.5cm]{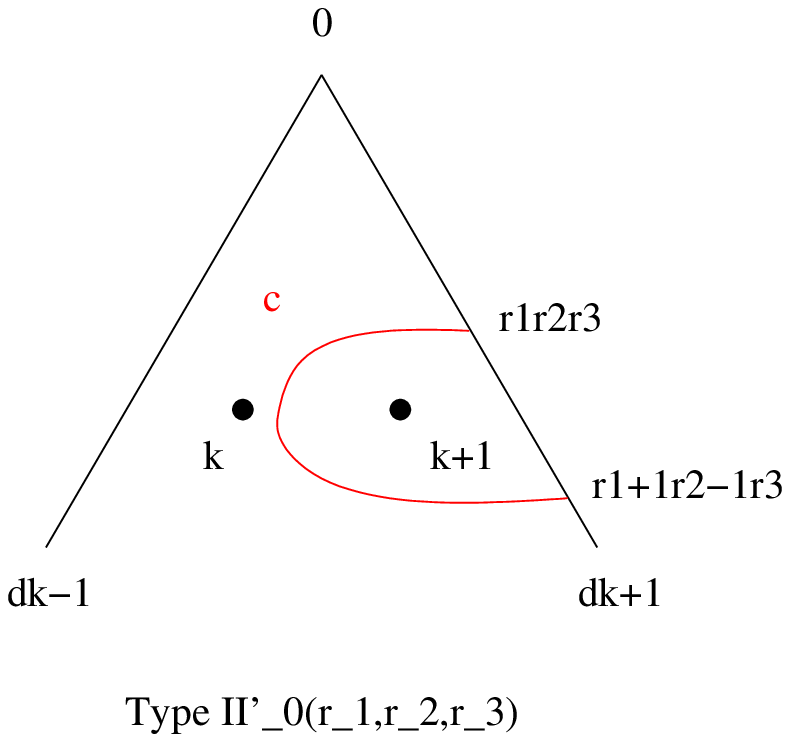}  & \hspace{0.5cm} &   \includegraphics[height=3.5cm]{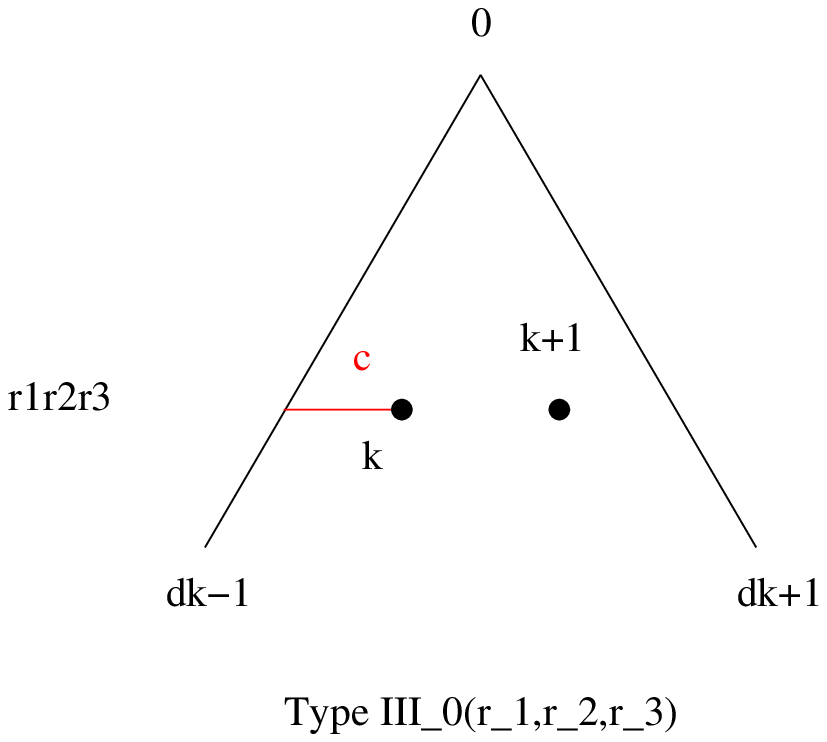} \\
\includegraphics[height=3.5cm]{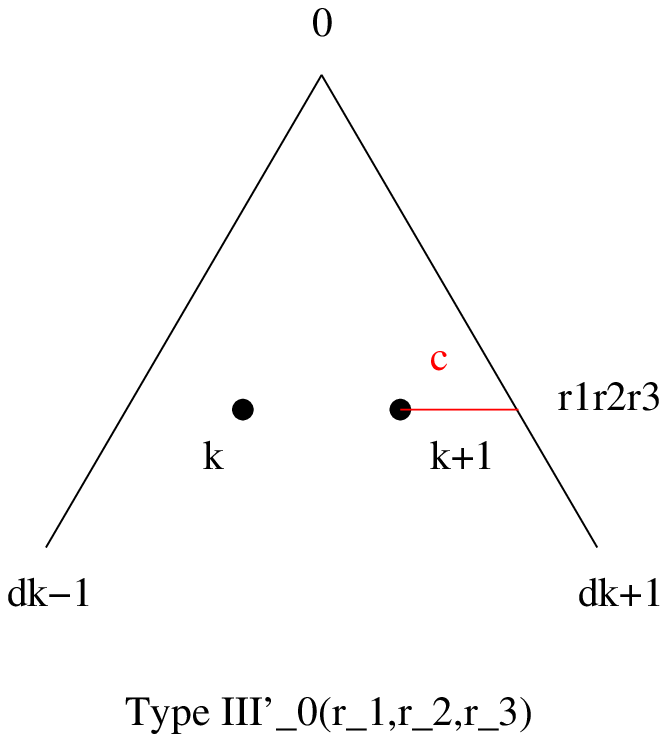}  & \hspace{0.5cm} &  
 \includegraphics[height=3.5cm]{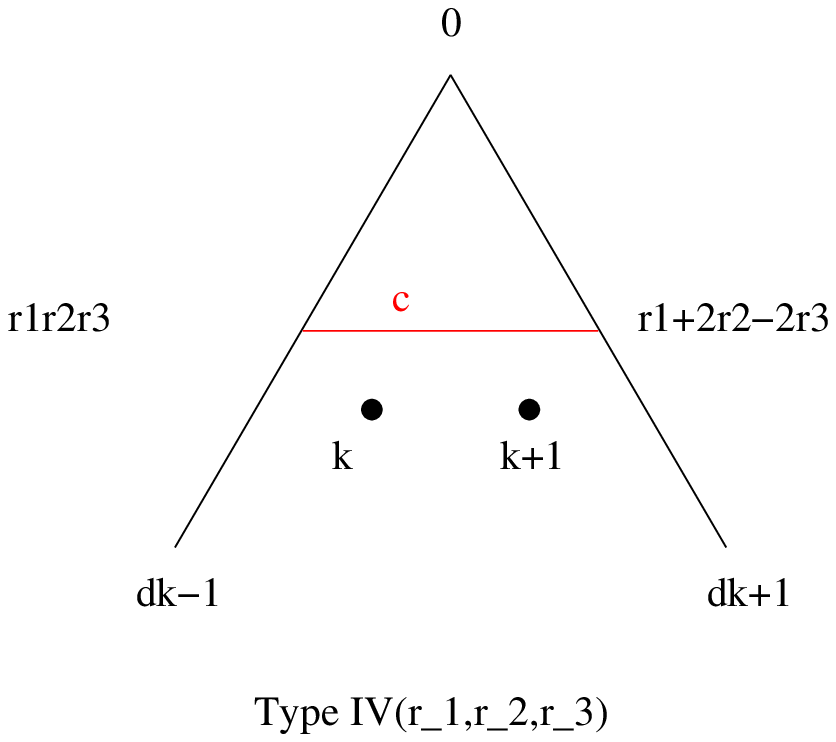} \\
\includegraphics[height=3.5cm]{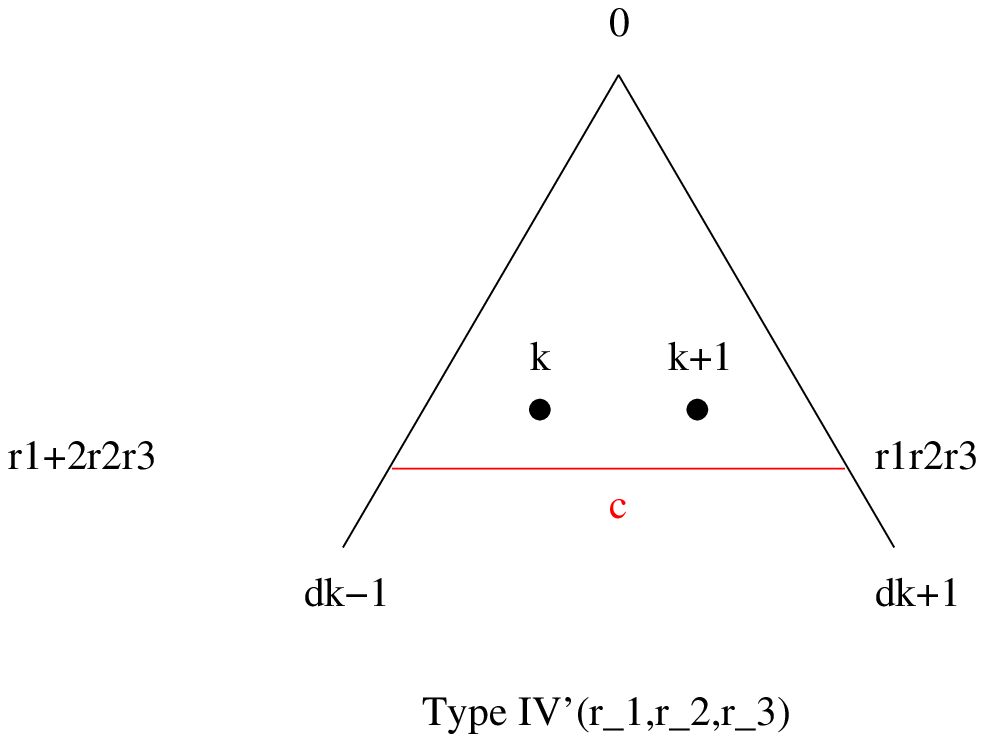}  & \hspace{0.5cm} &   
\includegraphics[height=3.5cm]{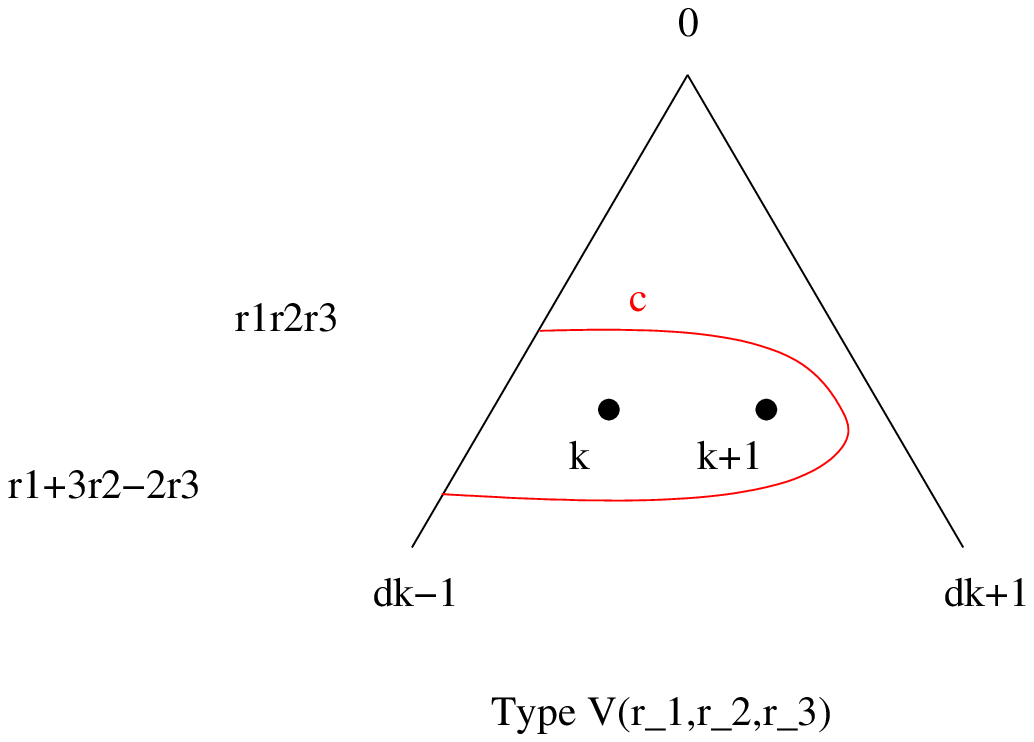} \\
\includegraphics[height=3.5cm]{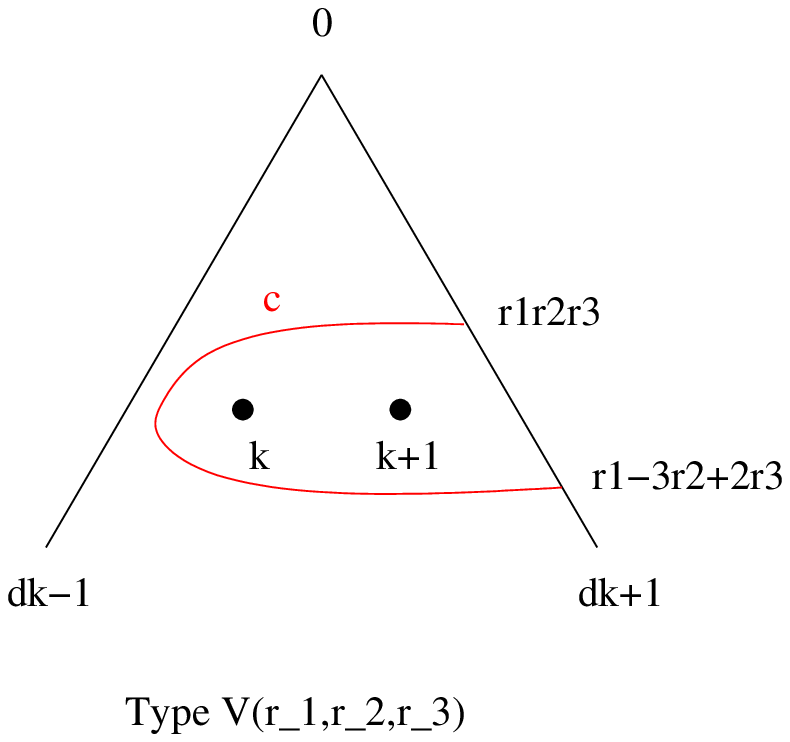}  & \hspace{0.5cm} &   
\includegraphics[height=3.5cm]{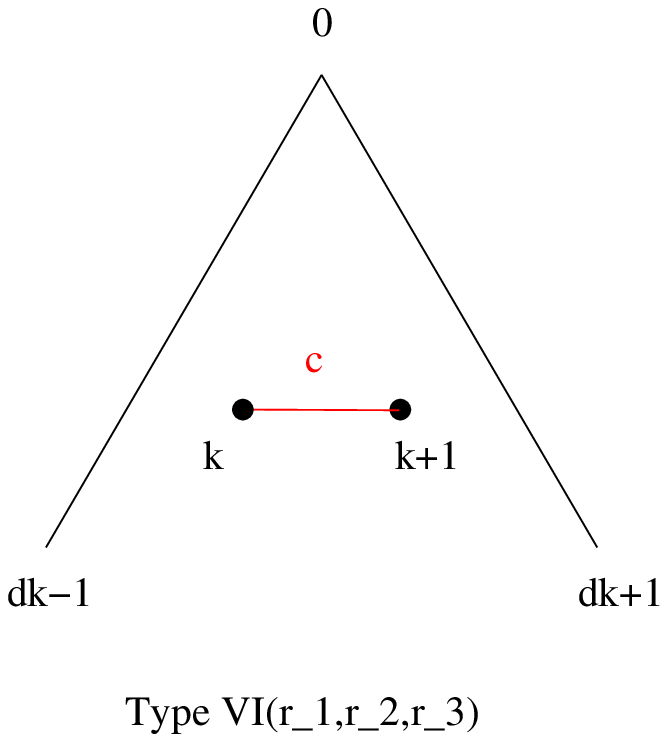} \\
\end{tabular}
   \caption{The isotopy classes of $k$-strings with local indices for $k=2, \ldots, n-1$.}
   \label{fig:kstringk=2n-1}
\end{figure}
  \end{center}

\begin{center}
\begin{figure}[htbp]
  \psfrag{0}{$\scriptscriptstyle{0}$}
  \psfrag{k}{$\scriptscriptstyle{1}$}
  \psfrag{k+1}{$\scriptscriptstyle{2}$}
  \psfrag{c}{$\scriptscriptstyle{c}$}
  \psfrag{dk+1}{$\scriptscriptstyle{d_{2}}$}
  \psfrag{dk-1}{$\scriptscriptstyle{d_{n}}$}
  \psfrag{r1r2r3}{$\scriptscriptstyle{(r_1,r_2,r_3)}$}
  \psfrag{r1r2-1r3}{$\scriptscriptstyle{(r_1-n,r_2+n-1,r_3+1)}$}
  \psfrag{r1+2r2-2r3}{$\scriptscriptstyle{(r_1-n+2,r_2+n-2,r_3+1)}$}
  \psfrag{r1+2r2r3}{$\scriptscriptstyle{(r_1+n+2,r_2-n,r_3-1)}$}
  \psfrag{Type I_0(r_1,r_2,r_3)}{$\scriptscriptstyle{\text{Type } I_0 (r_1,r_2,r_3)}$}
  \psfrag{Type IV(r_1,r_2,r_3)}{$\scriptscriptstyle{\text{Type } IV (r_1,r_2,r_3)}$}
  \psfrag{Type IV'(r_1,r_2,r_3)}{$\scriptscriptstyle{\text{Type } IV' (r_1,r_2,r_3)}$}  
\begin{tabular}{ccc}
  \includegraphics[height=3.5cm]{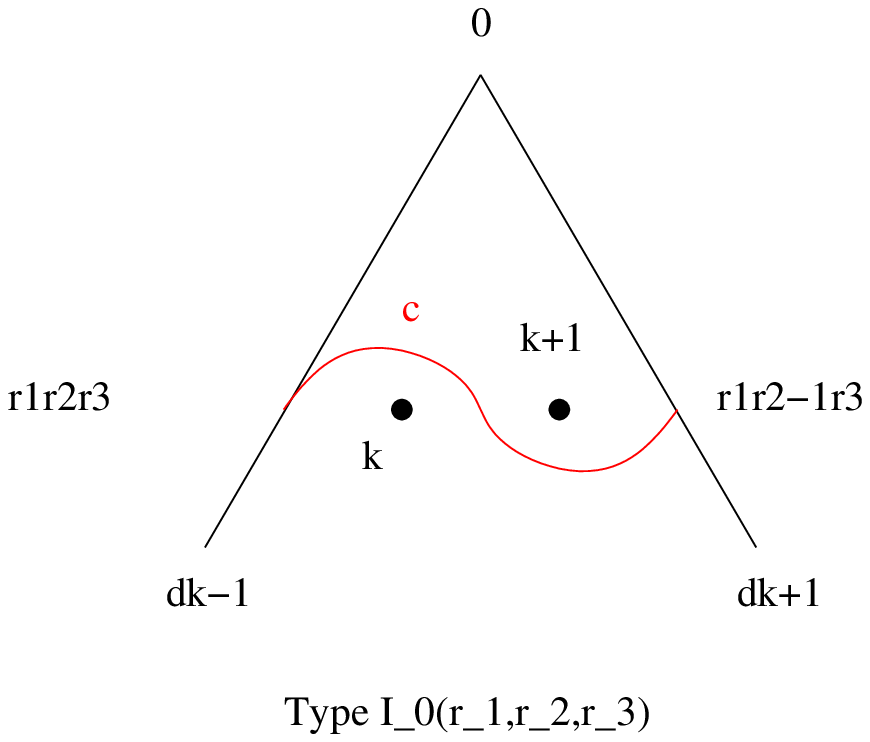}  & \hspace{0.5cm} &   
  \includegraphics[height=3.5cm]{Fig15k=2n-1IV.eps}  \\
  \includegraphics[height=3.5cm]{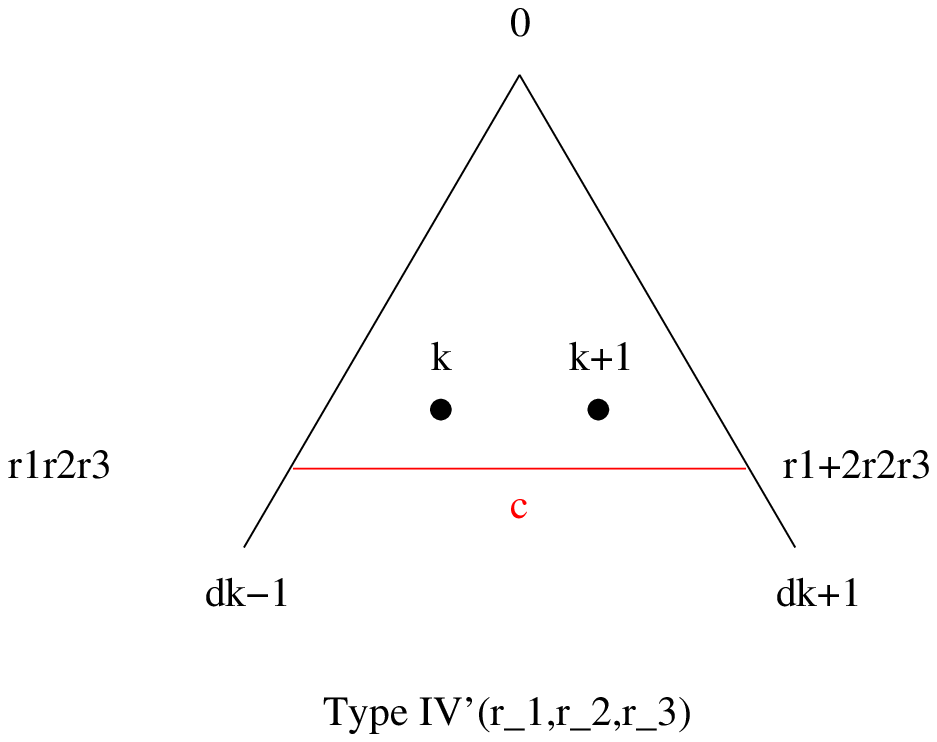}  
\end{tabular}
   \caption{The changes in the isotopy classes of $k$-strings with local indices when $k=1$.}
   \label{fig:kstringk=1}
\end{figure}
  \end{center}

\begin{center}
\begin{figure}[htbp]
  \psfrag{0}{$\scriptscriptstyle{0}$}
  \psfrag{k}{$\scriptscriptstyle{n}$}
  \psfrag{k+1}{$\scriptscriptstyle{1}$}
  \psfrag{c}{$\scriptscriptstyle{c}$}
  \psfrag{dk+1}{$\scriptscriptstyle{d_{1}}$}
  \psfrag{dk-1}{$\scriptscriptstyle{d_{n-1}}$}
  \psfrag{r1r2r3}{$\scriptscriptstyle{(r_1,r_2,r_3)}$}
  \psfrag{r1r2-1r3}{$\scriptscriptstyle{(r_1-n,r_2+n-1,r_3+1)}$}
  \psfrag{r1+2r2-2r3}{$\scriptscriptstyle{(r_1-n+2,r_2+n-2,r_3+1)}$}
  \psfrag{r1+2r2r3}{$\scriptscriptstyle{(r_1-n-2,r_2+n,r_3+1)}$}
  \psfrag{Type I_0(r_1,r_2,r_3)}{$\scriptscriptstyle{\text{Type } I_0 (r_1,r_2,r_3)}$}
  \psfrag{Type IV(r_1,r_2,r_3)}{$\scriptscriptstyle{\text{Type } IV (r_1,r_2,r_3)}$}
  \psfrag{Type IV'(r_1,r_2,r_3)}{$\scriptscriptstyle{\text{Type } IV' (r_1,r_2,r_3)}$}  
\begin{tabular}{ccc}
  \includegraphics[height=3.5cm]{Fig15k=1I0.eps}  & \hspace{0.5cm} &   
  \includegraphics[height=3.5cm]{Fig15k=2n-1IV.eps}  \\
  \includegraphics[height=3.5cm]{Fig15k=1IVprim.eps}  
\end{tabular}
   \caption{The changes in the isotopy classes of $k$-strings with local indices when $k=n$.}
   \label{fig:kstringk=n}
\end{figure}
  \end{center}

The trigraded intersection number of~$\check{b}_k$ with a trigraded curve can be computed 
thanks to the following lemma:

\begin{lem} \label{Lemma 3.20}
Let~$(c,\check{c})$ be a trigraded curve.
Then~$I^{\tri}(\check{b}_k,c)$ can be computed by adding up contributions from each trigraded 
$k$-string of~$\check{c}$.
For~$k=2, \ldots, n-1$ the contributions are:\\
%
\noindent
\begin{tabular}{|c|c|}
\hline
$I_0(0,0,0)$ & $1+q_1 q_2^{-1}$\\
\hline
$II_0(0,0,0)$ & $1+q_1 q_2^{-1}$ \\
\hline
$II'_0(0,0,0)$ & $q_1+q_2$  \\
\hline
$III_0(0,0,0)$ & $1$ \\
\hline
$III'_0(0,0,0)$ & $q_2$  \\
\hline
\end{tabular}
\hspace{1cm}
\begin{tabular}{|c|c|}
\hline
$IV(0,0,0)$ & $0$ \\
\hline
$IV'(0,0,0)$ & $0$ \\
\hline
$V(0,0,0) $ & $0$ \\
\hline
$V'(0,0,0)$ & $0$ \\
\hline
$VI(0,0,0)$ &  $1 + q_2$\\
\hline 
\end{tabular}

\noindent
For~$k=1$ the only contributions that differ are:\\
\begin{tabular}{|c|c|}
\hline
$I_0(0,0,0)$ & $(1+q_1 q_2^{-1})q_3^{-1}$\\
\hline
$II_0(0,0,0)$ & $(1+q_1 q_2^{-1})q_3^{-1}$\\
\hline
$III_0(0,0,0)$  & $q_3^{-1}$ \\
\hline
\end{tabular}

\noindent
For~$k=n$ the only contributions that differ are:\\
\begin{tabular}{|c|c|}
\hline
 $II'_0(0,0,0)$ & $(q_1 + q_2)q_3$\\
\hline
$III_0(0,0,0)$ & $q_2q_3$ \\
\hline
\end{tabular}

\noindent
and for~$u \neq 0$ the contribution of type~$X_u(r_1,r_2,r_3)$ is 
$q_1^{r_1 + n r_3}q_2^{r_1- n r_3} q_3^{-r_3} (q_1 q_2^{-1})^{u}$ times the 
contribution of~$X_0(0,0,0)$, with 
$$X \in \{I,II,II',III,III',IV,IV',V,V',VI\}.$$
\end{lem}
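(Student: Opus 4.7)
My plan is to adapt the strategy used by Khovanov and Seidel for the corresponding finite-type result to the trigraded affine setting in three main steps. The first step is locality: because $b_k$ is entirely contained in $D_k \cup D_{k+1}$, every intersection of $b_k$ with a normal-form representative $c'$ of $c$ is an intersection with exactly one $k$-string, so the defining sum for $I^{\tri}(\check{b}_k, \check{c})$ decomposes additively over the trigraded $k$-strings of $\check{c}$. It therefore suffices to compute the contribution of each isotopy class of trigraded $k$-string. For $k \in \{2, \ldots, n-1\}$ and each base type $X_0(0,0,0)$ with $X \in \{I, II, II', III, III', IV, IV', V, V', VI\}$, I would take the representative drawn in Figure~\ref{fig:kstringk=2n-1}, put it in minimal intersection with $b_k$, enumerate the intersection points, and read off the local index $\mu^{\tri}(\check{b}_k, \check{c}; z)$ at each point by propagating the prescribed indices of Figure~\ref{fig:6typesk=2n} along essential segments from the boundary crossings on $d_{k-1}$ and $d_{k+1}$, using the normalization $\mu^{\tri}(\check{d}_k, \check{b}_k; z) = (0,0,0)$. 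Substituting into the defining formula of $I^{\tri}$, with prefactor $(1+q_1^{-1}q_2)$ for interior crossings, $1$ at punctures of $\{1, \ldots, n\}$, and $\tfrac12(1+q_1^{-n}q_2^{n+1}q_3)$ at the central puncture $0$, produces the tabulated entries. For instance, a single interior crossing of local index $(1,-1,0)$ yields $1+q_1q_2^{-1}$ for types $I_0$ and $II_0$; two puncture crossings at $k$ and $k+1$ of indices $(1,0,0)$ and $(0,1,0)$ give $q_1 + q_2$ for type $II'_0$; and types $IV, IV', V, V'$ contribute zero because their $k$-strings can be isotoped away from $b_k$.

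For the shift in $(r_1, r_2, r_3)$, property (T3) of Lemma~\ref{proprieteT} directly supplies the overall factor $q_1^{r_1+nr_3}q_2^{r_2-nr_3}q_3^{-r_3}$. For the shift in $u$, the trigraded $k$-strings $X_u$ are by construction obtained from $X_0$ by iterating $\check{t}_{b_k}^{\pm 1}$. Combining (T2) with the identity $\check{t}_{b_k}(\check{b}_k) = \chi(-1,1,0)\check{b}_k$ from Lemma~\ref{lem3.12-3.14}(iii), together with (T3), one finds that each increment of $u$ multiplies the contribution by $q_1 q_2^{-1}$, which yields the factor $(q_1 q_2^{-1})^u$.

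Finally, the boundary cases $k=1$ and $k=n$ are handled by redoing the first step with the modified local indices in Figures~\ref{fig:6typesk=1}, \ref{fig:kstringk=1}, and \ref{fig:kstringk=n}. The third grading becomes active precisely when an essential segment crosses the divisor $d_n$, in agreement with the cocycle $C$ of Section~\ref{PTD} and with the prescribed value $\mu^{\tri}(\check{b}_1, \check{b}_n; z) = (-n, n, 1)$. The main obstacle is the bookkeeping of local indices across $d_n$: one must carefully propagate the shifts $(\pm n, \mp n, \pm 1)$ along essential segments to recover the tabulated $q_3^{\pm 1}$ factors in types $I_0, II_0, III_0$ for $k=1$ and in types $II'_0, III_0$ for $k=n$, without introducing any spurious overall shift. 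This step, while combinatorially delicate, is the only genuinely new ingredient compared with the finite-type analysis of Khovanov and Seidel.
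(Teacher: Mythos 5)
Your overall strategy --- decompose $I^{\tri}(\check{b}_k,\check{c})$ over the $k$-strings, compute each base type directly from the figures and the defining formula for $I^{\tri}$, then propagate to general types via (T2), (T3) and Lemma~\ref{lem3.12-3.14}(iii) --- is exactly the intended argument: the paper gives no explicit proof of this lemma and defers to the case-by-case computations of Khovanov--Seidel, adjusted for the third grading across $d_n$. Your sample evaluations (one interior crossing of index $(1,-1,0)$ giving $1+q_1q_2^{-1}$, two puncture crossings of indices $(1,0,0)$ and $(0,1,0)$ giving $q_1+q_2$, and the vanishing types) are consistent with the definition of $I^{\tri}$.

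There is, however, one concrete problem: the $u$-dependence. From the very tools you cite, each increment of $u$ multiplies the contribution by $q_1^{-1}q_2$, not by $q_1q_2^{-1}$. Writing $\check{g}_{u+1}=\check{t}_{b_k}(\check{g}_u)$ and applying (T2), one gets $I^{\tri}(\check{b}_k,\check{t}_{b_k}\check{g}_u)=I^{\tri}(\check{t}_{b_k}^{-1}\check{b}_k,\check{g}_u)=I^{\tri}(\chi(1,-1,0)\check{b}_k,\check{g}_u)$, and (T3) evaluates this as $q_1^{-1}q_2\,I^{\tri}(\check{b}_k,\check{g}_u)$; note that the boundary labels $(r_1,r_2,r_3)$ are unchanged under $\check{t}_{b_k}$ because the preferred lift is the identity over the region where $t_{b_k}$ is, so no compensating relabelling occurs. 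Hence your asserted factor $(q_1q_2^{-1})^u$ does not follow from your own argument, which actually yields $(q_1^{-1}q_2)^u$ --- the analogue of the factor in the bigraded lemma of Khovanov--Seidel. The printed statement appears to carry a sign typo in the $u$-exponent (it also reads $q_2^{r_1-nr_3}$ where $r_2$ is clearly intended), and you should either flag this discrepancy or rederive the factor rather than assert the printed one. A secondary, minor point: normal form means minimal intersection with the $d_i$, not with $b_k$, so your locality step needs the standard observation that a further isotopy supported in $D_k\cup D_{k+1}$ brings each $k$-string into minimal position with $b_k$ without disturbing the others or creating new crossings with the $d_i$.
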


\section{Admissible curves and complexes of projective modules}


In this section, we make the link between the categorical action and the geometry and topology of the trigraded intersection numbers. We use this link to prove the faithfulness of the categorical action defined in Section \ref{action}. In addition let us make a warranty here. In fact in Section \ref{action} we introduced a $\mathbb{Z}/2\mathbb{Z}$-grading (the reduction of the path length grading) on the category ${\cat}_n$ in order to recover exactly the homological representation defined in Section~\ref{rephomologique} at the level of the Grothendieck group. In all what follows, we do not need this grading and hence will work over the homotopy category of finitely generated bigraded projective modules that we denote ${\cat}'_n$. Note that the functors $\F_{\sigma}$ introduced in Section \ref{action} induce naturally endofunctors of ${\cat}'_n$, and we will therefore use the same notation.

\subsection{Complexes associated to admissible curves and categorical action}

Given an admissible curve $\check{c}$ in normal form, we associate an object $L(\check{c})$ in the category ${\cat}'_n$ of bounded complexes of projective bigraded modules over the quiver algebra ${R}_n$.
We define $L(\check{c})$ first as a trigraded ${R}_n$-module as follows:
$$L(\check{c})=\bigoplus_{z\in cr(\check{c})} P(z),$$
where $P(z)=P_{k(z)} [-\mu_1(z)-n\mu_3(z)]\{\mu_2(z)-n\mu_3(z)\}\left<-\mu_3(z)\right>$ with $[-]$ being a shift in the cohomological grading. We endow the previous trigraded module with a differential given by:
\begin{itemize}
\item If $z_0$ and $z_1$ are two boundary crossings of an essential segment then it follows that they differ in their $\mu_1$ grading by $1$. Suppose for instance that $\mu_1(z_1)=\mu_1(z_0)+1$. There are two possibilities:
\begin{itemize}
\item If $z_0$ and $z_1$ are both $k$ crossings then $\partial :P(z_0)\rightarrow P(z_1)$ is the right multiplication by the element $(k|k+1|k)$.
\item If $z_0$ (resp. $z_1$) is a $k_0$-crossing  (resp. a $k_1$-crossing) and we have $|k_0-k_1|=1$, then $\partial :P(z_0)\rightarrow P(z_1)$ is the right multiplication by the element $(k_0|k_1)$.
\end{itemize}
\item If $z_0$ and $z_1$ are not connected by an essential segment then there is no contribution of the differential between $P(z_0)$ and $P(z_1)$.
\end{itemize}

It can be directly checked that the previous map $\partial$ satisfies $\partial^2=0$ (it follows from the relations in the quiver algebra ${R}_n$) and in addition $\partial$ is of degree $(1,0,0)$. Hence we have the following lemma.

\begin{lem} For all admissible curves $\check{c}$ in normal form, $(L(\check{c}), \partial)$ is a trigraded differential module.
\end{lem}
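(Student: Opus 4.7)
The plan is to verify the two conditions directly: that $\partial$ has total tri-degree $(1,0,0)$ (raising the cohomological grading by one and preserving both internal gradings $\{-\}$ and $\langle - \rangle$ on $R_n$), and that $\partial \circ \partial = 0$. Both are local in the sense that they reduce to case-by-case inspection along each essential segment (for the degree claim) and at each interior crossing connecting two essential segments (for the nilpotency claim).

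For the degree, I would run through the types of essential segments classified in Figures~\ref{fig:6typesk=2n} and~\ref{fig:6typesk=1}. Fix an essential segment with boundary crossings $z_0$ and $z_1$ and set $k_i = k(z_i)$, $(\mu_1^i,\mu_2^i,\mu_3^i) = \mu^{\tri}(\check{d}_{k_i}, \check{c}; z_i)$. For $k \neq 1$ the figures show that $(\Delta\mu_1,\Delta\mu_2,\Delta\mu_3)$ along the segment equals $(\pm 1,\mp 1,0)$ with an accompanying change $\Delta k = 0$ or $\pm 1$ that matches exactly the intrinsic first- and third-grading of the element $(k|k+1|k)$ or $(k_0|k_1)$ by which $\partial$ right-multiplies. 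The one subtle case is $k=1$, where the jump carries a shift by $\pm n$ in $\mu_1,\mu_2$ and by $\pm 1$ in $\mu_3$; this is absorbed precisely by the combinations $\mu_1(z)+n\mu_3(z)$ (in the cohomological shift) and $\mu_2(z) - n\mu_3(z)$ (in the first internal shift) built into the definition of $P(z)$, combined with the fact that the generator $(n|1)$ carries third degree $+1$ and $(1|n)$ third degree $-1$. In every case the total degree of the right-multiplication is $(1,0,0)$, so $\partial$ has the claimed tri-degree.

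For $\partial^2 = 0$ the contribution from a summand $P(z_0)$ to $P(z_2)$ comes only from interior crossings $z_1$ lying on both an essential segment from $z_0$ and one to $z_2$. By the previous analysis, the resulting composition is right multiplication by a product of two of the generators $(k|k\pm 1|k)$, $(k|k\pm 1)$. Going through the types of essential segments that can meet at a fixed crossing, one finds that in every configuration the product is a path whose underlying sequence of vertices is of one of the forms $(i-1|i|i+1)$, $(i+1|i|i-1)$, or a length-three path in $R_n$ that can be rewritten using $(i|i+1|i)=(i|i-1|i)$ to contain one of these vanishing subpaths. Hence the product is zero in $R_n$ by the defining relations. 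The only real bookkeeping is the $k=1$ case, where one must verify that the cyclic identification $(n|1|2)=(n+1|1|0)\neq 0$ notation does not introduce new nonzero products: since the zero relations in $R_n$ are imposed cyclically in $i \bmod n$, no new paths escape the vanishing, and $\partial^2 = 0$ follows.

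The main obstacle, really only a bookkeeping one, is keeping the shifts straight across the ``cut'' $d_n \cup d_1$ where the monodromy of the cover $\check{P}$ is concentrated. The formulas for $P(z)$ are tailored to this cut, and once one accepts them, the verification is a finite check on the types of Figures~\ref{fig:6typesk=2n}, \ref{fig:6typesk=1} combined with the defining relations of the quiver algebra $R_n$.
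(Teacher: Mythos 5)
Your proposal is correct and follows the same route as the paper, which simply asserts that $\partial^2=0$ follows from the defining relations of $R_n$ and that $\partial$ has degree $(1,0,0)$, both "directly checked"; your write-up supplies the case analysis (segment types, the $k=1$ cut, and the fact that composites through a crossing always traverse adjacent sectors and hence yield paths containing $(i-1|i|i+1)$ or $(i+1|i|i-1)$ or of length $\geq 3$) that the paper leaves implicit.
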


\begin{rem}

An alternative way, using so-called folded diagrams, of presenting this complex of projective modules is given in \cite{KhS}.
\end{rem}

There is a free $\mathbb{Z}^3$-action on trigraded curves and also on the category ${\cat}_n$ (by shifts). The next lemma relates these two actions and can be directly checked from the construction of the differential module $L(\check{c})$.

\begin{lem}\label{Z3action}
For any triple $(r_1,r_2,r_3)$ of integers and any admissible trigraded curve $\check{c}$ we have :
$$L(\chi(r_1,r_2,r_3)\check{c})\cong L(\check{c})[-r_1-nr_3]\{r_2-nr_3\}\left<-r_3\right>.$$
\end{lem}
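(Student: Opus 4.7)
My plan is to reduce the lemma to a direct comparison of summands and differentials, using the fact that the $\Z^3$-action on trigraded curves only shifts the local indices by a fixed triple while leaving all combinatorial data (the crossings, their $k$-indices, and the essential-segment structure) untouched.

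First I would recall, as a preliminary step, how $\chi(r_1,r_2,r_3)$ acts on local indices. Since the covering $\check{P}\to P$ has deck group $\Z^3$ acting by $\chi$, lifting $\check{c}$ to $\chi(r_1,r_2,r_3)\check{c}$ does not change the underlying curve $c$ (so $cr(\check{c})$ and the function $k(z)$ are unchanged, and essential segments and their $k$-indices are the same), while at each crossing $z$ the local indices transform as
\begin{equation*}
\mu_i(\chi(r_1,r_2,r_3)\check{c};z) = \mu_i(\check{c};z) + r_i, \qquad i=1,2,3.
\end{equation*}
This can be read off directly from the definition of the local index via the lift $\check{\pi}$, or equivalently checked against property (T3) of Lemma~\ref{proprieteT} applied to an auxiliary $\check{c}_0$.

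Next, I would compare the two trigraded modules summand by summand. For each crossing $z$, the summand of $L(\chi(r_1,r_2,r_3)\check{c})$ at $z$ is
\begin{equation*}
P_{k(z)}\bigl[-\mu_1(z)-r_1-n(\mu_3(z)+r_3)\bigr]\{\mu_2(z)+r_2-n(\mu_3(z)+r_3)\}\bigl\langle -\mu_3(z)-r_3\bigr\rangle,
\end{equation*}
and regrouping the constant shifts $-r_1-nr_3$, $r_2-nr_3$, $-r_3$ shows this is exactly the summand $P(z)$ of $L(\check{c})$ shifted by $[-r_1-nr_3]\{r_2-nr_3\}\langle -r_3\rangle$. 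Taking the direct sum over $z\in cr(\check{c})$ yields the required isomorphism of trigraded modules.

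Finally, I would check that the differentials coincide under this identification. The differential on $L(\check{c})$ is nonzero only between summands $P(z_0)$ and $P(z_1)$ corresponding to two boundary crossings of an essential segment, and in that case it is the right-multiplication by $(k|k{\pm}1|k)$ or by $(k_0|k_1)$, determined entirely by the $k$-indices of $z_0,z_1$ and the (unit) difference in their $\mu_1$-coordinates. Under the action of $\chi(r_1,r_2,r_3)$ the set of essential segments is unchanged, the $k$-indices are unchanged, and the differences $\mu_1(z_1)-\mu_1(z_0)$ are also unchanged since both indices are shifted by $r_1$. Hence the same algebra elements appear as matrix entries of both differentials, and the identification of the underlying modules above intertwines them. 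I do not anticipate a serious obstacle here: the content of the lemma is that the trigrading shift on curves is compatible with the shift functors on $\cat_n$, which is essentially enforced by the way the exponents $-\mu_1(z)-n\mu_3(z)$, $\mu_2(z)-n\mu_3(z)$, $-\mu_3(z)$ were chosen in the definition of $P(z)$.
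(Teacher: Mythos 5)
Your argument is correct and is exactly the direct verification the paper has in mind (the paper only remarks that the lemma ``can be directly checked from the construction of the differential module $L(\check{c})$''): the deck transformation shifts every local index by $(r_1,r_2,r_3)$ while leaving the crossings, their $k$-indices, and the essential segments untouched, so the summands $P(z)$ acquire the uniform shift $[-r_1-nr_3]\{r_2-nr_3\}\left<-r_3\right>$ and the differential is unchanged. No gap to report.
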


\begin{rem}
\

\begin{itemize}
\item Notice that the minus sign in the first shift is here to match the standard convention for shifts on cohomology theories.
\item The previous lemma holds when one replaces admissible trigraded curves by $k$-strings.
\end{itemize}
\end{rem}

The aim of the next theorem is to relate the action by endofunctor of the extended affine type $A$ braid group  on $L(\check{c})$ and the complex associated to the image of the curve $\check{c}$ under the mapping class group action. This is done in order to be able to proceed to the Hom-space dimension computations of the next subsection. In addition, let us mention that the proof goes roughly as follows: the image of a complex $L(\check{c})$ under a composition of functors $\Fu_i$ is in general more complicated than the element associated to the image of $\check{c}$ by the element of the mapping class group corresponding to the sequence of functors, and the general procedure is to reduce the first one to the second one (by a sequence of deformation retracts and isomorphisms). 
\begin{thm}{\label{thm2action}}
For any admissible trigraded curve $\check{c}$, we have the following isomorphisms in ${\cat}'_n$:
$$ \Fu_i(L(\check{c})) \cong L(\check{t}_{b_i}(\check{c})) \mbox{ for all } 1\leq i\leq n.$$ and 
$$ \Fu_{\rho}(L(\check{c})) \cong L(\check{t}_{\partial}(\check{c})).$$
\end{thm}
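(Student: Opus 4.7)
My plan is to model the proof on \cite{KhS}, reducing the global statement to a local computation on $k$-strings and then invoking Proposition~\ref{Proposition3.17} together with Lemmas \ref{isostrig} and \ref{Z3action} to assemble the pieces. The key principle is that both operations --- applying $\Fu_i$ (resp.\ $\Fu_{\rho}$) to $L(\check{c})$, and replacing $\check{c}$ by $\check{t}_{b_i}(\check{c})$ (resp.\ $\check{t}_{\partial}(\check{c})$) --- are local outside of $D_k\cup D_{k+1}$ (resp.\ completely combinatorial), so it suffices to verify matching on each trigraded $k$-string, then re-glue.

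I would start with $\Fu_{\rho}$, which is essentially bookkeeping. Since $T_n^{\rho}\otimes_{R_n}P_k\cong P_{k+1}$ for $k<n$ and $T_n^{\rho}\otimes_{R_n}P_n\cong P_1\langle-1\rangle$ by Lemma~\ref{isostrig}, the underlying trigraded module of $\Fu_{\rho}(L(\check{c}))$ is obtained by relabelling each summand $P_{k(z)}$ of $L(\check{c})$ to $P_{k(z)+1}$, with a third-grading shift $\langle -1\rangle$ whenever $k(z)=n$. On the other hand, $\check{t}_{\partial}$ cyclically rotates the punctures and the curves $d_k$, so it induces a bijection between crossings of $\check{c}$ and those of $\check{t}_{\partial}(\check{c})$ which raises the $k$-index by one modulo $n$. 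Comparing with the change-of-local-indices rules in Figure~\ref{fig:6typesk=1} (the only wrap-around case) yields exactly the same shifts. Checking that the differential is preserved reduces to the observation that the automorphism $t_{\rho}$ of $R_n$ takes the generators $(k|k{+}1|k)$ and $(k_0|k_1)$ used in the definition of $\partial$ to their shifted versions, which matches the twisted right action on $T_n^{\rho}$.

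For $\Fu_i$, I would first decompose $L(\check{c})$ according to the stratification of $\check{c}$ by $i$-strings, $(i-1)$-strings, and segments disjoint from $D_i\cup D_{i+1}$. Since $P_i\otimes_{\mathbb{Z}}{}_iP\otimes_{R_n}P_k$ vanishes unless $k\in\{i-1,i,i+1\}$, the mapping cone $\Fu_i(L(\check{c}))$ only modifies $L(\check{c})$ in the summands indexed by crossings $z$ with $k(z)\in\{i-1,i,i+1\}$, i.e.\ exactly those belonging to an $i$-string. One then goes through the list of trigraded $i$-strings in Figures~\ref{fig:kstringk=2n-1}, \ref{fig:kstringk=1}, \ref{fig:kstringk=n} (types $I,II,II',III,III',IV,IV',V,V',VI$, plus the $u\ne 0$ shifts) and checks case by case that the resulting complex simplifies, via Gaussian elimination of acyclic summands (iterated application of the fact that $\mathrm{id}$-factors in mapping cones are contractible), to the complex associated to the $i$-string of type $u+1$ predicted by Proposition~\ref{Proposition3.17}(ii). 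This is the step where Khovanov-Seidel's explicit ``folded diagram'' bookkeeping is used; the computations are routine but lengthy.

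The main obstacle will be the wrap-around cases $i=1$ and $i=n$, where the local indices jump by $(\pm n,\mp n,\pm1)$ rather than $(\pm1,\mp1,0)$: one has to verify that the third-grading shifts built into $T_n^{\rho}$ and the special generators $(1|n)$, $(n|1)$ of $R_n$ produce exactly the corrections recorded in Figures~\ref{fig:6typesk=1} and \ref{fig:kstringk=n}. Once the $i$-string computation is done, gluing the local isomorphisms back to a global isomorphism in $\cat'_n$ is immediate from the $i\ne k$ part of Proposition~\ref{Proposition3.17}(ii), which guarantees that the differential between an $i$-string and its neighbours is unaffected by the transformation. Combining this with Lemma~\ref{Z3action} handles the $u\neq 0$ shifts automatically.
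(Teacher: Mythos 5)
Your proposal follows essentially the same route as the paper: the isomorphism for $\Fu_{\rho}$ is read off directly from Lemma~\ref{isostrig} and the compatibility of $t_{\rho}$ with the differential, while the isomorphism for $\Fu_i$ is reduced, as in Khovanov--Seidel, to a local case-by-case verification on trigraded $i$-strings followed by Gaussian elimination and regluing, with special attention to the wrap-around cases $i=1$ and $i=n$ where the third grading enters. That is exactly how the paper argues, and you correctly identify the two main points of care (the locality of the cone and the $q_3$-shifts at the seam).

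The one thing you miss, which the paper flags explicitly, is the case $n=3$. Your gluing step relies on the claim that the differential between an $i$-string and the rest of the complex is untouched, because distinct $i$-strings only communicate through segments lying outside $D_i\cup D_{i+1}$. When $n=3$ the complement of $D_i\cup D_{i+1}$ is a single sector $D_{i-1}=D_{i+2}$ whose two boundary arcs $d_{i+1}$ and $d_{i-1}$ both bound $D_i\cup D_{i+1}$, so a single essential segment of type $1$ or $1'$ can directly join two \emph{different} $i$-strings. This configuration does not occur in the finite type $A$ setting of \cite{KhS} and requires an extra verification (the analogue of their Lemma~4.5) that the Gaussian eliminations performed on the two $i$-strings are compatible with the connecting differential. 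For $n>3$ your argument goes through as written.
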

\begin{proof}
For the first case, the proof is exactly similar to the one given by Khovanov and Seidel \cite{KhS}, if $n>3$. In the case $n=3$, one needs to check an additional case in our version of Lemma 4.5 in \cite{KhS}, which is the one where two different $k$-strings have endpoints connected by an essential segment of type $1$ or $1'$. One also needs to take carefully care of the cases $i=1$ and $i=n$, where the third grading comes into play. The second isomorphism is completely immediate and follows from the definition of $\Fu_{\rho}$ and its action on the projectives $P_k$.
\end{proof}

\begin{cor}
For any admissible trigraded curve $\check{c}$ and any element $\sigma \in \hat{\B}_{\hat{A}_{n-1}}$ we have :
$$ \Fu_{\sigma}(L(\check{c})) \cong L(\check{\sigma}(\check{c})).$$
\end{cor}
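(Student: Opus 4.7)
The plan is a straightforward induction on the length $\ell(\sigma)$ of a word representing $\sigma$ in the generators $\sigma_1^{\pm 1},\ldots,\sigma_n^{\pm 1},\rho^{\pm 1}$ of $\hat{\B}_{\hat{A}_{n-1}}$, leveraging Theorem~\ref{thm2action} as the base case and the fact that $\Fu_\sigma$ is defined as a composition of functors.

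First I would observe that although $L$ was defined for curves in normal form, Lemma~\ref{lemma3.15} together with the construction of $L$ show that its isomorphism class depends only on the isotopy class of $\check{c}$; hence it suffices to establish the isomorphism up to choosing a normal form representative on the right-hand side. The case $\ell(\sigma)=0$ is the tautology $L(\check c)\cong L(\check c)$. For $\ell(\sigma)=1$ with $\sigma$ a positive generator $\sigma_i$ or $\rho$, the statement is exactly Theorem~\ref{thm2action}. For the inverse generators, I would combine Theorem~\ref{thm2action} with the fact (recorded in Remark~\ref{catetc} and Lemma~\ref{isostrig}) that $\Fu'_i$ is quasi-inverse to $\Fu_i$ in $\cat'_n$ and $\Fu'_\rho$ is quasi-inverse to $\Fu_\rho$, while the lifted diffeomorphisms $\check t_{b_i}^{-1}$ and $\check t_{\partial}^{-1}$ are inverse to $\check t_{b_i}$ and $\check t_{\partial}$ on trigraded isotopy classes (Proposition~\ref{Prop3.19}). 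Applying $\Fu'_i$ to both sides of $\Fu_i(L(\check t_{b_i}^{-1}(\check c)))\cong L(\check c)$ yields $L(\check t_{b_i}^{-1}(\check c))\cong \Fu'_i(L(\check c))$, and similarly for $\rho$.

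For the inductive step, write $\sigma=g\,\sigma'$ with $g$ a single generator or its inverse and $\ell(\sigma')=\ell(\sigma)-1$. Unwinding the definition in the proposition of Section~\ref{action}, $\Fu_\sigma = \Fu_g\circ \Fu_{\sigma'}$ as endofunctors of $\cat'_n$. The inductive hypothesis gives $\Fu_{\sigma'}(L(\check c))\cong L(\check{\sigma}'(\check c))$ in $\cat'_n$, and since $\check{\sigma}'(\check c)$ is again an admissible trigraded curve (mapping classes permute admissible curves, and the lifted action permutes their trigradings), the length-one case applied to $\check{\sigma}'(\check c)$ and $g$ yields
\[
\Fu_g\bigl(L(\check{\sigma}'(\check c))\bigr)\cong L\bigl(\check g(\check{\sigma}'(\check c))\bigr)=L(\check{\sigma}(\check c)).
\]
Composing the two isomorphisms closes the induction.

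The only substantive point to check is that the induction is independent of the chosen word representing $\sigma$, but this is automatic from the two independent well-definedness statements already in the paper: the weak action of $\hat{\B}_{\hat{A}_{n-1}}$ on $\cat_n$ (hence on $\cat'_n$) ensures $\Fu_\sigma$ is well-defined up to isomorphism, and Proposition~\ref{Prop3.19} ensures $\check\sigma(\check c)$ is well-defined up to trigraded isotopy; these feed through $L$ as isomorphisms of complexes. No delicate obstacle arises beyond invoking these two facts.
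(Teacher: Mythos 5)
Your proposal is correct and follows essentially the argument the paper intends: the corollary is stated as an immediate consequence of Theorem~\ref{thm2action}, obtained by induction on word length, using the quasi-invertibility of the functors for inverse generators and the well-definedness of both the weak categorical action and the lifted mapping class action to make the induction independent of the chosen word. Your added care about $L$ depending only on the trigraded isotopy class (via Lemma~\ref{lemma3.15}) is a point the paper leaves implicit but is handled correctly.
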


\subsection{Graded dimensions of Hom-spaces and faithfulness of the categorical action}

The aim of this subsection is to prove that the category ${\cat}'_n$ encodes the intersection numbers. It will be proved by showing that the Poincar\'e polynomial of the space of morphisms between two objects $L(\check{c})$  and $L(\check{c}')$ is equal to the trigraded intersection number between $\check{c}$ and $\check{c}'$.


\begin{lem}
For any $k=1,\dots,n$ and for any trigraded $k$-string $\check{g}$, the abelian group
$$Hom_{{\cat}'_n}({P}_k,L(\check{g})[s_1]\{-s_2\}\left<-s_3\right>)$$
is free for all $(s_1,s_2,s_3)\in\mathbb{Z}^3$. The Poincar\'e polynomial
$$\sum_{(s_1,s_2,s_3)\in \mathbb{Z}^3}\mbox{rk}\Hom_{{\cat}'_n}({P}_k,L(\check{g})[s_1]\{-s_2\}\left<-s_3\right>)q_1^{s_1}q_2^{s_2}q_3^{s_3}$$
is equal to the trigraded intersection number $I^{\tri} (\check{b_k}, \check{g})$.
\end{lem}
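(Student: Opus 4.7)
The plan is to reduce the lemma, by the $\mathbb{Z}^3$-equivariance of $L(-)$, to a finite case-by-case verification indexed by the isotopy types of base trigraded $k$-strings, and in each such case to explicitly match both sides of the equality. Applying Lemma~\ref{Z3action} to $k$-strings, the shift by $\chi(r_1,r_2,r_3)$ replaces $L(\check{g})$ by $L(\check{g})[-r_1-nr_3]\{r_2-nr_3\}\langle -r_3\rangle$. Under this translation, the Poincar\'e polynomial of the Hom-space is multiplied by $q_1^{r_1+nr_3}q_2^{r_2-nr_3}q_3^{-r_3}$, which is exactly the multiplier appearing in the last sentence of Lemma~\ref{Lemma 3.20}. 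Thus it suffices to check the identity for the base types $I_0(0,0,0), II_0(0,0,0), II'_0(0,0,0), III_0(0,0,0), III'_0(0,0,0), IV(0,0,0), IV'(0,0,0), V(0,0,0), V'(0,0,0)$ and $VI(0,0,0)$.

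For each base type, I would write $L(\check{g})$ explicitly using the recipe that constructs the complex from the crossings and essential segments. The resulting object is a complex of length at most one, of the form $P_{k_0}\{a_0\}\langle c_0\rangle \to P_{k_1}\{a_1\}\langle c_1\rangle$ (with appropriate cohomological placement), where the indices $k_j$ and shifts $a_j, c_j$ are read off the boundary crossings via the formula $P(z)=P_{k(z)}[-\mu_1(z)-n\mu_3(z)]\{\mu_2(z)-n\mu_3(z)\}\langle -\mu_3(z)\rangle$, and the differential is right multiplication by the relevant length-one path of the quiver. Applying $\Hom_{\cat'_n}(P_k,-)$ turns this into a two-term complex of free abelian groups, since $\Hom_{R_n}(P_k, P_j)$ has an explicit basis given by the paths from $k$ to $j$ in $R_n$, namely at most a length-$0$, length-$1$ and length-$2$ path (each contributing a monomial $q_2^\ell$ with the correction $q_3^{\mp 1}$ coming precisely from the two arrows $(1|n)$ and $(n|1)$ of nonzero third degree). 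Freeness of the Hom-space as an abelian group in $\cat'_n$ follows because, in each case, the differential of this two-term complex is either zero or a split injection onto a direct summand, which can be verified directly.

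The last step is to compare each computed Poincar\'e polynomial with the corresponding entry of Lemma~\ref{Lemma 3.20}. Two points will require the most care. First, for the types $IV, IV', V, V'$, whose geometric contribution is zero, the induced differential on the Hom-complex must be an isomorphism of free graded abelian groups, killing the Hom-space in the homotopy category; this is exactly where the quiver relations $(i-1|i|i+1)=(i+1|i|i-1)=0$ and $(i|i+1|i)=(i|i-1|i)$ come into play. Second, the boundary cases $k=1$ and $k=n$ must be treated separately: as encoded in Figures~\ref{fig:6typesk=1}, \ref{fig:kstringk=1} and \ref{fig:kstringk=n}, crossing the arc $d_n$ or $d_1$ and the essential segments running between the punctures $n$ and $1$ shift the third grading by $\pm 1$, and one must verify that these shifts reproduce the modified contributions $(1+q_1q_2^{-1})q_3^{-1}$, $q_3^{-1}$, $(q_1+q_2)q_3$ and $q_2 q_3$ listed in Lemma~\ref{Lemma 3.20}. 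This matching with the third grading, absent from the Khovanov--Seidel bigraded setting and coming from the special arrows of the cyclic quiver $\Gamma_n$, is the main new obstacle.
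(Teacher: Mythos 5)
Your reduction step is incomplete, and this is a genuine gap rather than a presentational one. The isotopy classes of trigraded $k$-strings form families $X_u(r_1,r_2,r_3)$ indexed both by the triple $(r_1,r_2,r_3)\in\mathbb{Z}^3$ recording the trigrading and by an integer $u$ recording the isotopy type of the underlying ungraded string (the $(u+1)$-type being obtained from the $u$-type by applying $t_{b_k}$). The $\mathbb{Z}^3$-equivariance of $L(-)$ (Lemma~\ref{Z3action}) together with property (T3) of Lemma~\ref{proprieteT} only normalizes $(r_1,r_2,r_3)$ to $(0,0,0)$; it does not move $X_u$ to $X_0$, since these are genuinely different curves in $D_k\cup D_{k+1}$. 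After your reduction you are therefore still left with infinitely many cases $I_u, II_u, II'_u, III_u, III'_u$ for all $u\in\mathbb{Z}$, and correspondingly your claimed multiplier misses the factor $(q_1q_2^{-1})^{u}$ appearing in the last sentence of Lemma~\ref{Lemma 3.20}. The paper closes this gap with a second reduction: since $\Fu_k$ is an autoequivalence of ${\cat}'_n$ with $\Fu_k(L(\check{g}))\cong L(\check{t}_{b_k}(\check{g}))$ (Theorem~\ref{thm2action}), and $\check{t}_{b_k}(\check{b}_k)=\chi(-1,1,0)\check{b}_k$ (Lemma~\ref{lem3.12-3.14}(iii)), one gets an isomorphism between $\Hom_{{\cat}'_n}(P_k,L(\check{g})[s_1]\{-s_2\}\left<-s_3\right>)$ and the corresponding space for $\check{t}_{b_k}(\check{g})$ up to a shift producing exactly the factor $q_1q_2^{-1}$, which matches property (T2) on the intersection-number side; this reduces $X_u$ to $X_0$ for the families $I,II,II',III,III'$. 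You need this step (or an explicit induction on $u$) before your case-by-case verification becomes finite.

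Apart from this, your plan for the base cases --- explicit two-term complexes read off the crossings, Hom-bases given by paths of length at most $2$ with the $q_3^{\mp1}$ corrections from the arrows $(1|n)$ and $(n|1)$, the quiver relations forcing acyclicity for types $IV, IV', V, V'$, and the separate bookkeeping of the third grading for $k=1$ and $k=n$ --- is exactly the direct computation the paper performs in its final step.
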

\begin{proof}
The proof follows the same line as the one given by Khovanov-Seidel and is in three steps. First, Lemma \ref{Z3action} and property (T3) of the trigraded intersection numbers imply that one can restrict to the case of $k$-strings $\check{g}$ whose left or right endpoint is of degree $(0,0,0)$. Secondly, Theorem \ref{thm2action}  and property (T2) of trigraded intersection numbers imply that one can restrict, for the cases $I$, $II$ and $III$, to the $\check{g}$-strings depicted in Figure \ref{fig:kstringk=2n-1}. Thirdly, a direct computation of the Poincar\'e polynomial in the six different cases of Figure \ref{fig:kstringk=2n-1}, Figure \ref{fig:kstringk=1} and Figure \ref{fig:kstringk=n} and a comparison with the trigraded intersection numbers computed in Lemma \ref{Lemma 3.18} end the proof.
\end{proof}

\begin{lem}
For any trigraded admissible curve in normal form $\check{c}$  and any $k=1,\ldots,n$, we have the following isomorphism:
$$Hom_{{\cat}'_n}({P}_k,L(\check{c})[s_1]\{-s_2\}\left<-s_3\right>)\cong\bigoplus_{\check{g}\in st(\check{c},k)}Hom_{{\cat}'_n}({P}_k,L(\check{g})[s_1]\{-s_2\}\left<-s_3\right>), $$
for all $(s_1,s_2,s_3)\in \mathbb{Z}^3$.
\end{lem}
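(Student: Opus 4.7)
The plan is to prove the isomorphism already at the level of the underlying $\Hom$-complexes in the category of bounded complexes of $R_n$-modules, and then take $H^0$ to pass to the homotopy category $\cat'_n$. Since the three shifts $[s_1]\{-s_2\}\langle -s_3\rangle$ commute with direct sums and with $\Hom_{R_n}(P_k,-)$, it suffices to treat the case $s_1=s_2=s_3=0$: the shifts can be reinserted on both sides at the end.

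The first key ingredient is the standard isomorphism $\Hom_{R_n}(P_k, M) \cong (k) M$, given by $\varphi \mapsto \varphi((k))$. Because of the relations in $R_n$, in particular $(k|k\pm 1|k\pm 2)=0$, the idempotent $(k)$ annihilates $P_j$ unless $j \in \{k-1, k, k+1\}$. Consequently, in the summand decomposition $L(\check{c}) = \bigoplus_{z \in cr(\check{c})} P(z)$, only crossings $z$ lying on the barriers $d_{k-1} \cup d_k \cup d_{k+1}$ contribute to the $\Hom$-complex after applying $(k)\cdot$. These crossings are precisely the ones constituting the $k$-strings of $\check{c}$: each of them is either an interior point (if on $d_k$) or a boundary point (if on $d_{k-1}$ or $d_{k+1}$) of a unique connected component of $c \cap (D_k \cup D_{k+1})$, so the correspondence $\{z \in cr(\check{c}) : k(z) \in \{k-1,k,k+1\}\} = \bigsqcup_{\check g \in st(\check c, k)} cr(\check g)$ is a genuine partition. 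This yields the desired decomposition of the underlying graded abelian group:
$$(k)L(\check{c}) \;\cong\; \bigoplus_{\check{g}\in st(\check{c},k)} (k) L(\check{g}).$$

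The main point I expect to be the crucial (though mild) obstacle is verifying compatibility with the differential. The differential of $L(\check{c})$ is a sum of contributions arising from essential segments of $\check{c}$: a segment contained in a sector $D_j$ connects summands indexed by a $(j-1)$-crossing and a $j$-crossing via right multiplication by a path of length one or two. For such a contribution to survive after applying $(k)\cdot$, both endpoint summands must be non-annihilated by $(k)$, which forces $\{j-1,j\} \subset \{k-1,k,k+1\}$, i.e.\ $j \in \{k, k+1\}$. But essential segments in $D_k \cup D_{k+1}$ are, by the very definition of a $k$-string, contained in a single $\check{g} \in st(\check{c}, k)$. Hence any potential cross-string contribution to the differential must traverse a sector $D_j$ with $j \notin \{k, k+1\}$, and is therefore automatically killed by $(k)\cdot$. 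The restricted differential on $(k)L(\check{c})$ thus splits as a direct sum over $k$-strings, producing an isomorphism of complexes of abelian groups. Taking $H^0$, reinserting the shifts, and using that $\Hom_{\cat'_n}(X,Y)=H^0(\Hom^{\bullet}_{R_n}(X,Y))$ for $X$ a single projective in degree zero, yields the claim.
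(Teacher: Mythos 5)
Your overall strategy --- identifying $\Hom_{{\cat}'_n}({P}_k,L(\check{c})[s_1]\{-s_2\}\left<-s_3\right>)$ with the cohomology of the complex $(k)L(\check{c})$ and showing that this complex splits as a direct sum over $k$-strings --- is sound and is essentially what the paper's (very terse) proof has in mind; your decomposition of the underlying graded group is correct. But your verification that the differential splits has a gap. You assert that every essential segment in a sector $D_j$ ``connects summands indexed by a $(j-1)$-crossing and a $j$-crossing'' and deduce that survival of both endpoint summands forces $j\in\{k,k+1\}$. This covers only segments of type $1$ and $1'$. An essential segment of type $2$ or $2'$ has \emph{both} endpoints on the same barrier $d_m$, and its differential contribution is right multiplication by the length-two path $(m|m+1|m)$; for such a segment the survival condition is only $m\in\{k-1,k,k+1\}$. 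In particular, a type $2$/$2'$ segment lying in $D_{k-1}$ with both endpoints on $d_{k-1}$, or in $D_{k+2}$ with both endpoints on $d_{k+1}$, lies outside $D_k\cup D_{k+1}$ --- so it contributes no differential to any $L(\check{g})$ --- and yet both of its endpoint summands $P_{k\mp1}$ survive the application of $(k)\cdot$. Your ``automatically killed'' conclusion therefore does not apply to these components, and if they did survive, $(k)L(\check{c})$ would not split.

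The missing step is a short computation that does go through, but only thanks to the defining relations of $R_n$: the offending component is right multiplication by $(k-1|k|k-1)=(k-1|k-2|k-1)$ on $(k)R_n(k-1)=\Z\cdot(k|k-1)$, and the product $(k|k-1)(k-1|k-2|k-1)$ contains the subpath $(k|k-1|k-2)$, which vanishes; symmetrically on the other side one uses $(k|k+1|k+2)=0$. This is precisely the ingredient the paper's proof singles out when it invokes ``the fact that paths containing a subpath of the form $(i-1|i|i+1)$ or $(i+1|i|i-1)$ are zero in the algebra $R_n$'' to justify that a morphism into a single $k$-string extends to a morphism into $L(\check{c})$. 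Once you add this case, your argument is complete and coincides with the paper's.
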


\begin{proof}
The result follows from two facts: the first is that the space of morphisms between ${P_i}$ and ${P_j}[s_1]\{-s_2\}\left<-s_3\right>$ is trivial if $|i-j|>1$ . The second that any morphism on the right can be extended to a morphism on the left. The latter property follows from the definition of the complex $L(\check{c})$ and the fact that paths containing a subpath of the form $(i-1|i|i+1)$ or $(i+1|i|i-1)$ are zero in the algebra $R_n$. 
\end{proof}

Similarly it follows immediately from the local properties of the trigraded intersection numbers that 
$$I^{\tri} (\check{b_k}, \check{c})=\sum_{\check{g}\in st(\check{c},k)}I^{\tri} (\check{b_k}, \check{g}),$$
see Lemma \ref{Lemma 3.20}. In addition, since the categorical action respects by definition the space of morphisms and similarly the extended affine braid group action respects the trigraded intersection numbers (property (T2)), we have the following proposition:

\begin{prop} For any $\tau$ and $\sigma$ in $\hat{\B}_{\hat{A}_{n-1}}$, and any $k,l=1,\ldots,n$ we have that the Poincar\'e polynomial of 
$$Hom_{{\cat}'_n}(\Fu_{\tau}({P}_k), \Fu_{\sigma}({P}_l))$$
is equal to the trigraded intersection number $I^{\tri}(\check{\tau}(\check{b_k}),\check{\sigma}(\check{b_l}))$.
\end{prop}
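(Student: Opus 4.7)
The plan is to reduce the computation to the already treated case of morphisms \emph{from} a basic projective $P_k$ \emph{into} the complex associated to an admissible curve, and then appeal to the mapping-class-invariance of the trigraded intersection number. The key structural input is that each endofunctor $\Fu_\tau$ is an auto-equivalence of $\cat'_n$: by construction $\Fu_\sigma$ is defined as the iterated tensor product of the complexes $F_i, F'_i, F_\rho, F'_\rho$, the pairs $(F_i,F'_i)$ and $(F_\rho,F'_\rho)$ are mutually inverse in the homotopy category (the former via Khovanov--Seidel and the latter via Lemma \ref{isostrig}), and therefore $\Fu_\tau$ admits the inverse $\Fu_{\tau^{-1}}$ for every $\tau\in\hat{\B}_{\hat{A}_{n-1}}$.

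First I would identify $P_k = L(\check{b}_k)$ for the distinguished trigraded lift $\check{b}_k$ used throughout Section~4: since $b_k$ has a single crossing with $d_k$ landing in the puncture $k$ (no essential segments), the complex $L(\check{b}_k)$ reduces to $P_k$ in cohomological degree zero. The Corollary of Theorem~\ref{thm2action} then gives
\begin{equation*}
\Fu_\tau(P_k) \;\cong\; L\bigl(\check{\tau}(\check{b}_k)\bigr),\qquad \Fu_\sigma(P_l) \;\cong\; L\bigl(\check{\sigma}(\check{b}_l)\bigr).
\end{equation*}

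Next, using that $\Fu_\tau$ is an equivalence, slide it across the Hom:
\begin{equation*}
\Hom_{\cat'_n}\bigl(\Fu_\tau(P_k),\Fu_\sigma(P_l)\bigr) \;\cong\; \Hom_{\cat'_n}\bigl(P_k,\Fu_{\tau^{-1}\sigma}(P_l)\bigr) \;\cong\; \Hom_{\cat'_n}\bigl(P_k,L(\check{c})\bigr),
\end{equation*}
where $\check{c}:=\check{\tau^{-1}\sigma}(\check{b}_l)$ is an admissible trigraded curve. Bring $c$ into normal form by an isotopy; by Lemma~\ref{lem3.12-3.14}(ii) this lifts uniquely to a trigrading isotopic to $\check{c}$, so the complex $L(\check{c})$ is unchanged up to isomorphism in $\cat'_n$. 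Then apply the second lemma of this subsection to decompose $\Hom(P_k,L(\check{c}))$ as the direct sum indexed by $k$-strings of $\check{c}$, and the first lemma to identify each summand's Poincar\'e polynomial with the trigraded intersection number $I^{\tri}(\check{b}_k,\check{g})$ of the corresponding $k$-string. Combined with the local-to-global additivity $I^{\tri}(\check{b}_k,\check{c})=\sum_{\check{g}\in st(\check{c},k)}I^{\tri}(\check{b}_k,\check{g})$ recorded from Lemma~\ref{Lemma 3.20}, the Poincar\'e polynomial of $\Hom(P_k,L(\check{c}))$ equals $I^{\tri}(\check{b}_k,\check{c})$.

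Finally, invoke the invariance property (T2) of Lemma~\ref{proprieteT} with the diffeomorphism representing $\tau$:
\begin{equation*}
I^{\tri}\bigl(\check{b}_k,\check{\tau^{-1}\sigma}(\check{b}_l)\bigr) \;=\; I^{\tri}\bigl(\check{\tau}(\check{b}_k),\check{\tau}\,\check{\tau^{-1}\sigma}(\check{b}_l)\bigr) \;=\; I^{\tri}\bigl(\check{\tau}(\check{b}_k),\check{\sigma}(\check{b}_l)\bigr),
\end{equation*}
which is the desired equality. The only delicate point is keeping the three gradings in perfect alignment across the two sides of the isomorphism $\Fu_\tau(P_k)\cong L(\check{\tau}(\check{b}_k))$: the composition of the preferred lifts $\check{t}_{b_i}$ and $\check{t}_\partial$ must agree with the preferred lift $\check{\tau}$ of the composite braid (both act trivially on fibres over $\partial\D$, so this is forced), and the $\Z^3$-action on trigraded curves must correspond under $L$ to the three shifts $[-],\{-\},\langle-\rangle$ on complexes, which is precisely Lemma~\ref{Z3action}. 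Once that bookkeeping is in place, the four steps above combine immediately to yield the proposition.
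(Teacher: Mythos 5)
Your proposal is correct and follows essentially the same route as the paper: reduce to $\Hom_{\cat'_n}(P_k, L(\check{\tau^{-1}\sigma}(\check{b}_l)))$ using that the weak braid group action makes each $\Fu_\tau$ an equivalence, compute that Hom-space via the two preceding lemmas (decomposition into $k$-strings plus the $k$-string computation) together with the additivity $I^{\tri}(\check{b}_k,\check{c})=\sum_{\check{g}}I^{\tri}(\check{b}_k,\check{g})$, and conclude by the invariance property (T2). The paper states this very tersely in the sentence preceding the proposition; your version just makes the bookkeeping (normal form, preferred lifts, Lemma~\ref{Z3action}) explicit.
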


We now state the main theorem of this section.

\begin{thm}
If $\Fu_{\sigma}$ acts on the category ${\cat}_n$ as the identity functor, then $\sigma$ is the unit of $\hat{\B}_{\hat{A}_{n-1}}$.
\end{thm}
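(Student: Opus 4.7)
The plan is to leverage the proposition immediately preceding the theorem, which identifies Poincar\'e polynomials of Hom-spaces with trigraded intersection numbers, together with Lemma \ref{geomintnb2} constraining mapping classes that preserve all pairwise intersection numbers with the reference arcs $b_i$.

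First I would reduce the hypothesis to a statement about intersections. Since the extra $\Z/2\Z$ grading on $\cat_n$ is only a refinement of what is seen in $\cat'_n$, if $\Fu_\sigma$ is isomorphic to the identity endofunctor on $\cat_n$ then so is it on $\cat'_n$. In particular, for every $k,l \in \{1,\dots,n\}$ and every triple $(s_1,s_2,s_3)\in\Z^3$, one has a natural isomorphism
\[
\Hom_{\cat'_n}\bigl(\Fu_\sigma(P_k),P_l[s_1]\{-s_2\}\langle-s_3\rangle\bigr)\;\cong\;\Hom_{\cat'_n}\bigl(P_k,P_l[s_1]\{-s_2\}\langle-s_3\rangle\bigr).
\]
Taking Poincar\'e polynomials and applying the proposition just established, this yields the equality
\[
I^{\tri}(\check{\sigma}(\check{b}_k),\check{b}_l)=I^{\tri}(\check{b}_k,\check{b}_l)
\]
for all $k,l$, where $\check{\sigma}$ is the preferred lift of a diffeomorphism representing $\sigma$.

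Next I would specialize $q_1=q_2=q_3=1$. By property (T1) of Lemma \ref{proprieteT}, this specialization recovers twice the geometric intersection number, so $I(\sigma(b_k),b_l)=I(b_k,b_l)$ for all $k,l$. At this point Lemma \ref{geomintnb2} applies verbatim and forces $\sigma=\rho^{pn}$ for some integer $p\in\Z$, i.e.\ $\sigma$ lies in the center $\langle\rho^n\rangle$ of $\hat{\B}_{\hat{A}_{n-1}}$.

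The main obstacle is then to rule out the nontrivial central elements; this is where the third grading is essential and is the whole reason for introducing it. Using Lemma \ref{isostrig} iteratively, the functor $\Fu_\rho$ cyclically permutes the projectives $P_1,\dots,P_n$ with a single $\langle -1\rangle$-shift occurring when $P_n$ wraps around to $P_1$. Hence
\[
\Fu_{\rho^{n}}(P_i)\;\cong\;P_i\langle -1\rangle\quad\text{and}\quad \Fu_{\rho^{pn}}(P_i)\;\cong\;P_i\langle -p\rangle
\]
for every $i$ and every $p\in\Z$. If $\Fu_\sigma=\Fu_{\rho^{pn}}$ is isomorphic to the identity functor then $P_i\cong P_i\langle -p\rangle$ as trigraded $R_n$-modules, which (looking at the third-graded piece supported in any single degree, or equivalently at the class $s^{-p}[P_i]=[P_i]$ in the Grothendieck group $\Z[t^{\pm1},s^{\pm1}]\otimes\Z^n$) forces $p=0$. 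Therefore $\sigma=1$ in $\hat{\B}_{\hat{A}_{n-1}}$, proving faithfulness.
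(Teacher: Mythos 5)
Your proposal is correct and follows essentially the same route as the paper's own proof: reduce to $\cat'_n$, use the preceding proposition and property (T1) to deduce preservation of geometric intersection numbers, invoke Lemma \ref{geomintnb2} to conclude $\sigma=\rho^{pn}$, and finally identify $\Fu_{\rho^{pn}}$ with the shift $\left<-p\right>$ to force $p=0$. The only difference is that you spell out the last step (via Lemma \ref{isostrig} and the Grothendieck group) in more detail than the paper does.
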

\begin{proof}
If $\Fu_{\sigma}$ acts on the category ${\cat}_n$ as the identity functor, so does it on ${\cat}'_n$, then it follows from the previous proposition that $\check{\sigma}$ preserves the trigraded intersection numbers of admissible curves. It implies, together with property $(T1)$ of Lemma~\ref{proprieteT}, that, for all $k,l=1,\ldots,n$, we have $I(b_k,\sigma(b_l))=I(b_k,b_l)$. Then, by Lemma \ref{geomintnb2}, one knows that  $\sigma$ is hence of the form $\rho^{np}$ for some integer $p$. Now one can immediately check that $\Fu_{\rho^{np}}$ is then the shift functor $<-p>$, which acts as the identity functor if and only if $p=0$ and this finishes the proof.
\end{proof}

\paragraph*{\bf Acknowledgements}
\ 
\newline
A.G. was partially supported by CMUP (UID/MAT/00144/2013), which is funded by FCT (Portugal) with national (MEC) and European structural funds through the programs FEDER, under the partnership agreement PT2020.  A.-L.T. is grateful to Institut Mittag-Leffler for the warm hospitality and the support during the final stages of this project. She thanks also warmly Volodymyr Mazorchuk for useful discussions about this project. E.W is partially supported by the French ANR research project VasKho ANR-11-JS01-00201. He thanks Christian Blanchet and Vincent Florens for many discussions of topological nature. The authors acknowledge IMB, CMUP and Uppsala Universitet for providing nice working enviromnents that have made the completion of this project possible.

\bibliographystyle{alpha}
\bibliography{biblioMC}
\vspace{0.1in}

\noindent A.G.: { \sl \small Centro de Matem\'atica da Universidade do Porto
Departamentos de Matem\'atica, 4169-007 Porto, Portugal} 
\newline \noindent {\tt \small email: agnes.gadbled@fc.up.pt}

\noindent A.-L.T.: { \sl \small Matematiska Institutionen, Uppsala Universitet, 75106 Uppsala, Sverige; Mittag-Leffler Institute, 18260 Djursholm, Sverige} 
\newline \noindent {\tt \small email: anne-laure.thiel@math.uu.se}

\noindent E.W.: { \sl \small Institut Math\'ematique de Bourgogne, UMR 5584, Universit\'e de Bourgogne, 21078 Dijon, France} 
\newline \noindent {\tt \small email: emmanuel.wagner@u-bourgogne.fr}

\end{document}